\newtheorem{theorem}{Theorem}
\newtheorem{proposition}{Proposition}
\newtheorem{lemma}{Lemma}
\theoremstyle{definition}
\newtheorem{remark}{Remark}
\newcommand{\bdm}{\begin{displaymath}}
\newcommand{\edm}{\end{displaymath}}
\newcommand{\bq}{\begin{equation}}
\newcommand{\eq}{\end{equation}}
\newcommand{\bqn}{\begin{equation*}}
\newcommand{\eqn}{\end{equation*}}
\newcommand{\eps}{\varepsilon}
\newcommand{\phw}{\tilde \psi^{wk}}
\newcommand{\Sing}{\mathrm{Sing}\,}
\newcommand{\Reg}{\mathrm{Reg}\,}
\newcommand{\norm}[1]{\left\| #1 \right\|}
\newcommand{\mklm}[1]{\left\{ #1 \right\}}
\renewcommand{\d}{\,d}
\newcommand{\N}{{\mathbb N}}
\newcommand{\R}{{\mathbb R}}
\newcommand{\T}{{\rm T}}
\renewcommand{\epsilon}{\varepsilon}
\renewcommand{\phi}{\varphi}
\renewcommand{\rho}{\varrho}
\newcommand{\Cinft}{{\rm C^{\infty}}}
\newcommand{\CT}{{\rm C^{\infty}_c}}
\newcommand{\g}{{\bf \mathfrak g}}
\newcommand{\id}{\mathrm{id}\,}
\renewcommand{\det}{\mathrm{det}\,}
\newcommand{\vol}{\text{vol}\,}
\newcommand{\Crit}{\mathrm{Crit}}
\DeclareMathOperator{\supp}{supp}
\DeclareMathOperator{\tr}{tr}
\DeclareMathOperator{\gd}{\partial}
\newcommand{\e}[1]{\,{\mathrm e}^{#1}\,}
\begin{document}

\author{Pablo Ramacher}
\title[Singular equivariant asymptotics and the moment map II]{Singular equivariant asymptotics and the moment map II } 
\address{Pablo Ramacher, Georg-August-Universit\"at G\"ottingen, Institut f\"ur Mathematik, Bunsenstr. 3-5, 37073 G\"ottingen, Germany}
\email{ramacher@uni-math.gwdg.de}
\thanks{This research was financed by the grant RA 1370/2-1 of the German Research Foundation (DFG)}

\maketitle

\setcounter{tocdepth}{1}
\tableofcontents

\section{Introduction}

This is the second of a series of papers dealing with the asymptotic behavior of certain integrals occuring in the description of the spectrum of an invariant elliptic operator on a compact Riemannian manifold $M$ carrying the action of a compact, connected Lie group of  isometries $G$ \cite{bruening83,ramacher08,cassanas-ramacher}, and in the study of its equivariant cohomology via the moment map  $\mathbb{J}:T^\ast M \rightarrow \g^\ast$, where $T^\ast M$ and $\g$ denote the cotangent bundle of $M$ and the Lie algebra of $G$, respectively \cite{duistermaat-heckman, atiyah-bott84, witten92, berline-getzler-vergne}.  The mentioned integrals are essentially of the form
\bqn 
I(\mu)=\int_{T^\ast M\times \g} e^{i\mathbb{J}(\eta)(X)/\mu} a(\eta,X) \d \eta \d X,  \qquad \mu \to 0^+,
\eqn
where $a \in \CT(T^\ast M \times \g)$ is an amplitude, $d\eta$  a density on $T^\ast M$, and $dX$, up to a constant factor, the Lebesgue measure in $\g$. While asymptotics for $I(\mu)$ have been obtained for free group actions, one meets with serious difficulties when singular orbits are present. The reason is that, when trying to examine these integrals via the generalized stationary phase theorem in the case of general effective actions, the critical set of the phase function $\mathbb{J}(\eta)(X)$ is no longer a smooth manifold, so that, a priori, the principle of the stationary phase can not be applied in this case. Nevertheless, in what follows, we shall show how to circumvent this obstacle by partially resolving the singularities of the critical set of $\mathbb{J}(\eta)(X)$, and in this way obtain asymptotics for $I(\mu)$   with  remainder estimates  in the case of singular group actions. Similar asymptotics were already obtained in \cite{ramacher09} for orthogonal actions in Euclidean space, and the present paper globalizes those results, while applications will be treated in a forthcoming paper.

\section{Compact group actions and the moment map}

Let $M$ be a closed, connected Riemannian manifold, and $G$  a compact, connected Lie group with Lie algebra $\g$ acting on $M$ by isometries.   Consider the cotangent bundle  $\pi:T^\ast M\rightarrow M$, as well as the tangent bundle $\tau: T(T^\ast M)\rightarrow T^\ast M$, and define on $T^\ast M$ the Liouville form 
\bqn 
\Theta(\mathfrak{X})=\tau(\mathfrak{X})[\pi_\ast(\mathfrak{X})], \qquad \mathfrak{X} \in T(T^\ast M).
\eqn
We regard $T^\ast M$ as a  symplectic manifold with symplectic form 
\bqn 
\omega= d\Theta,
\eqn
and define for every $X \in \g$ the function
\bqn
J_X: T^\ast M \longrightarrow \R, \quad \eta \mapsto \Theta(\tilde{X})(\eta),
\eqn
where $\tilde X$ denotes the fundamental vector field on $T^\ast M$, respectively $M$,  generated by an element $X$ of $\g$. Note that  $ \Theta(\tilde {X})(\eta)=\eta(\tilde{X}_{\pi(\eta)})$. Indeed, put $\gamma(s)=\e{-s X} \cdot \eta$, $s \in ( -\epsilon, \epsilon)$ for some $\epsilon >0$, so that  $\gamma(0)=\eta$, $\dot {\gamma}(0) =\tilde X_\eta$. Since $\pi(\e{-sX} \cdot \eta)=\e{-sX} \cdot \pi(\eta)$, one computes
\bqn 
\pi_\ast(\tilde X_\eta)=\frac d {ds} \pi \circ \gamma(s)_{|s=0} = \frac d {ds} \e{-sX} \cdot \pi(\eta)_{|s=0} = \tilde X_{\pi(\eta)}.
\eqn
Therefore
$$ \Theta(\tilde {X})(\eta)=\tau(\tilde X_\eta)(\pi_\ast(\tilde X_\eta))=\eta(\tilde{X}_{\pi(\eta)}),$$
as asserted. The function $J_X$ is linear in $X$, and due to the invariance of the Liouville form
\bqn
\mathcal{L}_{\tilde X} \Theta = dJ_X+ \iota_{\tilde X} \omega =0, \qquad \forall X \in \g,
\eqn
where $\mathcal{L}_{\mathfrak X}$ denotes the Lie derivative. This means that  $G$ acts on $\T^\ast M$ in a Hamiltonian way. The corresponding symplectic moment map is  then given by 
\bqn
\mathbb{J}:T^\ast M\to \g^\ast,  \quad \mathbb{J}(\eta)(X)=J_X(\eta).
\eqn
 We are interested in the asymptotic behavior of integrals of the form 
\bq
\label{int}
I(\mu)=   \int _{T^\ast M}  \int_{\g} e^{i \psi( \eta,X)/\mu }   a(\eta,X)  \d X \d \eta , \qquad \mu \to 0^+,  
\eq 
where $a \in \CT(T^\ast M \times \g)$ is an amplitude, $d\eta$ a density on $T^\ast M$, and $dX$, up to a constant factor, the Lebesgue measure in $\g$, while
\bqn 
\psi(\eta,X) = \mathbb{J}(\eta)(X).
\eqn 
We would like to  study these integrals by means of the generalized stationary phase theorem, and for this we have to consider the critical set of the phase function $\psi(\eta,X)$. Let $\mklm{X_1, \dots, X_d}$ be a basis of $\g$, and write $X=\sum s_i X_i$. Due to the linear dependence of $J_X$ in $X$, 
\bqn 
\gd_{s_i} \psi (\eta,X) =J_{X_i}(\eta),
\eqn
and because of  the non-degeneracy of $\omega$, 
\bqn 
 J_{X, \ast}=0 \quad \Longleftrightarrow \quad dJ_X=-\iota_{\tilde X} \omega=0 \quad \Longleftrightarrow \quad \tilde X =0.
\eqn
Thus we see that 
 \begin{align*}
 \Crit(\psi)&=\mklm{ (\eta,X) \in {T^\ast M} \times \g: \psi_\ast  (\eta,X) =0}=\mklm{(\eta,X)\in \Omega \times \g: \tilde X_\eta=0 },
\end{align*}
    where
\bqn
  \Omega=\mathbb{J}^{-1}(0)
\eqn
represents the zero level of the moment map. Note that 
\bq
\label{eq:Ann}
\eta \in \Omega \quad \Longleftrightarrow \quad \eta _m \in \mathrm{Ann}(T_m (G\cdot m)) \quad \forall m \in M,
\eq
where $\mathrm{Ann} \, (V_m) \subset T_m^\ast M$ denotes the annihilator of a vector subspace $V_m \subset T_mM$. Now, the major difficulty in applying the generalized stationary phase theorem in our setting  stems from the fact that, due to the singular orbit structure of the underlying group action,  the zero level $\Omega$ of the moment map, and, consequently, the considered critical set $\Crit(\psi)$, are in general singular varieties. In fact,  if the $G$-action on $T^\ast M$ is not free, the considered moment map is no longer a submersion, so that $\Omega$ and the symplectic quotient $\Omega/G$ are no longer smooth. Nevertheless, it can be shown that these spaces have Whitney stratifications into smooth submanifolds, see Lerman-Sjamaar \cite{lerman-sjamaar}, and 
Ortega-Ratiu \cite{ortega-ratiu}, Theorems 8.3.1 and 8.3.2, which correspond to the stratifications of $T^\ast M$, and $M$ by orbit types, see Duistermaat-Kolk \cite{duistermaat-kolk}. In particular, if $(H_L)$ denotes the principal isotropy type of the $G$-action in $M$, 
$\Omega$ has a principal stratum given by
\bq
\label{eq:x}
\mathrm{Reg} \, \Omega =\mklm{ \eta \in \Omega: G_\eta \sim H_L},
\eq
where $G_\eta$ denotes the isotropy group of $\eta \in T^\ast M$. To see this, let $\eta \in \Omega$, and  $m=\pi(\eta)$ be such that $G_m \sim H_L$. In view of \eqref{eq:Ann} one computes  for $g \in G_m$, and  $\mathfrak{X} =\mathfrak{X}_T + \mathfrak{X}_N \in T_mM =T_m(G\cdot m) \oplus N_m (G \cdot m)$
 \bqn 
g \cdot \eta_m(\mathfrak{X})  =  \eta_m \big ( (L_{g^{-1}})_{\ast,m} ( \mathfrak{X}_N))= \eta_m(\mathfrak{X}),
\eqn 
since $G_m$ acts trivially on $N_m(G\cdot m)$, see Bredon \cite{bredon}, pages 308 and 181. But $G_\eta \subset G_{\pi(\eta)}$ for arbitrary $\eta$,  so that we conclude  
\bq
\label{eq:y}
\eta \in \Omega, \quad G_{\pi(\eta)} \sim H_L \quad \Rightarrow  \quad G_\eta =G_{\pi(\eta)}, 
\eq
and the assertion follows. Note  that the stratum $\mathrm{Reg} \,\Omega$ is an open and dense subset of $\Omega$,  and a smooth submanifold in $T^\ast M $ of codimension equal to the dimension $\kappa$ of a principal $G$-orbit in $M$. Since the Lie algebra of $G_\eta$ is given by $\g_\eta=\{X\in \g: \tilde X_\eta=0\}$, it is clear that the smooth part of $\Crit(\psi)$ corresponds to 
\bq
\label{eq:z}
\mathrm{Reg}\,  \Crit(\psi)=\mklm{(\eta,X)\in \mathrm{Reg}\, \Omega \times \g:  X\in \g_\eta},
\eq
and constitutes a submanifold of codimension $2\kappa$.
To obtain an asymptotic description of $I(\mu)$, we shall partially resolve the singularities of $\Crit(\psi)$, for which we will need a suitable $G$-invariant covering of $M$. In its construction, we shall follow Kawakubo \cite{kawakubo}, Theorem 4.20.  Thus, let $(H_1), \dots (H_L)$ denote the isotropy types of $M$, and  arrange them in such a way that 
\bqn
H_j \text{ is conjugate to a subgroup of }H_i  \quad \Rightarrow \quad i \leq j.
\eqn
Let $H\subset G$ be a closed subgroup, and $M(H)$ the union of all orbits of type $G/H$. Then $M$ has a stratification into orbit types according to  
\bqn
M=M(H_1) \cup \dots \cup M(H_L).
\eqn
By the principal orbit theorem, the set $M(H_L)$ is open and dense in $M$, while $M(H_1)$ is a closed, $G$-invariant submanifold. Denote by $\nu_1$ the normal $G$-vector bundle of $M(H_1)$, and by $f_1: \nu_1 \rightarrow M$ a $G$-invariant tubular neighbourhood of $M(H_1)$ in $M$. Take a $G$-invariant metric on $\nu_1$, and put
\bqn
{D}_t(\nu_1)=\mklm {v \in \nu_1: \norm{v} \leq t }, \qquad t >0.
\eqn
We then define the compact, $G$-invariant submanifold with boundary
\bqn
M_2=M - f_1(\stackrel{\circ}{D}_{1/2}(\nu_1)), 
\eqn
on which the isotropy type $(H_1)$ no longer occurs, and endow it with a $G$-invariant Riemannian metric with product form in a $G$-invariant collar neighborhood of $\gd M_2$ in $M_2$. Consider now the union $M_2(H_2)$ of orbits in $M_2$ of type $G/H_2$, a compact $G$-invariant submanifold of $M_2$ with boundary, and let $f_2:\nu_2 \rightarrow M_2$ be a $G$-invariant tubular neighbourhood  of $M_2(H_2)$ in $M_2$, which exists due to the particular form of the metric on $M_2$. Taking a $G$-invariant metric on $\nu_2$, we define
\bqn
M_3=M_2 - f_2(\stackrel{\circ}{D}_{1/2}(\nu_2)), 
\eqn
which constitutes a compact $G$-invariant submanifold with corners and isotropy types $(H_3), \dots (H_L)$. Continuing this way, one finally obtains for $M$ the decomposition 
\bqn  
M= f_1({D}_{1/2}(\nu_1)) \cup \dots  \cup f_L({D}_{1/2}(\nu_L)),
\eqn
where we identified  $f_L({D}_{1/2}(\nu_L))$ with $M_L$, which leads to the covering 
\bqn
M= f_1(\stackrel{\circ}{D}_{1}(\nu_1)) \cup \dots \cup f_L(\stackrel{\circ}{D}_{1}(\nu_L)),\qquad  f_L(\stackrel{\circ}{D}_{1}(\nu_L))=\stackrel{\circ} M_L.
\eqn

\section{The desingularization process}

Let us now start resolving the singularities of the critical set $\Crit(\psi)$.  For this, we will have to set up an iterative desingularization process along the strata of the underlying $G$-action, where each step in our iteration will consist of a decomposition, a monoidal transformation, and a reduction. For simplicity, we shall assume that at each iteration step the set of maximally singular orbits is connected. Otherwise each of the connected components, which might even have different dimensions,  has to be treated separately. 

\subsection*{First decomposition} As  in the previous section, let $f_k:\nu_k\rightarrow M_k$ be an invariant tubular neighborhood of $M_k(H_k)$ in 
\bdm
M_k=M-\bigcup_{i=1}^{k-1} f_i(\stackrel{\circ}{D}_{1/2}(\nu_i)),
\edm 
a manifold with corners on which $G$ acts with the isotropy types $(H_k), (H_{k+1}), \dots, (H_L)$, and put  $W_k=f_k(\stackrel{\circ}{D_1}(\nu_k))$. Introduce a partion of unity $\mklm{\chi_k}_{k=1,\dots,L}$ subordinate to the covering $\mklm{W_k}$, and define 
\bqn
I_k(\mu)=   \int _{T^\ast W_k}  \int_{\g} e^{i \psi(\eta,X)/\mu }   (a\chi_k)(\eta,X)  \d X \d \eta,   
\eqn
so that $I(\mu)=I_1(\mu)+\dots +I_L(\mu)$. 
As will be explained in Lemma \ref{lemma:Reg}, the critical set of $\psi$ is clean on the support of $a\chi_L$, so that we can apply  directly the stationary phase theorem to compute the integral $I_L(\mu)$. But if $k \in \mklm{1, \dots, L-1}$, the sets 
\begin{align*}
  \Omega_k&=\Omega \, \cap \, T^\ast W_k, \\
   \Crit_k(\psi) &=\mklm{(\eta,X)\in \Omega_k \times \g: \tilde X_\eta=0}
\end{align*}
are no longer smooth manifolds, so that the stationary phase theorem can not  a priori be applied  in this situation. Instead, we shall resolve the singularities of $\Crit_k(\psi)$, and after this  apply the principle of the stationary phase in a suitable resolution space. For this, introduce for each $x^{(k)}\in M_k(H_k)$ the decomposition
\bqn
\g=\g_{x^{(k)}}\oplus \g_{x^{(k)}}^\perp, 
\eqn
where $\g_{x^{(k)}}$ denotes the Lie algebra of the stabilizer $G_{x^{(k)}}$ of $x^{(k)}$, and $\g_{x^{(k)}}^\perp$ its orthogonal complement with respect to the scalar product $\tr (^tAB)$ in $\g$. Let further $A_1(x^{(k)}), \dots, A_{d^{(k)}}(x^{(k)})$ be an orthonormal basis of $\g_{x^{(k)}}^\perp$, and $B_1(x^{(k)}),\dots, B_{e^{(k)}}(x^{(k)})$ an orthonormal basis of $\g_{x^{(k)}}$. Consider  the isotropy algebra bundle over $M_k(H_k)$
\bqn
\mathfrak{iso} \,M_k(H_k) \rightarrow M_k(H_k),
\eqn
as well as the canonical projection
\bqn 
\pi_k: W_k \rightarrow M_k(H_k), \qquad m=f_k(x^{(k)},v^{(k)}) \mapsto x^{(k)}, \qquad x^{(k)} \in M_k(H_k), \, v^{(k)} \in (\nu_k)_{x^{(k)}},
\eqn
where $f_k(x^{(k)},v^{(k)})=(\exp_{x^{(k)}} \circ \gamma ^{(k)})( v^{(k)})$, and $\gamma^{(k)}$ is an equivariant diffeomorphism from $(\nu_k)_{x^{(k)}}$ onto its image, see Bredon \cite{bredon}, pages 306-307. We consider then the induced bundle
\bqn
\pi_k^\ast \frak{iso}\,  M_k(H_k)=\mklm {(f_k(x^{(k)},v^{(k)}),X)\in W_k \times \g: X \in \g_{x^{(k)}}},
\eqn
and denote by  
$$\Pi_k: W_k \times \g \rightarrow  \pi_k^\ast \frak{iso} \, M_k(H_k)$$
the canonical projection which is obtained by considering geodesic normal coordinates around $\pi_k^\ast \, \frak{iso} M_k(H_k)$, and  identifying  $W_k\times \g$ with a neighborhood of the zero section in  the normal bundle $N\, \pi_k^\ast \,\frak{iso} \, M_k(H_k)$. Note  that 
 the fiber of the normal bundle to $\pi^\ast \frak{iso}\,  M_k(H_k)$ at a point $(f_k(x^{(k)},v^{(k)}),X)$ can be identified with $\g_{x^{(k)}}^\perp$.  Integrating along the fibers of the normal bundle to  $\pi_k^\ast \, \frak{iso} M_k(H_k)$ we therefore obtain  for $I_k(\mu)$ the expression
\begin{gather*}
 I_k(\mu)=\int_{\pi_k^\ast \, \frak{iso} M_k(H_k)} \left [\int_{\Pi_k^{-1}(m,B^{(k)})\times T^\ast_m W_k
} e^{i\psi/\mu} a\chi_k \, \Phi_k \, \d (T^\ast_m W_k)(\eta) \, dA^{(k)}   \right ]  dB^{(k)} \d m \\
 =\int_{ M_k(H_k) }\left [\int_{ \g \times \pi_k^{-1}(x^{(k)})\times T^\ast_{\exp_{x^{(k)}} v^{(k)}} W_k
} e^{i\psi/\mu} a\chi_k \, \Phi_k \, \d (T^\ast_{\exp_{x^{(k)}} v^{(k)}}W_k)(\eta) \, dA^{(k)} \, dB^{(k)} \, dv^{(k)}  \right ]      dx^{(k)},
\end{gather*}
where 
\bqn
\gamma^{(k)} \big (\stackrel \circ D_1(\nu_k)_{x^{(k)}}\big )  \times  \g_{x^{(k)}}^\perp \times   \g_{x^{(k)}} \ni (v^{(k)},  A^{(k)},B^{(k)})\mapsto (\exp_{x^{(k)}} v^{(k)},A^{(k)}+B^{(k)})=(m,X)
\eqn
are coordinates on $ \pi_k^{-1}(x^{(k)})\times \g$, while $dm$, $dx^{(k)}$, $dA^{(k)}, dB^{(k)} ,  d v^{(k)}$, and $ \d (T^\ast_m W_k)(\eta) $ are suitable measures on $W_k$,   $M_k(H_k)$, $\g_{x^{(k)}}^\perp$, $\g_{x^{(k)}}$, $\gamma^{(k)}(\stackrel \circ D_1(\nu_k)_{x^{(k)}})$, and $T^\ast_m W_k$, respectively, such that 
\bqn \d X \d \eta \equiv\Phi_k \d (T^\ast_{\exp_{x^{(k)}} v^{(k)}}W_k)(\eta)  dA^{(k)} \, dB^{(k)} \, dv^{(k)} \, dx^{(k)},
\eqn
where $\Phi_k$ is a Jacobian.

\subsection*{First monoidal transformation}  Let now $k \in \mklm{1, \dots, L-1} $ be fixed. For the further analysis of the integral $I_k(\mu)$, we shall sucessively resolve the singularities of $\Crit_k(\psi)$, until we are in position to apply the principle of the stationary phase in a suitable resolution space. To begin with, we perform a monoidal transformation 
\bqn 
\zeta_k: B_{Z_k}( W_k \times \g) \longrightarrow W_k \times \g
\eqn
 in $W_k \times \g$ with center $Z_k= \frak{iso} \, M_k(H_k)$. For this, let us write  $ A^{(k)}(x^{(k)},\alpha^{(k)})=\sum  \alpha_i^{(k)} A_i^{(k)}(x^{(k)})$,  $ B^{(k)}(x^{(k)},\beta^{(k)})=\sum  \beta_i^{(k)} B_i^{(k)}(x^{(k)})$, and 
\bqn
 v^{(k)}= \sum _{i=1}^{c^{(k)}} q_i^{(k)} v_i^{(k)}(x^{(k)})\in \, \gamma^{(k)} \big ( \stackrel \circ D_1(\nu_k)_{x^{(k)}}\big ),
\eqn
where $\{v_1^{(k)}(x^{(k)}),\dots  ,v_{c^{(k)}}^{(k)}(x^{(k)})  \}$ denotes an orthonormal frame in $\nu_k$. With respect to these coordinates we have $Z_k=\mklm{\alpha^{(k)}=0, \, q^{(k)}=0}$, where $q^{(k)}=(q_1^{(k)}, \dots, q_{c^{(k)}}^{(k)})$, so that 
\begin{gather*}
B_{Z_k}( W_k \times \g)=\mklm{ (m,X,[t]) \in W_k \times \g \times \mathbb{RP}^{c^{(k)} + d^{(k)}-1}:  q^{(k)}_i t_j = q^{(k)}_j t_i, \, \alpha^{(k)}_i t_{c^{(k)}+j}=\alpha^{(k)}_j t_{c^{(k)}+i}  },\\
\zeta_k: (m,X,[t]) \longmapsto (m,X).
\end{gather*}
Let us  now cover $B_{Z_k}( W_k \times \g)$ with the charts $\mklm{(\phi_\rho, U_\rho)}$,  $U_\rho=B_{Z_k}( W_k \times \g)\cap (W_k \times \g \times V_\rho)$, where $V_\rho=\mklm{[t] \in  \mathbb{RP}^{c^{(k)} + d^{(k)}-1}: t_\rho\not=0}$. We obtain for $\zeta_k$ in each of the $q^{(k)}$-charts $\mklm{U_\rho}_{1\leq \rho\leq c^{(k)}}$ the expressions
\bqn
\, ^\rho \zeta_k=\zeta_k \circ \phi_\rho: ( x^{(k)},\tau_k,\, ^\rho \tilde v^{(k)},  A^{(k)}, B^{(k)}) \mapsto (\exp_{x^{(k)}} \tau_k  \,^\rho\tilde v^{(k)}, \tau_k A^{(k)}+ B^{(k)})\equiv(m,X),
\eqn
where $\tau_k \in (-1,1)$, 
\bqn
 \,^\rho \tilde v^{(k)}(x^{(k)},q^{(k)})= \gamma^{(k)} \Big ( \big (v_\rho^{(k)}(x^{(k)})+ \sum _{i\not=\rho}^{c^{(k)}} q_i^{(k)} v_i^{(k)}(x^{(k)})\big )  \Big / \sqrt{1 + \sum_{i\not=\rho} (q_i^{(k)} )^2} \Big ) \in \gamma^{(k)} (\,^\rho S_k^+)_{x^{(k)}},
\eqn
and 
$$\,^\rho S_k^+=\mklm{v \in \nu_k: v = \sum s_i v_i, s_\rho>0,  \norm{v}=1}.$$
  Note that for each $1 \leq \rho\leq c^{(k)}$, $$W_k \simeq f_k(\,^\rho S_k^+ \times (-1,1))$$ up to a set of measure zero. 
Now, for given $m \in M$, let $Z_m \subset T_mM$ be a neighborhood of zero such that $\exp_m: Z_m \longrightarrow M$ is a diffeomorphism onto its image. Then
\bqn
\qquad (\exp_m)_{\ast,v}: T_v Z_m \longrightarrow T_{\exp_mv}M, \quad v \in Z_m, 
\eqn
and $g \cdot \exp_m v= L_g (\exp_m v)=\exp_{L_g(m)}(L_g)_{\ast,m}( v)$. As a consequence, since $B^{(k)} \in \g_{x^{(k)}}$, we obtain
\begin{align*}
\widetilde{B^{(k)}}_{\exp_{x^{(k)}} \tau_k \, ^\rho \tilde v^{(k)}}&= \frac d{dt} \exp_{x^{(k)}} \big ( L_{\e{-t B^{(k)}}} \big )_{\ast, x^{(k)}}( \tau_k \, ^\rho \tilde v^{(k)})_{|t=0}= (\exp_{x^{(k)}})_{\ast, \tau_k \, ^\rho \tilde v ^{(k)}}\big (\lambda(B^{(k)}) ( \tau_k \, ^\rho \tilde v^{(k)})\big ) \\
&= \tau_k (\exp_{x^{(k)}})_{\ast, \tau_k \, ^\rho \tilde v ^{(k)}}\big (\lambda(B^{(k)})( \, ^\rho \tilde v^{(k)}) \big ),
\end{align*}
where we denoted by
\bqn
\lambda: \g_{x^{(k)}} \longrightarrow \mathfrak{gl}(\nu_{k,x^{(k)}}), \quad B^{(k)} \mapsto \frac d{dt} (L_{\e{-tB^{(k)}}})_{\ast, x^{(k)}|t=0} 
\eqn
the linear representation of $ \g_{x^{(k)}}$ in $\nu_{k,x^{(k)}}$, and made the canonical identification $T_v(\nu_{k,x^{(k)}}) \equiv \nu_{k,x^{(k)}}$ for any $v \in(\nu_k)_{x^{(k)}}$.  With $\pi(\eta)=m$ we therefore obtain for the phase function the factorization 
\begin{align*}
\psi(\eta ,X)&=\eta(\tilde X_{\pi(\eta)})=\eta \big (\widetilde{(\tau_k A^{(k)}+B^{(k)})}_{\exp_{x^{(k)}} \tau_k \, ^\rho \tilde v^{(k)}}\big )\\
&= \tau_k \Big [ \eta\big( \widetilde{A^{(k)}}_{\exp_{x^{(k)}} \tau_k \, ^\rho \tilde v^{(k)}}\big ) + \eta \big ( (\exp_{x^{(k)}})_{\ast, \tau_k \, ^\rho \tilde v ^{(k)}}[\lambda(B^{(k)}) ^\rho \tilde v^{(k)}]\big ) \Big]. 
\end{align*} 
 Similar considerations hold for $\zeta_k$ in the $\alpha^{(k)}$-charts $\mklm{U_\rho}_{c^{(k)}+1 \leq \rho \leq c^{(k)}+d^{(k)}}$, so that we get
\bqn 
 \psi \circ (\id_{fiber} \otimes \zeta_k) =  \,^{(k)} \tilde \psi^{tot}=\tau_k \cdot  \,  ^{(k)} \phw, 
 \eqn
$ \,^{(k)} \tilde \psi^{tot}$ and $  \,  ^{(k)} \phw $ being the \emph{total} and \emph{weak transform} of the phase function $\psi$, respectively.\footnote{For an explanation of this notation, see section \ref{sec:6}.} Introducing a partition $\mklm{u_\rho}$ of unity subordinated to the covering $\mklm{U_\rho}$ now yields 
\bqn 
I_k(\mu)=\sum_{\rho=1} ^{c^{(k)}} \, ^\rho I_k(\mu)+\sum_{\rho=c^{(k)}+1} ^{d^{(k)}} \, ^\rho \tilde I_k(\mu),
\eqn 
where the integrals  $ ^\rho I_k(\mu)$ and $ ^\rho \tilde I_k(\mu)$ are given by the expressions
\begin{gather*}
\int_{ M_k(H_k) }\left [\int_{(\id_{fiber} \otimes\, ^\rho \zeta)^{-1}_k( \g \times \pi_k^{-1}(x^{(k)})\times T^\ast_{\exp_{x^{(k)}} v^{(k)}}W_k)}  (u_\rho \circ \phi_\rho) \, (\id_{fiber} \otimes \,^\rho\zeta_k )^\ast (  e^{i\psi /\mu }a\chi_k \, \right. \\ 
\left. \Phi_k  \d (T^\ast_{\exp_{x^{(k)}} v^{(k)}}W_k)(\eta) \, dA^{(k)} \, dB^{(k)} \,  d v^{(k)} )   \,  \right ]      dx^{(k)}.
\end{gather*}
As we shall see in section \ref{sec:8}, the weak transform $\, ^{(k)} \phw$ has no critical points in the $\alpha^{(k)}$-charts, which implies that the integrals $ ^\rho \tilde I_k(\mu)$ contribute to $I(\mu)$ only with higher order terms. In what follows, we shall therefore restrict ourselves to the situation where $a_k   \circ ( \id_{fiber} \otimes \zeta_k)$ has compact support in one of the $q^{(k)}$-charts. Thus we can assume $I_k(\mu)$ to be given by  
\begin{gather*}
\int_{ M_k(H_k) }\Big [\int_{\zeta^{-1}_k( \g \times \pi_k^{-1}(x^{(k)}))\times T^\ast_{\exp_{x^{(k)}} \tau_k \tilde v^{(k)}}W_k} e^{i\frac{\tau_k}\mu  \,  ^{(k)} \phw}(a\chi_k \circ (\id_{fiber} \otimes \zeta_k) ) \, \tilde \Phi_k \\  \d (T^\ast_{\exp_{x^{(k)}} \tau_k \tilde v^{(k)}}W_k)(\eta)  \, dA^{(k)} \, dB^{(k)} \, d\tilde v^{(k)} \d \tau_k \Big ]     \,  dx^{(k)}\\
=\int_{ M_k(H_k) \times (-1,1)  }\Big [\int_{ \gamma^{(k)}((S_k^+)_{x^{(k)}}) \times  \g_{x^{(k)}} \times \g_{x^{(k)}}^\perp \times T^\ast_{\exp_{x^{(k)}} \tau_k \tilde v^{(k)}}W_k} e^{i\frac{\tau_k}\mu \,  ^{(k)} \phw}(a\chi_k\circ (\id_{fiber} \otimes \zeta_k) ) \, \tilde \Phi_k \\   \d(T^\ast_{\exp_{x^{(k)}} \tau_k \tilde v^{(k)}}W_k)(\eta) \, dA^{(k)} \, dB^{(k)} \, d\tilde v^{(k)}   \Big ]  \d \tau_k \,     \d x^{(k)},
\end{gather*}
 where we skipped the index $\rho$, in particular identifying $\zeta_k$ with $\,^\rho \zeta_k$, and  took into account that 
\bqn 
\zeta^{-1}_k( \g \times \pi_k^{-1}(x^{(k)}))=\{x^{(k)}\} \times (-1,1) \times \gamma^{(k)}((S_k^+)_{x^{(k)}}) \times  \g_{x^{(k)}} \times \g_{x^{(k)}}^\perp.
\eqn
Here $d\tilde v^{(k)}$ is a suitable measure on the set $\gamma^{(k)}((S_k^+)_{x^{(k)}})$ such that 
$$\d X \d \eta \equiv \tilde \Phi_k \, \d(T^\ast_{\exp_{x^{(k)}} \tau_k \tilde v^{(k)}}W_k)(\eta) \, d A^{(k)} \, dB^{(k)} \,  d\tilde v^{(k)} \, d\tau_k \, dx^{(k)}.$$ 
Furthermore, a computation shows that  
\bqn 
\tilde \Phi_k  = |\tau_k|^{c^{(k)}+d^{(k)}-1} \, \Phi_k\circ \zeta_k. 
\eqn

\subsection*{First reduction} Let us now assume that there exists a $m \in W_k$ with orbit type $G/H_j$, and let  $x^{(k)} \in M_k(H_k), v^{(k)} \in (\nu_k)_{x^{(k)}}$ be such that $m=f_k(x^{(k)},v^{(k)})$. Since we can assume that $m$ lies in a slice at $x^{(k)}$ around the $G$-orbit of $x^{(k)}$, we have $G_m \subset G_{x^{(k)}}$, see Kawakubo \cite{kawakubo}, pages 184-185, and Bredon \cite{bredon}, page 86. Hence, $H_j\simeq G_m$ must be conjugate to a subgroup of $H_k\simeq G_{x^{(k)}}$. Now, $G$ acts on $M_k$ with the isotropy types $(H_k),(H_{k+1}), \dots, (H_L)$. The isotropy types occuring in $W_k$ are therefore those for which the corresponding isotropy groups  $H_k,H_{k+1}, \dots, H_L$ are conjugate to a subgroup of $H_k$, and we shall denote them by
\bqn
(H_k) = (H_{i_1}), (H_{i_2}), \dots, (H_L).
\eqn
 Now, for every $x^{(k)}\in M_k(H_k)$, $ (\nu_k)_{x^{(k)}}$ is an orthogonal $ G_{x^{(k)}}$-space; therefore $G_{x^{(k)}}$ acts on  $(S_k)_{x^{(k)}}$ with   isotropy types $(H_{i_2}), \dots, (H_L)$, cp. Donnelly \cite{donnelly78}, pp. 34. Furthermore,  by the invariant tubular neighborhood theorem, one has the isomorphism
\bqn
W_k/G \simeq (\nu_k)_{x^{(k)}}/ G_{x^{(k)}},
\eqn
so that $G$ acts on $S_k=\mklm{ v \in \nu_k: \norm {v}=1}$ with  isotropy types $(H_{i_2}), \dots, (H_L)$ as well. 
As will turn out, if $G$ acted on $S_k$ only with type $(H_L)$, the critical set of $^{(k)} \phw$ would be clean in the sense of Bott, and we could proceed to apply the stationary phase theorem to compute $I_k(\mu)$. But in general this will not be the case, and we are forced to continue with the iteration.

\subsection*{Second decomposition}

Let now $x^{(k)}\in M_k(H_k)$ be fixed. Since $\gamma^{(k)}: \nu_k \rightarrow \nu_k$ is an equivariant diffeomorphism onto its image,   $\gamma^{(k)}((S_k)_{x^{(k)}})$ is a compact $G_{x^{(k)}}$-manifold, and we consider the covering  
\bqn
\gamma^{(k)}((S_k)_{x^{(k)}}) =W_{ki_2} \cup \dots \cup W_{kL}, \qquad W_{ki_j}= f_{ki_j}(\stackrel \circ D_1(\nu_{ki_j})), \quad W_{kL}=\mathrm{Int} ( \gamma^{(k)}((S_k)_{x^{(k)}})_{L}),
\eqn
where $f_{ki_j}:\nu_{ki_j} \rightarrow  \gamma^{(k)}((S_k)_{x^{(k)}})_{i_j}$ is an invariant tubular neighborhood of $\gamma^{(k)}((S_k)_{x^{(k)}})_{i_j}(H_{i_j})$ in 
\bqn 
\gamma^{(k)}((S_k)_{x^{(k)}})_{i_j}=\gamma^{(k)}((S_k)_{x^{(k)}}) - \bigcup_{r=2}^{j-1} f_{ki_r}(\stackrel \circ D_{1/2}(\nu_{ki_r})),  \qquad j\geq 2, 
\eqn
and $f_{ki_j}(x^{(i_j)},v^{(i_j)})=(\exp_{x^{(i_j)}} \circ \gamma^{(i_j)})(v^{(i_j)})$, $x^{(i_j)} \in  \gamma^{(k)}((S_k)_{x^{(k)}})_{i_j} (H_{i_j})$, $ v ^{(i_j)} \in ( \nu_{ki_j}) _{x^{(i_j)}}$, $\gamma^{(i_j)}: \nu_{ki_j} \rightarrow \nu_{ki_j}$ being an equivariant diffeomorphism onto its image. 
Let further $\{\chi_{ki_j}\}$ denote a partition of the unity subordinated to the covering $\mklm{W_{ki_j}}$,  and define
\begin{align*}
I_{ki_j}(\mu) =&\int_{ M_k(H_k) \times (-1,1)  }\Big [\int_{ \gamma^{(k)}((S_k^+)_{x^{(k)}}) \times  \g_{x^{(k)}} \times \g_{x^{(k)}}^\perp \times T^\ast_{\exp_{x^{(k)}} \tau_k \tilde v^{(k)}}W_k} e^{i\frac{\tau_k}\mu \,  ^{(k)} \phw}(a\chi_k\circ (\id_{fiber} \otimes \zeta_k) ) \\ & \chi_{ki_j}
 \tilde \Phi_k \, \d(T^\ast_{\exp_{x^{(k)}} \tau_k \tilde v^{(k)}}W_k)(\eta) \, dA^{(k)} \, dB^{(k)} \, d\tilde v^{(k)}   \Big ]  \d \tau_k \,   \d x^{(k)},
\end{align*}
so that $I_k(\mu)= I_{ki_2}(\mu) + \dots + I_{kL}(\mu)$. It is important to note that the partition functions $\chi_{ki_j}$ depend smoothly on $x^{(k)}$ as a consequence of the tubular neighborhood theorem, by which in particular $\gamma^{(k)}(S_k) /G \simeq \gamma^{(k)}((S_k)_{x^{(k)}})/G_{x^{(k)}}$, and the smooth dependence in $x^{(k)}$ of the induced Riemannian metric on $\gamma^{(k)}((S_k)_{x^{(k)}})$, and  the metrics on the normal bundles $\nu_{ki_j}$.
Since $G_{x^{(k)}}$ acts on $W_{kL}$ only with type $(H_L)$, the iteration process for $I_{kL}(\mu)$ ends here. For the remaining integrals $I_{ki_j}(\mu)$ with $k < i_j < L$, let us denote by
\bqn
\frak{iso} \,\gamma^{(k)}((S_k)_{x^{(k)}})_{i_j}(H_{i_j}) \rightarrow \gamma^{(k)}((S_k)_{x^{(k)}})_{i_j}(H_{i_j})
\eqn
the isotropy algebra bundle over $\gamma^{(k)}((S_k)_{x^{(k)}})_{i_j}(H_{i_j})$, and by $\pi_{ki_j}: W_{ki_j} \rightarrow \gamma^{(k)}((S_k)_{x^{(k)}})_{i_j}(H_{i_j})$ the canonical projection.  For $x^{(i_j)} \in \gamma^{(k)}((S_k)_{x^{(k)}})_{i_j}(H_{i_j})$, consider the decomposition
\bqn
\g = \g_{x^{(k)}} \oplus \g_{x^{(k)}}^\perp =(\g_{x^{(i_j)}}\oplus \g_{x^{(i_j)}}^\perp) \oplus \g_{x^{(k)}}^\perp.
\eqn
Let further $A_1^{(i_j)}, \dots ,A_{d^{(i_j)}}^{(i_j)}$ be an orthonormal frame in $ \g_{x^{(i_j)}}^\perp$,  as well as $B_1^{(i_j)}, \dots ,B_{e^{(i_j)}}^{(i_j)}$ be an orthonormal frame in $ \g_{x^{(i_j)}}$, and $v_1^{(ki_j)}, \dots, v_{c^{(i_j)}}^{(ki_j)}$ an orthonormal frame in $(\nu_{ki_j}) _{x^{(i_j)}}$. Integrating along the fibers in a neighborhood of $\pi_{ki_j}^\ast \frak{iso} \, \gamma^{(k)}((S_k)_{x^{(k)}})_{i_j}(H_{i_j}) \subset W_{ki_j} \times \g_{x^{(k)}}$ then yields  for $ I_{ki_j}(\mu)$ the expression
\begin{align*}
& \int_{M_k(H_k)\times (-1,1)} \Big [ \int_{\gamma^{(k)}((S^+_k)_{x^{(k)}})_{i_j}(H_{i_j})} \Big [ \int_{\pi_{ki_j}^{-1} (x^{(i_j)})\times \g_{x^{(k)}}\times \g_{x^{(k)}}^\perp \times  T^\ast_{\exp _{x^{(k)}} \tau_k \exp_{x^{(i_j)}} v ^{( i_j)}} W_k } e^{i\frac{\tau_k}\mu \, ^{(k)} \phw } \times \\ &  (a\chi_k  \circ (\id_{fiber} \otimes \zeta_k) ) \chi_{ki_j} \,   \Phi_{ki_j} \, \d(T^\ast_{\exp _{x^{(k)}} \tau_k \exp_{x^{(i_j)}} v ^{( i_j)}} (W_k)(\eta)  \, d A^{(k)} \, d A^{(i_j)} \, dB^{(i_j)} \,  dv^{(i_j)}     \big ] dx^{(i_j)}  \Big ]   d\tau_k dx^{(k)},
\end{align*}
where $\Phi_{ki_j}$ is a Jacobian, and 
\bqn
\gamma^{(i_j)}\big (  \stackrel \circ D_1(\nu_{ki_j})_{x^{(i_j)}}\big )  \times \g_{x^{(i_j)}}^\perp \times   \g_{x^{(i_j)}} \ni (v^{(i_j)}, A^{(i_j)},B^{(i_j)})\mapsto (\exp_{x^{(i_j)}} v^{(i_j)},A^{(i_j)} + B^{(i_j)})=(\tilde v^{(k)},B^{(k)})
\eqn
are coordinates on $ \pi_{ki_j}^{-1}(x^{(i_j)})\times \g_{x^{(k)}}$, while $dx^{(i_j)}$, and $dA^{(i_j)}, dB^{(i_j)} ,  dv^{(i_j)} $ are suitable measures in the spaces   $\gamma^{(k)}((S_k)_{x^{(k)}})_{i_j}(H_{i_j})$, and  $\g_{x^{(i_j)}}^\perp$, $\g_{x^{(i_j)}}$, $\stackrel \circ D_1(\nu_{ki_j})_{x^{(i_j)}}$, respectively, such that we have the equality $ \tilde \Phi_k \d B^{(k)} \d \tilde v^{(k)}\equiv\Phi_{ki_j} \,  dA^{(i_j)} \, dB^{(i_j)} \,  dv^{(i_j)} \, dx^{(i_j)}$.

\subsection*{Second monoidal transformation}

Let us fix an $l$ such that $k< l < L$, and consider in the $q^{(k)}$-chart $(-1,1)\times \gamma^{(k)}(S_k^+) \times \g$ a monoidal transformation 
\bqn 
\zeta_{kl}: B_{Z_{kl}}((-1,1)\times \gamma^{(k)}(S_k^+) \times \g) \longrightarrow (-1,1)\times \gamma^{(k)}(S_k^+) \times \g
\eqn
with center
\bqn
Z_{kl}=  (-1,1)\times \frak{iso} \,\Gamma_{k,l}^+(H_l), \qquad \Gamma_{k,l}^+ = \bigcup _{x^{(k)} \in M_k(H_k)} \gamma^{(k)}( (S_k^+)_{x^{(k)}})_l.
\eqn
Writing  $ A^{(l)}(x^{(k)},x^{(l)},\alpha^{(l)})=\sum  \alpha_i^{(l)} A_i^{(l)}(x^{(k)}, x^{(l)})$,  $ B^{(l)}(x^{(k)},x^{(l)},\beta^{(l)})=\sum  \beta_i^{(l)} B_i^{(l)}(x^{(l)})$, and 
\bqn
 v^{(l)}(x^{(k)}, x^{(l)},q^{(l)})= \sum _{i=1}^{c^{(l)}} q_i^{(l)} v_i^{(kl)}(x^{(k)},x^{(l)}),
\eqn
one has $Z_{kl}=\mklm{\alpha^{(k)}=0, \, \alpha^{(l)}=0, \, q^{(l)}=0}$, which in particular shows that $Z_{kl}$ is a manifold. If we now cover $B_{Z_{kl}}((-1,1)\times \gamma^{(k)}(S_k^+) \times \g)$ with the standard charts, we shall see again in section \ref{sec:8} that   modulo higher order terms we can assume that $((a\chi_k \circ ( \id_{fiber} \otimes \zeta_k))\chi_{kl}) \circ \zeta_{kl}$ has compact support in one of the $q^{(l)}$-charts. Therefore it suffices to examine $\zeta_{kl}$ in one of these charts, in which it reads
\begin{gather*}
\zeta_{kl}: (x^{(k)},\tau_k, x^{(l)}, \tau_l, \tilde v^{(l)}, A^{(k)},  A^{(l)},  B^{(l)}) \mapsto \\\mapsto (x^{(k)},\tau_k, \exp_{x^{(l)}} \tau_l \tilde v^{(l)}, \tau_l A^{(k)}, \tau_l  A^{(l)}+ B^{(l)})\equiv(x^{(k)},\tau_k, \tilde v^{(k)},A^{(k)}, B^{(k)}),
\end{gather*}
where
\bqn
\tilde v^{(l)}(x^{(k)},x^{(l)},q^{(l)})= \gamma ^{(l)} \left ( \big (v_\rho^{(kl)}+ \sum _{i\not=\rho}^{c^{(l)}} q_i^{(l)} v_i^{(kl)}\big) \Big / \sqrt{1 +  \sum _{i\not=\rho} (q_i^{(l)})^2}\right )\in \gamma^{(l)}\big ((S_{kl}^+)_{x^{(l)}}\big ) 
\eqn
for some $\rho$. Note that $Z_{kl}$ has normal crossings with the exceptional divisor $E_k=\zeta_k^{-1}(Z_k) = \mklm{\tau_k=0}$, and that 
$$W_{kl} \simeq f_{kl} (S_{kl}^+ \times (-1,1))$$ up to a set of measure zero, where $S_{kl}$ denotes
 the sphere subbundle in $\nu_{kl}$,  and we set $S_{kl}^+=\mklm { v \in S_{kl}:  v=\sum v_i v_i^{(kl)}, \, v_\rho>0 }$. Consequently,  the phase function factorizes according to 
\bqn 
\psi \circ (\id_{fiber} \otimes (\zeta_k \circ \zeta_{kl}))= \,^{(kl)} \tilde \psi^{tot}=\tau_k \, \tau_l \cdot \,  ^{(kl)} \phw,
\eqn
which in the given charts reads
\begin{align*}
\psi(\eta,X)&=\tau_k \left [ \eta \Big (\widetilde{\tau_l A^{(k)}}_{\exp_{x^{(k)}} \tau_k   \exp_{x^{(l)}} \tau_l \tilde v^{(l)}}\Big )\right. \\
&\left. +  \eta \Big ( (\exp_{x^{(k)}})_{\ast, \tau_k  \exp_{x^{(l)}} \tau_l \tilde v^{(l)}}[\lambda(\tau_l A^{(l)}+B^{(l)})  \exp_{x^{(l)}}\tau_l \tilde v^{(l)} ] \Big ) \right] \\
&=\tau_k  \tau_l \left [ \eta \Big (\widetilde{ A^{(k)}}_{\exp_{x^{(k)}} \tau_k   \exp_{x^{(l)}} \tau_l \tilde v^{(l)}}\Big )+  \eta \Big ( (\exp_{x^{(k)}})_{\ast, \tau_k  \exp_{x^{(l)}} \tau_l \tilde v^{(l)}}[\lambda( A^{(l)})  \exp_{x^{(l)}}\tau_l \tilde v^{(l)} ] \Big ) \right. \\
&\left. +  \eta \Big ( (\exp_{x^{(k)}})_{\ast, \tau_k  \exp_{x^{(l)}} \tau_l \tilde v^{(l)}}\big [ (\exp_{x^{(l)}})_{\ast,\tau_l \tilde v^{(l)}} [ ( \lambda(B^{(l)})  \tilde v ^{(l)}]\big ] \Big ) \right] \\
\end{align*}
where we took into account that
$$\lambda( B^{(l)})  \exp_{x^{(l)}}\tau_l \tilde v^{(l)} = \frac d {dt}  \exp_{x^{(l)}}\big (L_{\e{-t B^{(l)}}} \big )_{\ast,x^{(k)}} \tau_l  \tilde v^{(l)} _{|t=0}=(\exp_{x^{(l)}})_{\ast,\tau_l \tilde v^{(l)}} \big ( \lambda(B^{(l)})  \tau_l \tilde v ^{(l)}\big ).
$$
Since 
\begin{gather*}
\zeta_{kl}^{-1}(\{x^{(k)}\} \times \{ \tau_k\} \times \pi_{kl}^{-1} (x^{(l)})\times \g_{x^{(k)}}\times \g_{x^{(k)}}^\perp )\\=\{x^{(k)}\} \times \{\tau_k\} \times \{x^{(l)}\} \times (-1,1) \times \gamma^{(l)}\big ((S_{kl}^+)_{x^{(l)}}\big )  \times \g _{x^{(l)}}\times \g _{x^{(l)}}^\perp \times \g _{x^{(k)}}^\perp,
\end{gather*}
we obtain for $ I_{kl}(\mu)$ the expression
\begin{align*}
&\int_{M_k(H_k)\times (-1,1)} \Big [ \int_{\gamma^{(k)}((S^+_k)_{x^{(k)}})_l(H_{l})} \Big [ \int_{\zeta_{kl}^{-1}(\{x^{(k)}\}\times \{ \tau_k\} \times \pi_{kl}^{-1} (x^{(l)})\times \g_{x^{(k)}}\times \g_{x^{(k)}}^\perp )\times T^\ast _{m^{(kl)}}W_k} e^{i\frac{\tau_k\tau_l }\mu  \, ^{(kl)} \phw}  \\ &\times  ( (a_k  \circ (\id_{fiber} \otimes \zeta_k )) \chi_{kl} ) \circ \zeta_{kl} \,  \tilde  \Phi_{kl} \, \d (T^\ast _{m^{(kl)}}W_k)(\eta) \, d A^{(k)} \, d A^{(l)} \, dB^{(l)} \,  d\tilde v^{(l)}  \, d\tau_l \Big ] dx^{(l)}  \Big ] \, d\tau_k \,   dx^{(k)}\\
&=\int_{M_k(H_k)\times (-1,1)} \Big [ \int_{\gamma^{(k)}((S^+_k)_{x^{(k)}})_l(H_{l})\times (-1,1)} \Big [ \int_{\gamma^{(l)}((S^+_{kl})_{x^{(l)}}) \times \g_{x^{(l)}} \times \g_{x^{(l)}}^\perp \times \g_{x^{(k)}}^\perp )\times T^\ast _{m^{(kl)}}W_k } e^{i\frac{\tau_k\tau_l }\mu  \, ^{(kl)} \phw}  \\ &\times  ( (a\chi_k   \circ (\id_{fiber} \otimes \zeta_k )) \chi_{kl} ) \circ \zeta_{kl} \,  \tilde  \Phi_{kl} \, \d(T^\ast _{m^{(kl)}}W_k)(\eta)  \, d A^{(k)} \, d A^{(l)} \, dB^{(l)} \,  d\tilde v^{(l)}   \Big ]  d\tau_l \,dx^{(l)}  \Big ] \, d\tau_k \,   dx^{(k)},
\end{align*}
where $m^{(kl)}=\exp_{x^{(k)}} \tau_k \exp_{x^{(l)}} \tau_l \tilde v^{(l)}$, and $d\tilde v^{(l)}$ is a suitable measure in $\gamma^{(l)}((S_{kl}^+)_{x^{(l)}})$ such that we have the equality 
$$\d X \d \eta \equiv \tilde \Phi_{kl} \, \d(T^\ast _{m^{(kl)}}W_k)(\eta)\, d A^{(k)} \, dA^{(l)} \, dB^{(l)} \,  d\tilde v^{(l)}  \, d\tau_l \, dx^{(l)}\, d\tau_k \, dx^{(k)}.$$
Furthermore, $\tilde \Phi_{kl} = |\tau_l | ^{c^{(l)} +d^{(k)} +d^{(l)} -1} \Phi_{kl} \circ \zeta_{kl}$.

\subsection*{Second reduction} Now, the group $G_{x^{(k)}}$ acts on $\gamma^{(l)}((S_k)_{x^{(k)}})_l$ with the isotropy types $(H_l)=(H_{i_j}),(H_{i_{j+1}}), \dots, (H_L)$. By the same arguments given in the first reduction, the isotropy types occuring in $W_{kl}$ constitute a subset of these types, and we shall denote them by
\bqn
(H_l) = (H_{i_{r_1}}), (H_{i_{r_2}}), \dots, (H_L).
\eqn
Consequently, $G_{x^{(k)}}$ acts on $S_{kl}$ with the isotropy types $(H_{i_{r_2}}), \dots, (H_L)$. Again, if $G$ acted on $S_{kl}$ only with type $(H_L)$, we shall see in the next section that the critical set of $^{(kl)} \phw$ would be clean. However, in general this will not be the case, and we have to continue with the iteration.

\subsection*{N-th decomposition}
Once one arrives at a sphere bundle $S_{klmn...}$ on which $G$ acts only with the isotropy type $(H_L)$,  the end of the iteration will be reached. More precisely, let $N\geq 3$,  $(H_{i_1}), \dots , (H_{i_{N+1}})=(H_L)$ be a branch of the isotropy tree of the $G$-action on $M$,  and   $f_{i_1}$, $f_{i_1i_2}$, $S_{i_1}$, $S_{i_1i_2}$, as well as  $x^{(i_1)}\in M_{i_1}(H_{i_1}), \quad x^{(i_2)}\in \gamma^{(i_{1})}\big ((S_{i_1}^+)_{x^{(i_{1})}}\big ) _{i_2}(H_{i_2})$ 
be defined as in the first two iteration steps. Let now $N \geq j \geq 3$, and assume that $f_{i_1\dots i_{j-1}}$,  $S_{i_1\dots i_{j-1}}$,... have already been defined. 
Let $\gamma^{(i_{j-1})}((S_{i_1\dots i_{j-1}})_{x^{(i_{j-1})}})_{i_{j}}$ be the submanifold with corners of $\gamma^{(i_{j-1})}((S_{i_1\dots i_{j-1}})_{x^{(i_{j-1})}})$ from which all the isotropy types  less than $(H_{i_{j}})$ have been removed. Consider  the invariant tubular neighborhood $f_{i_1\dots i_{j}}=\exp \circ \gamma^{(i_{j})}: \nu _{i_1\dots i_{j}} \rightarrow\gamma^{(i_{j-1})}((S_{i_1\dots i_{j-1}})_{x^{(i_{j-1})}})_{i_{j}}$ of the set of maximal singular orbits $\gamma^{(i_{j-1})}((S_{i_1\dots i_{j-1}})_{x^{(i_{j-1})}})_{i_{j}}(H_{i_{j}})$, and define $S_{i_1\dots i_{j}}$  as the sphere subbundle in $ \nu _{i_1\dots i_{j}}$. For  $x^{(i_{j})}\in \gamma^{(i_{j-1})}((S^+_{i_1\dots i_{j-1}})_{x^{(i_{j-1})}})_{i_{j}}(H_{i_{j}})$ we then consider the decomposition
\bqn 
 \g_{x^{(i_{j-1})}}= \g_{x^{(i_j)}}\oplus \g_{x^{(i_j)}}^\perp, 
\eqn
and set $d^{(i_j)}=\dim \g_{p^(i_j)}^\perp$, $ e^{(i_j)}=\dim \g_{p^(i_j)}$. After $N$ iterations, one arrives at the decomposition
\begin{gather*}
\g = \g_{x^{(i_1)}} \oplus \g_{x^{(i_1)}}^\perp =(\g_{x^{(i_2)}}\oplus \g_{x^{(i_2)}}^\perp) \oplus \g_{x^{(i_1)}}^\perp =\dots = \g_{x^{(i_N)}}\oplus \g_{x^{(i_N)}}^\perp \oplus \cdots \oplus \g_{x^{(i_1)}}^\perp, 
\end{gather*}
and we denote by  $\{ A_r^{(i_j)}(x^{(i_1)},\dots,x^{(i_j)})\}$ a basis of $\g_{x^(i_j)}^\perp$, and by $\{ B_r^{(i_N)}(x^{(i_1)},\dots,x^{(i_N)})\}$ a basis of $\g_{x^(i_N)}$. Let further 
\begin{align*}
A^{(i_j)} &=\sum_{r=1}^{d^{(i_j)}} \alpha^{(i_j)}_r A_r^{(i_j)}(x^{(i_1)},\dots,x^{(i_j)}), \qquad B^{(i_N)} =\sum_{r=1}^{e^{(i_N)}} \beta^{(i_N)}_r B_r^{(i_N)}(x^{(i_1)},\dots,x^{(i_N)}),
\end{align*}
and put
\bqn
\tilde v^{(i_N)}(x^{(i_j)},\theta^{(i_N)})= \gamma^{(i_N)} \left ( \Big (v_\rho^{(i_1\dots i_N)}(x^{(i_j)})+ \sum _{r\not=\rho}^{c^{(i_N)}} q_r^{(i_N)} v_r^{(i_1\dots i_N)}(x^{(i_j)})\Big )\Big / \sqrt{1 + \sum\limits_{r\not=\rho} (q_r^{(i_N)})^2} \right ) 
\eqn
for some $\rho$, where $\mklm{v_r^{(i_1\dots i_N)}(x^{(i_1)}, \dots x^{(i_N)}  )}$ is an orthonormal frame in $(\nu_{i_1\dots  i_N})_{x^{(i_N)}}$. 
Finally, we shall use the notations
\begin{align*}
m^{(i_j\dots i_{N})}&=\exp_{x^{(i_j)}}[\tau_{i_j} \exp_{x^(i_{j+1})}[\tau_{i_{j+1}}\exp_{x^(i_{j+2})}[\dots [ \tau_{i_{N-2}}\exp_{x^(i_{N-1})}[\tau_{i_{N-1}}\exp_{x^{(i_N)}}[ \tau_{i_N} \tilde v ^{(i_N)}]]] \dots ]]], \\
X^{(i_j\dots i_{N})}&={\tau_{i_j} \cdots \tau_{i_N}A^{(i_j)}}+{\tau_{i_{j+1}} \cdots \tau_{i_N}A^{(i_{j+1})}}+\dots +{\tau_{i_{N-1}}  \tau_{i_N}A^{(i_{N-1})}} +{\tau_{i_N} A^{(i_N)}}  +B^{(i_N)},
\end{align*}
where $j=1,\dots ,N$. Consider now for every fixed $x^{(i_{N-1})} \in \gamma^{(i_{N-2})}((S_{i_1\dots i_{N-2}})_{x^{(i_{N-2})}})_{i_{N-1}}(H_{i_{N-1}})$ the decomposition of the closed $G_{x^{(i_{N-1})}}$-manifold
 $\gamma^{(i_{N-1})}((S_{i_1\dots i_{N-1}})_{x^{(i_{N-1})}})$ given by 
\begin{gather*}
\gamma^{(i_{N-1})}((S_{i_1\dots i_{N-1}})_{x^{(i_{N-1})}}) =W_{i_1\dots i_{N}} \, \cup \, W_{i_1\dots i_{N-1} L}, \\ W_{i_1 \dots i_N}= f_{i_1\dots i_N}(\stackrel \circ D_1(\nu_{i_1\dots i_N})), \quad W_{i_1 \dots i_{N-1}L}=\mathrm{Int}  (\gamma^{(i_{N-1})}((S_{i_1\dots i_{N-1}})_{x^{(i_{N-1})}})_L),
\end{gather*}
where $f_{i_1\dots i_N}:\nu_{i_1\dots i_N } \rightarrow \gamma^{(i_{N-1})}((S_{i_1\dots i_{N-1}})_{x^{(i_{N-1})}})_{i_N}$ is an invariant tubular neighborhood of the closed invariant submanifold $\gamma^{(i_{N-1})}((S_{i_1\dots i_{N-1}})_{x^{(i_{N-1})}})_{i_N} (H_{i_N})$ in $\gamma^{(i_{N-1})}((S_{i_1\dots i_{N-1}})_{x^{(i_{N-1})}})_{i_N}=\gamma^{(i_{N-1})}((S_{i_1\dots i_{N-1}})_{x^{(i_{N-1})}})$, and 
\bqn 
\gamma^{(i_{N-1})}((S_{i_1\dots i_{N-1}})_{x^{(i_{N-1})}})_L=\gamma^{(i_{N-1})}((S_{i_1\dots i_{N-1}})_{x^{(i_{N-1})}})-f_{i_1\dots i_N}(\stackrel \circ D_{1/2}(\nu_{i_1\dots i_N})).
\eqn
Let $\mklm{\chi_{i_1\dots i_{N}}, \chi_{i_1\dots i_{N-1} L}}$ denote a partition of unity subordinated to the covering by  the open sets $\{W_{i_1\dots i_{N}}, W_{i_1\dots i_{N-1} L}\}$, and decompose $I_{i_1\dots i_{N-1}}(\mu)$ accordingly, so that
\bqn
I_{i_1\dots i_{N-1}}(\mu)= I_{i_1\dots i_N}(\mu) + I_{i_1\dots i_{N-1}L}(\mu) . 
\eqn
 \subsection*{N-th monoidal transformation} In the chart $(-1,1)^{N-1} \times \gamma^{(i_{N-1})}(S^+_{i_1\dots i_{N-1}}) \times \g$ consider the  monoidal transformation 
 \bqn 
 \zeta_{i_1\dots i_N}: B_{Z_{i_1\dots i_N}}((-1,1)^{N-1} \times \gamma^{(i_{N-1})}(S^+_{i_1\dots i_{N-1}})\times \g)\longrightarrow (-1,1)^{N-1} \times \gamma^{(i_{N-1})}(S^+_{i_1\dots i_{N-1}}) \times \g
 \eqn
  with center
 \begin{gather*}
 Z_{i_1\dots i_N}= (-1,1)^{N-1} \times \mathfrak{iso} \,  \Gamma_{i_1\dots i_{N-1}, i_N}^+(H_{i_N}), \\ \Gamma_{i_1\dots i_{N-1}, i_N}=\bigcup _{x^{(i_{N-1})}} \gamma^{(i_{N-1})}((S_{i_1\dots i_{N-1}})_{x^{(i_{N-1})}})_{i_N}=\gamma^{(i_{N-1})}((S_{i_1\dots i_{N-1}})).
 \end{gather*}
 For an arbitrary element $A ^{(i_j)}\in \g_{i_j}^\perp$ one  computes
 \begin{align*}
 (\tilde A ^{i_j)})_{ m^{(i_1\dots i_{N})}}&=\frac d{dt} \e{-t A^{(i_j)}} \cdot  m^{(i_1\dots i_{N})}_{|t=0}= \frac d {dt} \exp_{x^{(i_1)}} \big [(\e{-t A^{(i_j)}})_{\ast, x^{(i_1)}} [\tau_{i_1}  m^{(i_2\dots i_{N})}]\big ]_{|t=0}\\
 &=(\exp_{x^{(i_1)}})_{\ast, \tau_{i_1}  m^{(i_2\dots i_{N})}}[ \lambda(A^{(i_j)})\tau_{i_1}  m^{(i_2\dots i_{N})}],
 \end{align*}
 successively obtaining
  \begin{align*}
 (\tilde A ^{i_j)})_{ m^{(i_1\dots i_{N})}}&= \frac d {dt} \exp_{x^{(i_1)}} \big [\tau_{i_1} \exp_{x^{(i_2)}} [\dots [\tau_{i_{j-1}}  (\e{-t A^{(i_j)}})_{\ast, x^{(i_1)}} m^{(i_j\dots i_{N})}]\dots ]\big ]_{|t=0}\\
 &=(\exp_{x^{(i_1)}})_{\ast, \tau_{i_1}  m^{(i_2\dots i_{N})}} \big [ \tau_{i_1} (\exp_{x^{(i_2)}})_{\ast, \tau_{i_2} m^{(i_3\dots i_N)}}[ \dots [\tau_{i_{j-1}}  \lambda(A^{(i_j)})  m^{(i_j\dots i_{N})}]\dots ]\big ].
 \end{align*}
 As a consequence, the phase function factorizes according to 
\bqn
\, ^{(i_1\dots i_N)} \tilde \psi^{tot}=\mathbb{J}( \eta_{m^{(i_1 \dots i_N)}})( X^{(i_1\dots i_N)})=\tau_{i_1} \cdots \tau_{i_N} \, ^{(i_1\dots i_N)}\tilde \psi^ {wk},
\eqn
where $\eta_{m^{(i_1 \dots i_N)}}  \in \pi^{-1} (m^{(i_1\dots i_N)})$, and  
\begin{align*}
\, ^{(i_1\dots i_N)}\tilde \psi^ {wk}&= \eta_{m^{(i_1 \dots i_N)}} \Big (\widetilde{ A^{(i_1)}}_{m^{(i_1 \dots i_N)}}\Big ) + \sum_{j=2}^N  \eta_{m^{(i_1 \dots i_N)}} \Big ( (\exp_{x^{(i_1)}})_{\ast, \tau_{i_1}  m^{(i_2\dots i_N)} } \\
 & \big [ (\exp_{x^{(i_2)}})_{\ast,\tau_{i_2}  m^{(i_3\dots i_N)}} \big [ \dots (\exp_{x^{(i_{j-1})}})_{\ast, \tau_{i_{j-1}}m^{(i_j\dots i_N)}}[\lambda(A^{(i_j)})  m ^{(i_j\dots i_N)}] \dots \big ]
\big ] \Big ) \\
& +  \eta_{m^{(i_1 \dots i_N)}} \Big ( (\exp_{x^{(i_1)}})_{\ast, \tau_{i_1}  m^{(i_2\dots i_N)} }\big [ (\exp_{x^{(i_2)}})_{\ast,\tau_{i_2}  m^{(i_3\dots i_N)}} \big [ \dots \\
&(\exp_{x^{(i_{N})}})_{\ast, \tau_{i_{N}}\tilde v^{(i_N)}} [\lambda(B^{(i_N)})  \tilde v ^{(i_N)}] \dots \big ]
\big ] \Big )
\end{align*}
in the given charts. With $S_{i_1 \dots i_{N}}$ equal to the sphere bundle over $\gamma^{(i_{N-1})}((S_{i_1\dots i_{N-1}})_{x^{(i_{N-1})}})_{i_N} (H_{i_N})$, one finally obtains for the integral $I_{i_1\dots i_N}(\mu)$ the expression
\begin{gather}
\label{eq:N}
\begin{split}
I_{i_1\dots i_N}(\mu)\qquad \qquad \qquad \qquad\qquad \qquad\qquad \qquad\qquad \qquad \\
 =\int_{M_{i_1}(H_{i_1})\times (-1,1)} \Big [ \int_{\gamma^{(i_1)}((S^+_{i_1})_{x^{(i_1)}})_{i_2}(H_{i_2})\times (-1,1)} \dots \Big [  \int_{\gamma^{(i_{N-1})}((S^+_{i_1\dots i_{N-1}})_{x^{(i_{N-1})}})_{i_{N}}(H_{i_{N}})\times (-1,1)} \\
\Big [ \int_{\gamma^{(i_{N})}((S_{i_1\dots i_{N}}^+)_{x^{(i_N)}})\times \g_{x^{(i_{N})}}\times \g_{x^{(i_{N})}}^\perp \times \cdots \times \g_{x^{(i_{1})}}^\perp\times T^\ast _{m^{(i_1\dots i_N)}}W_{i_1}}  e^{i\frac {\tau_1 \dots \tau_N}\mu \, ^{(i_1\dots i_N)} \tilde \psi ^{wk}}  \, a_{i_1\dots i_N} \,   \tilde \Phi_{i_1\dots i_N} \\
  \d(T^\ast _{m^{(i_1\dots i_N)}}W_{i_1})(\eta)  \d A^{(i_1)} \dots  \d A^{(i_N)}  \d B^{(i_N)} \d \tilde v^{(i_N)} \Big ]  \d \tau_{i_N} \d x^{(i_{N})} \dots  \Big ] \d \tau_{i_2} \d x^{(i_{2})} \Big ]\d \tau_{i_1} \d x^{(i_{1})}.
\end{split}
\end{gather}
Here
\bqn 
a_{i_1\dots i_N}=[ a \, \chi_{i_1}  \circ (\id_{fiber} \otimes \zeta_{i_1} \circ \zeta_{i_1i_2} \circ \dots \circ \zeta_{i_1\dots i_N})] \, [ \chi_{i_1i_2} \circ \zeta_{i_1i_2} \circ \dots\circ \zeta_{i_1\dots i_N}  ] \dots [\chi_{i_1\dots i_N} \circ \zeta_{i_1\dots i_N}]
\eqn
is supposed to have compact support in one of the $\theta^{(i_N)}$-charts, and
\begin{align*}
\tilde \Phi_{i_1\dots i_N} &=\prod_{j=1}^N |\tau_{i_j}|^{c^{(i_j)}+\sum_{r=1}^j d^{(i_r)}-1}\Phi_{i_1\dots i_N},
\end{align*}
where $\Phi_{i_1\dots i_N}$ is a smooth function which does not depend on the variables $\tau_{i_j}$.

\subsection*{N-th reduction} By assumption, $G$ acts on $S_{i_1\dots i_N}$ only with type $(H_L)$, and the iteration process ends here.

\section{Phase analysis of the weak transform. The first fundamental theorem}

We are now in position to state the first fundamental theorem in  the derivation of equivariant spectral asymptotics. For this end, let us  define certain geometric distributions $E^{(i_j)}$ and $F^{(i_N)}$ on $M$ associated to the iteration of $N$ steps along the branch $((H_{i_1}), \dots , (H_{i_{N+1}})=(H_L))$ of the isotropy tree of the $G$-action on $M$ by setting
\begin{equation}
\label{eq:EF}
\begin{split}
E^{(i_1)}_{m^{(i_1 \dots i_N)}}&=\mathrm{Span} \{  \tilde Y _{m^{(i_1 \dots i_N)}}: Y \in \g_{x^{(i_1)}} ^\perp\}, \\ 
E^{(i_j)} _{m^{(i_1 \dots i_N)}}&= (\exp_{x^{(i_1)}})_{\ast, \tau_{i_1}  m^{(i_2\dots i_N)} } \dots  (\exp_{x^{(i_{j-1})}})_{\ast, \tau_{i_{j-1}}m^{(i_j\dots i_N)}}[\lambda(\g_{x^{(i_j)}}^\perp)  m ^{(i_j\dots i_N)}]
, \\
F^{(i_N)}_{m^{(i_1 \dots i_N)}} &=  (\exp_{x^{(i_1)}})_{\ast, \tau_{i_1}  m^{(i_2\dots i_N)} } \dots (\exp_{x^{(i_{N})}})_{\ast, \tau_{i_{N}}\tilde v^{(i_N)}}[\lambda(\g_{x^{(i_N)}})  \tilde v ^{(i_N)}],
\end{split}
\end{equation}
where $2 \leq j \leq N$, the notation being as in the previous section. By construction, for $\tau_{i_j}\not=0$, $1\leq j\leq N$, the $G$-orbit through $m^{(i_1\dots i_N)}$ is of principal type $G/H_L$, which amounts to the fact that $G$ acts on $S_{i_1\dots i_N}$ only with the isotropy type $(H_L)$.  Let $\eta_{m^{(i_1 \dots i_N)}}  \in \pi^{-1} (m^{(i_1\dots i_N)})$. We then have the following

\begin{theorem}
Consider the factorization 
\bqn
\mathbb{J}( \eta_{m^{(i_1 \dots i_N)}})( X^{(i_1\dots i_N)})= \, ^{(i_1\dots i_N)} \tilde \psi^{tot}=
\tau_{i_1} \cdots \tau_{i_N} \, ^{(i_1\dots i_N)}\tilde \psi^ {wk, \, pre}
\eqn
of the phase function $\psi$ after $N$ iteration steps, where $\, ^{(i_1\dots i_N)}\tilde \psi^ {wk,pre}$ is given by
\begin{align*}
&\eta_{m^{(i_1 \dots i_N)}} \Big (\widetilde{ A^{(i_1)}}_{m^{(i_1 \dots i_N)}}\Big ) + \sum_{j=2}^N  \eta_{m^{(i_1 \dots i_N)}} \Big ( (\exp_{x^{(i_1)}})_{\ast, \tau_{i_1}  m^{(i_2\dots i_N)} }\big [ (\exp_{x^{(i_2)}})_{\ast,\tau_{i_2}  m^{(i_3\dots i_N)}} \big [ \dots\\
& (\exp_{x^{(i_{j-1})}})_{\ast, \tau_{i_{j-1}}m^{(i_j\dots i_N)}}[\lambda(A^{(i_j)})  m ^{(i_j\dots i_N)}] \dots \big ]
\big ] \Big )  +  \eta_{m^{(i_1 \dots i_N)}} \Big ( (\exp_{m^{(i_1)}})_{\ast, \tau_{i_1}  m^{(i_2\dots i_N)} }  \\ 
& \big [ (\exp_{x^{(i_2)}})_{\ast,\tau_{i_2}  m^{(i_3\dots i_N)}} \big [ \dots  (\exp_{x^{(i_{N})}})_{\ast, \tau_{i_{N}}\tilde v^{(i_N)}} [\lambda(B^{(i_N)})  \tilde v ^{(i_N)}] \dots \big ] \big ] \Big ), 
\end{align*}
 Let further 
\bqn 
 \, ^{(i_1\dots i_N)}\tilde \psi^ {wk}
\eqn
denote the pullback of  $ \, ^{(i_1\dots i_N)}\tilde \psi^ {wk,\,pre}$ along the  substitution $\tau=\delta_{i_1\dots i_N}(\sigma)$ given by the sequence of monoidal transformations
\begin{align*}
\delta_{i_1\dots i_N}: (\sigma_{i_1}, \dots \sigma_{i_N}) &\mapsto \sigma_{i_1}( 1, \sigma_{i_2}, \dots, \sigma_{i_N})= (\sigma_{i_1}', \dots ,\sigma_{i_N}')\mapsto \sigma_{i_2}'(\sigma_{i_1}',1,\dots, \sigma_{i_N}')= (\sigma_{i_1}'', \dots, \sigma_{i_N}'')\\
 &\mapsto \sigma_{i_3}''(\sigma_{i_1}'',\sigma_{i_2}'', 1,\dots, \sigma_{i_N}'')= \cdots \mapsto \dots = (\tau_{i_1}, \dots ,\tau_{i_N}).
\end{align*}
Then the critical set $\Crit(\, ^{(i_1\dots i_N)} \phw)$ of $\, ^{(i_1\dots i_N)} \phw$ is given by all points
$$(\sigma_{i_1}, \dots, \sigma_{i_N}, x^{(i_1)}, \dots, x^{(i_N)},  \tilde v ^{(i_N)}, A^{(i_1)}, \dots, A^{(i_N)}, B^{(i_N)}, \eta_{m^{(i_1 \dots i_N)}}) $$
satisfying the conditions

\medskip
\begin{tabular}{ll}
\emph{(I)} &  $A^{(i_j)} =0$ for all $j=1,\dots,N$, and $\lambda(  B^{(i_N)})\tilde v^{(i_N)}= 0$; \\[2pt]
\emph{(II)} &  $\eta_{m^{(i_1 \dots i_N)}} \in \mathrm{Ann}\big (E^{(i_j)} _{m^{(i_1 \dots i_N)}}\big )$ for all $j=1,\dots, N$; \\[2pt]
\emph{(III)} &   $ \eta_{m^{(i_1 \dots i_N)}}\in\mathrm{Ann} \big (  F^{(i_N)} _{m^{(i_1 \dots i_N)}}\big )$.
\end{tabular}
\medskip

\noindent
Furthermore,  $\Crit(\, ^{(i_1\dots i_N)} \phw)$ is a $\Cinft$-submanifold of codimension $2\kappa$, where $\kappa=\dim G/H_L$ is the dimension of a principal orbit.
\end{theorem}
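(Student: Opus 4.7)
The plan is to compute the partial derivatives of $\,^{(i_1\dots i_N)}\phw$ with respect to all its independent variables---the $\sigma_{i_j}$, the base points $x^{(i_j)}$, the normal direction $\tilde v^{(i_N)}$, the Lie-algebra variables $A^{(i_j)}\in\g_{x^{(i_j)}}^\perp$ and $B^{(i_N)}\in\g_{x^{(i_N)}}$, and the cotangent fiber variable $\eta=\eta_{m^{(i_1\dots i_N)}}$---and to read off the geometric content of the equations. The structural observations that make this tractable are that $\,^{(i_1\dots i_N)}\phw$ depends \emph{linearly} on $\eta$, and that each of its summands depends linearly on exactly one of the Lie-algebra variables, so most of the analysis reduces to elementary pairings.

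First I would differentiate in the coefficients $\alpha^{(i_j)}_r$ of the basis vectors $A^{(i_j)}_r$. The explicit formula for $\,^{(i_1\dots i_N)}\phw$ shows that this derivative equals the pairing of $\eta_{m^{(i_1\dots i_N)}}$ with a basis vector of $E^{(i_j)}_{m^{(i_1\dots i_N)}}$; varying over $r$ and $j$ then yields exactly condition (II). An analogous computation in $\beta^{(i_N)}_r$ gives condition (III).

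Next, the derivative in $\eta$. Because $\,^{(i_1\dots i_N)}\tilde\psi^{tot}=\eta(\tilde{X^{(i_1\dots i_N)}}_{m^{(i_1\dots i_N)}})$ and $\,^{(i_1\dots i_N)}\tilde\psi^{tot}=\tau_{i_1}\cdots \tau_{i_N}\, ^{(i_1\dots i_N)}\phw$, one has $\partial_\eta \, ^{(i_1\dots i_N)}\phw=\tilde{X^{(i_1\dots i_N)}}_{m^{(i_1\dots i_N)}}/(\tau_{i_1}\cdots \tau_{i_N})$ on the locus where all $\tau_{i_j}\neq 0$, while the sum-of-$V_j$ expression for $\,^{(i_1\dots i_N)}\phw$ provides the smooth extension of this vector field across the exceptional divisors. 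Its vanishing therefore forces $\tilde{X^{(i_1\dots i_N)}}_{m^{(i_1\dots i_N)}}=0$ on the dense open set of principal orbit type, and by continuity on the full critical locus, i.e.\ $X^{(i_1\dots i_N)}\in \g_{m^{(i_1\dots i_N)}}=\mathrm{Lie}(H_L)$. Combined with the orthogonal decomposition $\g=\g_{x^{(i_N)}}\oplus \g_{x^{(i_N)}}^\perp\oplus \cdots \oplus \g_{x^{(i_1)}}^\perp$ and the definition of $X^{(i_1\dots i_N)}$, this forces successively $A^{(i_j)}=0$ for every $j$, and $B^{(i_N)}\in \mathrm{Lie}(H_L)\subset\g_{x^{(i_N)}}$, equivalently $\lambda(B^{(i_N)})\tilde v^{(i_N)}=0$, since $G$ acts on $S_{i_1\dots i_N}$ only with principal type $(H_L)$; this is condition (I). The remaining derivatives in $\sigma_{i_j}$, $x^{(i_j)}$ and $\tilde v^{(i_N)}$ are then automatically zero, since under (II) and (III) every summand of $\,^{(i_1\dots i_N)}\phw$ has the form $\eta(V)$ with $V$ lying in $E^{(i_j)}$ or $F^{(i_N)}$, and the $G$-equivariance of the tubular neighborhoods, of the exponential maps, and of the chosen metrics ensures that this form is preserved under infinitesimal variations of the base variables.

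To finish, I would verify smoothness and the codimension. Once (I)--(III) hold, the defining equations are polynomial in the Lie-algebra variables and linear in $\eta$, and their differentials are linearly independent by the principal-orbit assumption on $S_{i_1\dots i_N}$, so the critical set is a $\Cinft$-submanifold. For the dimension count, the $A$-conditions in (I) contribute codimension $\sum_{j=1}^N d^{(i_j)}=\dim\g-\dim\g_{x^{(i_N)}}$; the condition $\lambda(B^{(i_N)})\tilde v^{(i_N)}=0$, which on the principal stratum forces $B^{(i_N)}\in \mathrm{Lie}(H_L)$, adds $\dim\g_{x^{(i_N)}}-\dim H_L$; and the annihilator conditions (II)--(III) cut out a subspace of codimension $\dim(E^{(i_1)}+\cdots+E^{(i_N)}+F^{(i_N)})=\dim T_{m^{(i_1\dots i_N)}}(G\cdot m^{(i_1\dots i_N)})=\kappa$. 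Adding yields $(\dim\g-\dim H_L)+\kappa=2\kappa$, as claimed. The main difficulty I anticipate is the extension of the critical-point analysis across the exceptional divisors $\sigma_{i_j}=0$ introduced by $\delta_{i_1\dots i_N}$: these are precisely where the division by $\tau_{i_1}\cdots \tau_{i_N}$ is only defined via the explicit smooth formula for $\,^{(i_1\dots i_N)}\phw$, so one must check that no spurious critical points appear along them and that the transversality above persists there, so that the global critical locus is genuinely a smooth submanifold of the announced codimension.
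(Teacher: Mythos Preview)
Your overall strategy---differentiate in the fibre variable $\eta$, the Lie-algebra variables $\alpha^{(i_j)},\beta^{(i_N)}$, and then argue that the remaining base derivatives vanish automatically---is the same as the paper's. The computations of $\partial_{\alpha}\,^{(i_1\dots i_N)}\phw$ and $\partial_{\beta}\,^{(i_1\dots i_N)}\phw$ leading to (II) and (III) are correct and match the paper. There are, however, two genuine gaps.

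\textbf{The $\eta$-derivative at the exceptional locus.} Your derivation of (I) from $\partial_\eta\,^{(i_1\dots i_N)}\phw=0$ is incomplete precisely at the place you flag. When some $\sigma_{i_j}=0$, the substitution $\delta_{i_1\dots i_N}$ forces \emph{all} $\tau_{i_j}=0$, so $m^{(i_1\dots i_N)}=x^{(i_1)}$ and $X^{(i_1\dots i_N)}=B^{(i_N)}\in\g_{x^{(i_N)}}\subset\g_{x^{(i_1)}}$. The condition $\tilde{X}^{(i_1\dots i_N)}_{m^{(i_1\dots i_N)}}=0$ is then automatic and gives no information; your continuity argument therefore cannot recover (I) on the boundary. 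What is needed is to evaluate the explicit formula for $\partial_\eta\,^{(i_1\dots i_N)}\phw$ at $\tau=0$: it becomes a sum $\widetilde{A^{(i_1)}}_{x^{(i_1)}}+\sum_{j\ge 2}\lambda(A^{(i_j)})x^{(i_j)}+\lambda(B^{(i_N)})\tilde v^{(i_N)}$, and the key point is that these summands live in \emph{mutually complementary} subspaces $T_{x^{(i_j)}}(G_{x^{(i_{j-1})}}\cdot x^{(i_j)})$ of $T_{x^{(i_1)}}M$, built from the nested normal slices. Only this direct-sum structure forces each summand to vanish separately, yielding (I). This is the substantive step you are missing.

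\textbf{The base derivatives.} Your claim that under (II) and (III) the derivatives in $\sigma_{i_j},x^{(i_j)},\tilde v^{(i_N)}$ vanish because ``every summand has the form $\eta(V)$ with $V\in E^{(i_j)}$ or $F^{(i_N)}$'' does not work as written: the distributions $E^{(i_j)},F^{(i_N)}$ themselves depend on these base variables, so differentiating produces vectors that need not remain in them. The paper avoids computing these derivatives directly. Instead it observes that (I) forces $\,^{(i_1\dots i_N)}\phw=0$ on its critical set, whence $\Crit(\,^{(i_1\dots i_N)}\phw)\subset\Crit(\,^{(i_1\dots i_N)}\tilde\psi^{tot})$; on the open dense set $\sigma_{i_1}\cdots\sigma_{i_N}\neq 0$ the two critical sets coincide and are described by (I)--(III); and since (I)--(III) are closed conditions, the closure argument gives $\Crit(\,^{(i_1\dots i_N)}\phw)=\overline{\Crit(\,^{(i_1\dots i_N)}\tilde\psi^{tot})_{\sigma\neq 0}}$. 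This is how sufficiency of (I)--(III) is actually established.

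Your codimension count is correct in outline, but the paper secures smoothness by exhibiting $\Crit(\,^{(i_1\dots i_N)}\phw)$ as the fibre product of two explicit vector bundles (the annihilator bundle of $\bigoplus E^{(i_j)}\oplus F^{(i_N)}$ and the isotropy-algebra bundle $\g_{\tilde v^{(i_N)}}$) over the base $(\sigma_{i_j},x^{(i_j)},\tilde v^{(i_N)})$; the constancy of the fibre dimensions, which uses the principal-type assumption on $S_{i_1\dots i_N}$, then gives smoothness without checking transversality of differentials by hand.
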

\begin{proof}
 To begin with, let  $\sigma_{i_1} \cdots \sigma_{i_N}\not=0$. In this case, the sequence of monoidal transformations $\zeta=\zeta_{i_1} \circ \zeta_{i_1i_2} \circ \dots \circ \zeta_{i_1\dots i_N}\circ \delta_{i_1\dots i_N}$ constitutes a diffeomorphism, so that 
 \begin{align*}
 \mathrm{Crit}(\, ^{(i_1\dots i_N)} \psi^{tot})_{\sigma_{i_1} \cdots \sigma_{i_N}\not=0}=\{&(\sigma_{i_1}, \dots, \sigma_{i_N}, x^{(i_1)}, \dots, x^{(i_N)},  \tilde v ^{(i_N)}, A^{(i_1)}, \dots, A^{(i_N)}, B^{(i_N)}, \eta_{m^{(i_1 \dots i_N)}})  \\ & \in \tilde {\mathcal{C}}^{tot}, \quad  {\sigma_{i_1} \cdots \sigma_{i_N}\not=0}  \},
  \end{align*}
   where $\tilde {\mathcal{C}}^{tot}=((\zeta \otimes \id_{fiber})^{-1}(\Crit{(\psi)})$  denotes the total transform of the critical set of $\psi$. Now, 
 \bqn
 (\eta_{m^{(i_1 \dots i_N)}},X^{(i_1\dots i_N)})\in \mathrm{Crit}(\psi) \quad \Leftrightarrow \quad  \eta_{m^{(i_1 \dots i_N)}} \in \Omega,  \quad \tilde X^{(i_1\dots i_N)}_{\eta_{m^{(i_1 \dots i_N)}}}=0.
 \eqn
Furthermore, $\tilde X_\eta=0$ clearly implies $\tilde X_{\pi(\eta)} =\pi_{\ast}(\tilde X_\eta)=0$. 
Since the point $m^{(i_1\dots i_N)}$ lies in a slice at $x^{(i_1)}$, the condition $\tilde X^{(i_1\dots i_N)}_{m^{(i_1\dots i_N)}}=0$ means that 
the vector field  $\tilde X^{(i_1\dots i_N)}$ must vanish at $x^{(i_1)}$ as well. But
\bqn
\g_m=\mathrm{Lie}{(G_m}) =\mklm{X \in \g: \tilde X_m=0},
\eqn
so that $X^{(i_1\dots i_N)} \in \g_{x^{(i_1)}}$. Next
\begin{align*}
\g_{x^{(i_N)}} \subset \g_{x^{(i_{N-1})}} \subset \dots \subset \g_{x^{(i_1)}}
\end{align*}
and $\g_{x^{(i_{j+1})}}^\perp \subset \g_{x^{(i_j)}}$ imply
\bqn
\tilde X^{(i_1\dots i_N)}_{x^{(i_1)}}= {\tau_{i_1} \dots \tau_{i_N}\sum \alpha_r^{(i_1)} (\tilde A_r ^{(i_1)}})_{ x^{(i_1)}} =0.
\eqn
Thus we conclude $\alpha^{(i_1)} =0$, which gives $X^{(i_2\dots i_N)} \in \g_{m^{(i_1\dots i_N)}}$, and consequently $X^{(i_2\dots i_N)} \in \g_{m^{(i_2\dots i_N)}}$. Repeating the above argument we actually obtain for $\sigma_{i_j}\not=0$
\bq
\label{eq:G}
\g_{m^{(i_1 \dots i_N)}} = \g _{\tilde v ^{(i_N)}},
\eq
since $\g_{\tilde v ^{(i_N)}} \subset \g_{x ^{(i_N)}}$. Therefore  the condition $\tilde X^{(i_1\dots i_N)}_{m^{(i_1\dots i_N)}}=0$ is equivalent to (I) in the case that all $\sigma_{i_j}$ are different from zero. Now, $ \eta_{m^{(i_1 \dots i_N)}} \in \Omega$ means that 
\bqn
\mathbb{J}(\eta_{m^{(i_1 \dots i_N)}})(X)=\eta_{m^{(i_1 \dots i_N)}} (\tilde X_{m^{(i_1 \dots i_N)}}) =0 \qquad \forall X \in \g,
\eqn
which is equivalent to  $ \eta_{m^{(i_1\dots i_ N)}} \in \mathrm{Ann}(T_{m^{(i_1\dots i_ N)}} (G \cdot {m^{(i_1\dots i_ N)}}))$. If $\sigma_{i_j}\not=0$ for all $j=1,\dots,N$, (II) and (III) imply that 
\bqn 
\eta_{m^{(i_1\dots i_ N)}}\Big ( (\exp_{x^{(i_1)}})_{\ast, \tau_{i_1}  m^{(i_2\dots i_N)} } [\dots  (\exp_{x^{(i_{j-1})}})_{\ast, \tau_{i_{N-1}}m^{(i_N)}}[\lambda(Z)  m ^{(i_N)}] \dots \big ] \Big )=0 \quad \forall Z \in \g_{x^{(i_{N-1})}},
\eqn
since $ \g_{x^{(i_{N-1})}} =  \g_{x^{(i_{N})}}\oplus  \g_{x^{(i_{N})}}^\perp$. By repeatedly using this argument, we  conclude  that for $\sigma_{i_j}\not= 0$
\bq
\label{eq:IVb}
\mathrm{(II), \, (III)} \quad \Leftrightarrow \quad \eta_{m^{(i_1 \dots i_N)}} \in \mathrm{Ann}(T_{m^{(i_1\dots i_ N)}} (G \cdot {m^{(i_1\dots i_ N)}})).
\eq
Taking everything together therefore gives
\begin{align}
\begin{split}
\label{eq:XX}
\mathrm{Crit}(\, ^{(i_1\dots i_N)}& \psi^{tot})_{\sigma_{i_1}\cdots \sigma_{i_N}\not=0}\\ 
 &=\{(\sigma_{i_1}, \dots, \sigma_{i_N}, x^{(i_1)}, \dots, x^{(i_N)},  \tilde v ^{(i_N)}, A^{(i_1)}, \dots, A^{(i_N)}, B^{(i_N)}, \eta_{m^{(i_1 \dots i_N)}}):  \\    &{\sigma_{i_1} \cdots \sigma_{i_N}\not=0}, \, \text{(I)-(III) are fulfilled and $ \tilde B^{(i_N),\mathrm{v}}_{\eta_{m^{(i_1 \dots i_N)}}}=0$}   \}.
\end{split}
  \end{align}  
  Here $\mathfrak{X}_\eta^{\mathrm{v}}$ denotes the vertical component of  a vector field $\mathfrak{X} \in T(T^\ast M)$ with respect to the decomposition $T_\eta(T^\ast M)=T^{\mathrm{v}} \oplus T^{\mathrm{h}}$, $T^{\mathrm{v}}$ being  the tangent space to the fiber $T_\eta^\ast M$ at zero, and $T^{\mathrm{h}}$ the tangent space to the zero section $M \subset T^\ast M$ at $\eta$. We now assert that   
$$ \mathrm{Crit}(\, ^{(i_1\dots i_N)} \psi^{wk})=\overline{\mathrm{Crit}(\, ^{(i_1\dots i_N)} \psi^{tot})_{\sigma_{i_1} \cdots \sigma_{i_N}\not=0}}.$$
To show this, let us write  $\eta_{m^{(i_1 \dots i_N)}} =\sum p_i \d q_i$ with respect to some local coordinates $q_1,\dots, q_n$, and still assume that all $\sigma_{i_j}$ are different from zero. Then all $\tau_{i_j}$ are different from zero, too, and $\gd_p \, ^{(i_1\dots i_N)} \phw=0$ is equivalent to
\begin{gather*}
 \gd _p \mathbb{J}(\eta_{m^{(i_1 \dots i_N)}})( X^{(i_1\dots i_N)})= ( \d q_1 (\tilde X^{(i_1\dots i_N)}_{m^{(i_1\dots i_N)}}), \dots, \d q_n (\tilde X^{(i_1\dots i_N)}_{m^{(i_1\dots i_N)}}))=0,
\end{gather*}
which gives us the condition $\tilde X^{(i_1\dots i_N)}_{m^{(i_1\dots i_N)}}=0$. By \eqref{eq:G} we therefore obtain condition I) in the case that all $\sigma_{i_j}$ are different from zero. Let now one of the  $\sigma_{i_j}$ be equal to zero, so that  all $\tau_{i_j}$ are zero. With the identification $T_0(T_mM) \simeq T_mM$  one has 
\bqn 
(\exp_m)_{\ast,0}: T_0(T_mM) \longrightarrow T_mM, \qquad  (\exp_m)_{\ast,0}\simeq \id,
\eqn
and similarly $(\exp_{x^{(i_j)}})_{\ast,0}\simeq \id$ for all $j=2,\dots, N$, so that
\begin{align}
\label{eq:B}
\, ^{(i_1\dots i_N)}\tilde \psi^ {wk}&= \sum p_i \, dq_i \Big (\widetilde{ A^{(i_1)}}_{x^{(i_1)}} + \sum_{j=2}^N  \lambda(A^{(i_j)})  x ^{(i_j)} + \lambda(B^{(i_N)})  \tilde v ^{(i_N)}  \Big ).
\end{align}
Therefore $\gd_p \, ^{(i_1\dots i_N)} \phw=0$ is equivalent to
\bqn
\widetilde{ A^{(i_1)}}_{x^{(i_1)}} + \sum_{j=2}^N  \lambda(A^{(i_j)})  x^{(i_j)} + \lambda(B^{(i_N)})  \tilde v ^{(i_N)}=0.
\eqn
Now,  let $N_{x^{(i_1)}} ( G \cdot x^{(i_1)})$ be the normal space in $T_{x^{(i_1)}}M$ to the orbit $G \cdot x^{(i_1)}$, on which $G_{x^{(i_1)}}$ acts, and define  $N_{x^{(i_{j+1})}} ( G_{x^{(i_{j})}} \cdot x^{(i_{j+1})})$ successively as the normal space to the orbit $ G_{x^{(i_{j})}} \cdot x^{(i_{j+1})}$  in the $ G_{x^{(i_{j})}}$-space $N_{x^{(i_j)}} ( G_{x^{(i_{j-1})}} \cdot x^{(i_j)})$, where we understand that $G_{x^{(i_0)}}=G$. By Bredon \cite{bredon}, page 308, these actions can be assumed to be orthogonal. Set  
\bq
\label{eq:V}
V^{(i_1\dots i_{j})}= \bigcap_{r=1} ^{j} N_{x^{(i_r)}} ( G_{x^{(i_{r-1})}} \cdot x^{(i_r)})= N_{x^{(i_j)}} ( G_{x^{(i_{j-1})}} \cdot x^{(i_j)}).
\eq
Since $x^{(i_j)} \in \gamma^{(i_{j-1})} (S^+_{i_1\dots i_{j-1}})_{x^{(i_{j-1})}})\subset V^{(i_1\dots i_{j-1})}$, we see that for every $j=2,\dots, N$
\bqn 
\lambda\Big (\sum_r \alpha_r^{(i_j)} A_r^{(i_j)} \Big ) \, x^{(i_j)} \in   T_{x^{(i_j)}} ( G_{x^{(i_{j-1})}} \cdot x^{(i_j)})\subset V^{(i_1\dots i_{j-1})} .
\eqn 
In addition, $(\tilde  A_r^{(i_1)})_{x^{(i_1)}}\in   T_{x^{(i_1)}} ( G \cdot x^{(i_1)})$, and
$
\lambda\Big (\sum_r \beta^{(i_N)} _r B_r^{(i_N)}\Big ) \tilde v ^{(i_N)} \in V^{(i_1\dots i_{N})}
$, so that taking everything together we obtain for arbitrary $\sigma_{i_j}$
\bqn
\gd_p \, ^{(i_1\dots i_N)} \phw=0 \quad \Longleftrightarrow \quad \mathrm{(I)}.
\eqn
 In particular, one concludes that $\, ^{(i_1\dots i_N)} \phw$ must vanish on its critical set. Since 
\bqn 
d(\, ^{(i_1\dots i_N)} \psi^{tot})= d(\tau_{i_1}\dots \tau_{i_N}) \cdot \, ^{(i_1\dots i_N)} \psi^{wk} +  \tau_{i_1}\dots \tau_{i_N} d\,( ^{(i_1\dots i_N)} \psi^{wk}),
\eqn
one sees that 
\bqn 
 \mathrm{Crit}(\, ^{(i_1\dots i_N)} \psi^{wk})\subset \mathrm{Crit}(\, ^{(i_1\dots i_N)} \psi^{tot}).
\eqn
In turn, the vanishing of $\psi$ on its critical set implies
\bqn 
 \mathrm{Crit}(\, ^{(i_1\dots i_N)} \psi^{wk})_{\sigma_{i_1}\dots \sigma_{i_N}\not=0}=  \mathrm{Crit}(\, ^{(i_1\dots i_N)} \psi^{tot})_{\sigma_{i_1}\dots \sigma_{i_N}\not=0}.
\eqn
Therefore, by continuity, 
\bq
\label{eq:XXbis}
\overline{\mathrm{Crit}(\, ^{(i_1\dots i_N)} \psi^{tot})_{\sigma_{i_1}\dots \sigma_{i_N}\not=0}} \subset  \mathrm{Crit}(\, ^{(i_1\dots i_N)} \psi^{wk}).
\eq
In order to see the converse inclusion, let us consider next the $\alpha$-derivatives. Clearly,  
\begin{align*}
\gd_{\alpha^{(i_1)}} \, ^{(i_1\dots i_N)} \phw=0 \quad & \Longleftrightarrow \quad \eta_{m^{(i_1 \dots i_N)}}(\tilde Y _{m^{(i_1 \dots i_N)}})=0 \quad \forall \, Y \in \g_{x^{(i_1)}}^\perp.
\end{align*}
For the remaining  derivatives one computes 
\begin{gather*}
\gd_{\alpha_r^{(i_j)}} \, ^{(i_1\dots i_N)} \phw \\ =\eta_{m^{(i_1 \dots i_N)}} \Big ( (\exp_{x^{(i_1)}})_{\ast, \tau_{i_1}  m^{(i_2\dots i_N)} }\big [ \dots (\exp_{x^{(i_{j-1})}})_{\ast, \tau_{i_{j-1}}m^{(i_j\dots i_N)}}[\lambda(A^{(i_j)}_r)  m ^{(i_j\dots i_N)}] \dots \big ] \Big ), 
\end{gather*}
from which one deduces  that for $j=2,\dots, N$
\begin{align*}
\gd_{\alpha^{(i_j)}} \, ^{(i_1\dots i_N)} \phw=0 \quad  \Longleftrightarrow   \quad \forall \, Y \in &\g_{x^{(i_j)}}^\perp \\ 
\eta_{m^{(i_1 \dots i_N)}} \Big ( (\exp_{x^{(i_1)}})_{\ast, \tau_{i_1}  m^{(i_2\dots i_N)} } \big [\dots  (\exp_{x^{(i_{j-1})}})_{\ast, \tau_{i_{j-1}}m^{(i_j\dots i_N)}}&[\lambda(Y)  m ^{(i_j\dots i_N)}] \dots \big ] \Big )=0.
\end{align*}
In a similar way, it is not difficult to see that 
\begin{align*}
\gd_{\beta^{(i_j)}} \, ^{(i_1\dots i_N)} \phw=0 \quad  \Longleftrightarrow   \quad \forall \, Z \in &\g_{x^{(i_N)}} \\ 
\eta_{m^{(i_1 \dots i_N)}} \Big ( (\exp_{x^{(i_1)}})_{\ast, \tau_{i_1}  m^{(i_2\dots i_N)} } \big [\dots  (\exp_{x^{(i_{N})}})_{\ast, \tau_{i_{N}}\tilde v^{(i_N)}}&[\lambda(Z)  \tilde v ^{(i_N)}] \dots \big ] \Big )=0.
\end{align*}
by which the necessity of the conditions (I)--(III) is established. In order to see their sufficiency, let them be fulfilled, and assume again that $\sigma_{i_j}\not=0$ for all $j=1,\dots,N$. Then \eqref{eq:IVb} implies that  $ \eta_{m^{(i_1 \dots i_N)}} \in \mathrm{Ann}(T_{m^{(i_1\dots i_ N)}} (G \cdot {m^{(i_1\dots i_ N)}}))$. Now, if $\sigma_{i_j}\not=0$, $G  \cdot m^{(i_1\dots i_N)}$ is of principal type $G/ H_L$ in $M$, so that the isotropy group of $m^{(i_1\dots i_N)}$ must act trivially on $N_{m^{(i_1\dots i_N)}}(G\cdot m^{(i_1\dots i_N)})$, compare Bredon \cite{bredon}, page 181. If therefore $ \mathfrak{X}= \mathfrak{X}_T+ \mathfrak{X}_N$ denotes an arbitrary element in $ T_{m^{(i_1\dots i_N)}}M = T_{m^{(i_1\dots i_ N)}} (G \cdot {m^{(i_1\dots i_ N)}}))\oplus N_{m^{(i_1\dots i_ N)}} (G \cdot {m^{(i_1\dots i_ N)}}))$, and $g \in G_{m^{(i_1\dots i_N)}}$, one computes 
\begin{align*} 
g \cdot \eta_{m^{(i_1\dots i_N)}}( \mathfrak{X})&= [ ( L_{g^{-1}})^\ast _{gm^{(i_1\dots i_N)}} \eta_{m^{(i_1\dots i_N)}}]( \mathfrak{X})= \eta_{m^{(i_1\dots i_N)}} ( ( L_{g^{-1}}) _{\ast, m^{(i_1\dots i_N)}}( \mathfrak{X}_N))\\
&= \eta_{m^{(i_1\dots i_N)}} (  \mathfrak{X}_N)= \eta_{m^{(i_1\dots i_N)}} (  \mathfrak{X}).
\end{align*}
In view of \eqref{eq:G}, and $\lambda( B^{(i_N)})\tilde v^{(i_N)}=0$ we therefore get the condition $\tilde B^{(i_N), \mathrm{v}}_{\eta_{m^{(i_1\dots i_N)}}}=0$.  Let us now assume that one of the $\sigma_{i_j}$ equals zero. Then  
\begin{align}
\label{eq:VII}
\mathrm{(II), \, (III)} \quad \Leftrightarrow \quad & \left \{
\begin{array}{l}
\eta_{x^{(i_1)}} \in \mathrm{Ann}(T_{x^{(i_j)}} (G_{x^{(i_{j-1})}} \cdot {x^{(i_j)}}))  \quad \forall \, j=1, \dots, N, \\
\eta_{x^{(i_1)}} \in \mathrm{Ann}(T_{\tilde v^{(i_N)}} (G_{x^{(i_{N})}} \cdot {\tilde v^{(i_N)}})).
\end{array} \right.
\end{align}
\begin{lemma}
The orbit of the point $\tilde v^{(i_N)}$ in the $G_{x^{(i_N)}}$-space $V^{(i_1\dots i_N)}$ is of principal type.
\end{lemma}
\begin{proof}[Proof of the lemma]
By assumption, for  $\sigma_{i_j}\not=0$,  $1 \leq j \leq  N$, the $G$-orbit of $m^{(i_1\dots i_N)}$ is of principal type $G/ H_L$ in $M$. The theory of compact group actions then implies that this is equivalent to the fact that $m^{(i_2 \dots i_N)} \in V^{(i_1)}$ is of principal type in the $G_{x^{(i_1)}}$-space $V^{(i_1)}$, see Bredon \cite{bredon}, page 181, which in turn is equivalent to the fact that $m^{(i_3 \dots i_N)} \in V^{(i_1i_2)}$ is of principal type in the $G_{x^{(i_2)}}$-space $V^{(i_1i_2)}$, and so forth. Thus, $m^{(i_j \dots i_N)} \in V^{(i_1 \dots i_{j-1})}$ must be of principal type in the $G_{x^{(i_{j-1})}}$-space $V^{(i_1\dots i_{j-1})}$ for all $j=1,\dots N$, and the assertion follows.
\end{proof}
As a consequence of the previous lemma,  the stabilizer of $\tilde v^{(i_N)} $  must act trivially on $N_{\tilde v^{(i_N)}} ( G_{x^{(i_N)}} \cdot\tilde v^{(i_N)} )$. If therefore  $ \mathfrak{X}= \mathfrak{X}_{T}+  \mathfrak{X}_N$ denotes an arbitrary element in 
\begin{align*}
T_{x^{(i_1)}} M &= T_{x^{(i_1)}} (G \cdot {x^{(i_1)}})\oplus N_{x^{(i_1)}} (G \cdot {x^{(i_1)}})
\\& = \bigoplus_{j=1}^N T_{x^{(i_j)}} (G_{x^{(i_{j-1})}} \cdot {x^{(i_j)}}) \oplus  T_{\tilde v^{(i_N)}} (G_{x^{(i_{N})}}\cdot \tilde v^{(i_N)}) \oplus  N_{\tilde v^{(i_N)}} (G_{x^{(i_{N})}}\cdot \tilde v^{(i_N)}),
\end{align*}
 we obtain with \eqref{eq:VII} 
\begin{align*} 
g \cdot \eta_{x^{(i_1)}}( \mathfrak{X})&= [ ( L_{g^{-1}})^\ast _{gx^{(i_1)}} \eta_{x^{(i_1)}}]( \mathfrak{X})= \eta_{x^{(i_1)}} ( ( L_{g^{-1}}) _{\ast, x^{(i_1)}}( \mathfrak{X}_N))\\
&= \eta_{x^{(i_1)}} (  \mathfrak{X}_N)= \eta_{x^{(i_1)}} (  \mathfrak{X}), \qquad  g \in G_{\tilde v^{(i_N)}}.
\end{align*}
Collecting everything together we have shown for arbitrary $\sigma_{i_j}$ that 
\begin{align}
\label{eq:IV}
 \gd_{p, \alpha^{(i_1)}, \dots , \alpha^{(i_N)} , \beta^{(i_N)}} \, ^{(i_1\dots i_N)} \phw=0 \quad \Longleftrightarrow \quad \mathrm{(I), \,(II), \,(III)} \quad & \Longrightarrow \quad \tilde B^{(i_N),\mathrm{v}}_{\eta_{m^{(i_1\dots i_N)}}}=0.
\end{align}
By \eqref{eq:XX} and \eqref{eq:XXbis} we therefore conclude 
\bq
\label{eq:CC}
\overline{\mathrm{Crit}(\, ^{(i_1\dots i_N)} \psi^{tot})_{\sigma_{i_1}\dots \sigma_{i_N}\not=0}} =  \mathrm{Crit}(\, ^{(i_1\dots i_N)} \psi^{wk}).
\eq
Thus we have computed the critical set of $\, ^{(i_1\dots i_N)} \phw$, and it remains to show that it is a $\Cinft$-submanifold of codimension $2\kappa$. For this end, let us note that if  $\sigma_{i_j}=0$ for some $j$, then $E^{(i_1)}_{x^{(i_1)}}  = T_{x^{(i_1)}}(G\cdot x^{(i_1)})$, and  
\bqn 
E^{(i_j)}_{x^{(i_1)}} \equiv T_{x ^{(i_j)}}(G_{x^{(i_{j-1})}} \cdot x ^{(i_j)}) \subset  V^{(i_{1} \dots i_{j-1})}, \qquad 2 \leq j \leq N, 
\eqn
while $F^{(i_N)}_{x^{(i_1)}}\equiv T_{\tilde v^{(i_N)}} (G_{x^{(i_{N})}}\cdot \tilde v^{(i_N)})  \subset V^{(i_1 \dots i_{N})}$.  Therefore $E^{(i_j)}_{x^{(i_1)}} \cap V^{(i_1 \dots i_{j})}=\mklm{0}$, so that we obtain the direct sum of vector spaces
\bqn 
E_{x^{(i_1)}}^{(i_1)}\oplus E_{x^{(i_1)}}^{(i_2)}\oplus \dots  \oplus E_{x^{(i_1)}}^{(i_N)}\oplus F_{x^{(i_1)}}^{(i_N)} \subset T_{x^{(i_1)}}M.
\eqn 
On the other hand, note that if $\sigma_{i_1}\cdots \sigma_{i_N}\not=0$ one has
\begin{align*}
T_{m^{(i_1 \dots i_N)}} ( G\cdot m^{(i_1 \dots i_N)})=E^{(i_1)}_{m^{(i_1 \dots i_N)}}\oplus  \bigoplus _{j=2}^N \tau_{i_1}\dots \tau_{i_{j-1}} E^{(i_j)} _{m^{(i_1 \dots i_N)}}  \oplus  \tau_{i_1}\dots \tau_{i_N} F^{(i_N)}_{m^{(i_1 \dots i_N)}} 
\end{align*}
for dimensional reasons, so that we obtain the direct sum of geometric distributions $\sum_{j=1}^N E^{(i_j)} \oplus F^{(i_N)}$. 
Consequently, we arrive at the characterization
\begin{align}
\label{eq:C}
\begin{split}
&\Crit(\, ^{(i_1\dots i_N)} \phw)\\ 
= \Big \{ A^{(i_j)}=0, \quad \lambda(B^{(i_N)}) \tilde v ^{(i_N)}&=0, \quad  \eta_{m^{(i_1\dots i_N)}}\in \mathrm{Ann} \Big(\bigoplus_{j=1}^N E^{(i_j)}_{m^{(i_1\dots i_N)}}\oplus F^{(i_N)}_{m^{(i_1\dots i_N)}}\Big ) \Big \}.
\end{split}
\end{align}
Note that the condition $\tilde  B^{(i_N), \mathrm{v}}_{\eta_{m^{(i_1\dots i_N)}}}  =0$ is already implied by the others. Now, $
\dim E^{(i_j)}_{m^{(i_1\dots i_N)}} = \dim G_{x^{(i_{j-1})}} \cdot x ^{(i_j )}$.
Since for  $\sigma_{i_1}\cdots \sigma_{i_N}\not=0$ the $G$-orbit of $m^{(i_1\dots i_N)}$ is of principal type $G/ H_L$ in $M$, one computes in this case
\begin{align*}
\kappa =& \dim G \cdot m^{(i_1\dots i_N)}= \dim T_{m^{(i_1 \dots i_N)}} ( G\cdot m^{(i_1 \dots i_N)})\\
=&\dim [E^{(i_1)}_{m^{(i_1 \dots i_N)}}\oplus  \bigoplus _{j=2}^N \tau_{i_1}\dots \tau_{i_{j-1}} E^{(i_j)} _{m^{(i_1 \dots i_N)}}  \oplus  \tau_{i_1}\dots \tau_{i_N} F^{(i_N)}_{m^{(i_1 \dots i_N)}}] \\
 =& \sum_{j=1} ^N \dim E^{(i_j)}_{m^{(i_1\dots i_N)}} + \dim F^{(i_N)}_{m^{(i_1\dots i_N)}}.
\end{align*}
But since the dimension of the spaces $E^{(i_j)}_{m^{(i_1 \dots i_N)}}$ and $F^{(i_N)}_{m^{(i_1 \dots i_N)}}$  does not depend on the variables $\sigma_{i_j}$, we obtain the equality 
\bq
\label{eq:kappa}
\kappa=\sum_{j=1} ^N \dim E^{(i_j)} _{m^{(i_1 \dots i_N)}}+ \dim F^{(i_N)}_{m^{(i_1 \dots i_N)}}
\eq
for arbitrary ${m^{(i_1 \dots i_N)}}$. Note that, in contrast, the dimension of  $T_{m^{(i_1 \dots i_N)}} ( G\cdot m^{(i_1 \dots i_N)})$ collapses, as soon as one of the $\tau_{i_j}$ becomes zero. Since the annihilator of a subspace of $T_m M$ is itself a linear subspace of $T^\ast_m M$,  we arrive at a vector bundle with $(n-\kappa)$-dimensional fiber that is locally given by the trivialization 
\bqn 
\Big (\sigma_{i_j}, x^{(i_j)},  \tilde v ^{(i_N)}, \mathrm{Ann}  \big(\bigoplus _{j=1}^N  E^{(i_j)} _{m^{(i_1 \dots i_N)}}  \oplus  F^{(i_N)}_{m^{(i_1 \dots i_N)}}\big ) \Big )\mapsto (\sigma_{i_j}, x^{(i_j)},  \tilde v ^{(i_N)}).
\eqn
Consequently, by equation \eqref{eq:C} we see that $\Crit(\, ^{(i_1\dots i_N)} \phw)$ is equal to the fiber product of the mentioned vector bundle with the isotropy algebra bundle given by the local trivialization
\bqn 
(\sigma_{i_j}, x^{(i_j)},  \tilde v ^{(i_N)}, \g_{\tilde v ^{(i_N)}})\mapsto (\sigma_{i_j}, x^{(i_j)},  \tilde v ^{(i_N)}).
\eqn
Lastly, since by equation \eqref{eq:G} we have $\g_{\tilde v ^{(i_N)}}=\g_{m^{(i_1,\dots, i_N)}}$ in case that all $\sigma_{i_j}$ are different from zero, we necessarily have $\dim \g_{\tilde v ^{(i_N)}}=d-\kappa$, which concludes the proof of the theorem.
\end{proof}

\section{Phase analysis of the weak transform. The second fundamental theorem}

In this section, we shall prove the second fundamental theorem in the derivation of equivariant spectral asymptotics for compact group actions.  We begin with  the following general observation. Let $M$ be a $n$-dimensional Riemannian manifold, and $C$ the critical set of a function $\psi \in \Cinft(M)$, which is assumed to be a smooth submanifold in a chart $\mathcal{O} \subset M$. Let further 
\bqn
\alpha:(x,y) \mapsto p, \qquad \beta:(q_1,\dots, q_n) \mapsto m, 	\qquad m \in \mathcal{O},
\eqn
be two systems of  local coordinates on $\mathcal{O}$, such that $\alpha(x,y) \in C$ if and only if $y=0$. One computes
\begin{align*}
\gd_{y_l}(\psi \circ \alpha)(x,y)=\sum_{i=1}^n \frac{\gd(\psi \circ \beta)}{\gd q_i} (\beta^{-1}\circ \alpha(x,y)) \, \gd _{y_l} (\beta^{-1} \circ \alpha)_i (x,y),
\end{align*}
as well as
\begin{align*}
\gd_{y_k} \gd_{y_l} (\psi \circ \alpha)(x,y)&=\sum_{i=1}^n \frac{\gd(\psi \circ \beta)}{\gd q_i} (\beta^{-1}\circ \alpha(x,y)) \, \gd_{y_k}  \gd _{y_l} (\beta^{-1} \circ \alpha)_i (x,y)\\
&+ \sum_{i,j=1}^n \frac{\gd^2(\psi \circ \beta)}{\gd q_i \gd{q_j}} (\beta^{-1}\circ \alpha(x,y))  \gd_{y_k}(\beta^{-1} \circ\alpha)_j(x,y) \,  \gd_{y_l}(\beta^{-1} \circ\alpha)_i(x,y).
\end{align*}
Since
\bqn
\alpha_{\ast,(x,y)}(\gd_{y_k})=\sum_{j=1}^n \gd_{y_k} (\beta^{-1} \circ \alpha)_j(x,y) \, \beta_{\ast,(\beta^{-1} \circ \alpha)(x,y)}(\gd_{q_j}),
\eqn
this implies
\bq
\label{eq:Hess}
\gd_{y_k} \gd_{y_l} (\psi \circ \alpha)(x,0)= \mathrm{Hess}\,  \psi_{|\alpha(x,0)} (\alpha_{\ast,(x,0)}(\gd_{y_k}),\alpha_{\ast,(x,0)}(\gd_{y_l})),
\eq
by definition of the Hessian. Let us now write $x=(x',x'')$, and consider the restriction of $\psi$ onto  the $\Cinft$-submanifold
\bdm
M_{c'}=\mklm { m \in \mathcal{O}: m=\alpha(c',x'',y)}.
\edm
We write $\psi_{c'}=\psi_{|M_{c'}}$, and denote the critical set of $\psi_{c'}$ by $C_{c'}$, which contains $C \cap M_{c'}$ as a subset. Introducing on $M_{c'}$ the local coordinates
\bqn
\alpha':(x'',y) \mapsto \alpha(c',x'',y),
\eqn
we obtain 
\bqn
\gd_{y_k} \gd_{y_l} (\psi_{c'} \circ \alpha')(x'',0)= \mathrm{Hess}\,  \psi_{c'|\alpha(x'',0)} (\alpha'_{\ast,(x'',0)}(\gd_{y_k}),\alpha'_{\ast,(x'',0)}(\gd_{y_l})).
\eqn
Let us now assume $C_{c'}=C \cap M_{c'}$, a transversal intersection.  Then $C_{c'}$ is a submanifold of $M_{c'}$, and the normal space to $C_{c'}$ as a submanifold of $M_{c'}$ at a point $\alpha'(x'',0)$ is spanned by the vector fields $\alpha'_{\ast,(x'',0)} (\gd _{y_k})$.
Since clearly
\bqn
\gd_{y_k} \gd_{y_l} (\psi_{c'} \circ \alpha')(x'',0)=\gd_{y_k} \gd_{y_l} (\psi \circ \alpha)(x,0),\qquad x=(c',x''),
\eqn
we thus have proven the following
\begin{lemma}
\label{lemma:A}
Assume that $C_{c'}=C \cap M_{c'}$. Then the restriction
\bqn
\mathrm{Hess} \, \psi({\alpha(c',x'',0)})_{|N_{\alpha(c',x'',0)}C}
\eqn
of the Hessian of $\psi$ to the normal space $N_{\alpha(c',x'',0)}C$ defines a non-degenerate quadratic form if, and only if the restriction
\bqn
\mathrm{Hess} \, \psi_{c'}({\alpha'(x'',0)})_{|N_{\alpha'(x'',0)}C_{c'}}
\eqn
of the Hessian of $\psi_{c'}$ to the normal space $N_{\alpha'(x'',0)}C_{c'}$ defines a non-degenerate quadratic form.
\end{lemma}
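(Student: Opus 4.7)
The plan is to establish the lemma by showing directly that, under the transversality hypothesis, the two Hessian matrices appearing in the statement agree entry by entry in compatible bases, so that the equivalence of non-degeneracy is tautological.

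First, I would note that $\alpha(c',x'',0)\in C$ is a critical point of $\psi$, and since $M_{c'}$ passes through it, also of the restriction $\psi_{c'}$. Hence both Hessians define intrinsic bilinear forms, and may be computed via formula \eqref{eq:Hess} and its analogue in the coordinates $\alpha'$ on $M_{c'}$. Because $\alpha'(x'',y)=\alpha(c',x'',y)$, a direct chain-rule calculation---already recorded in the text preceding the lemma---gives
\begin{equation*}
\gd_{y_k}\gd_{y_l}(\psi_{c'}\circ\alpha')(x'',0)=\gd_{y_k}\gd_{y_l}(\psi\circ\alpha)(c',x'',0).
\end{equation*}

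Second, I would identify a basis of each normal space. Because $C\cap\mathcal{O}$ is cut out by $y=0$, the images $\alpha_{\ast,(c',x'',0)}(\gd_{y_k})$ span a complement to $T_{\alpha(c',x'',0)}C$ in $T_{\alpha(c',x'',0)}M$, hence a basis of $N_{\alpha(c',x'',0)}C$. The transversality hypothesis $C_{c'}=C\cap M_{c'}$ is precisely what is needed to ensure that, in the coordinates $(x'',y)$ on $M_{c'}$, the set $C_{c'}$ is again locally cut out by $y=0$, so that $\alpha'_{\ast,(x'',0)}(\gd_{y_k})$ forms a basis of $N_{\alpha'(x'',0)}C_{c'}$. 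In particular, both normal spaces have the same dimension $\dim y$. Combining these two observations, the Gram matrix of $\mathrm{Hess}\,\psi_{|\alpha(c',x'',0)}$ restricted to $N_{\alpha(c',x'',0)}C$ in the basis $\{\alpha_{\ast}(\gd_{y_k})\}$ and the Gram matrix of $\mathrm{Hess}\,\psi_{c'|\alpha'(x'',0)}$ restricted to $N_{\alpha'(x'',0)}C_{c'}$ in the basis $\{\alpha'_{\ast}(\gd_{y_k})\}$ coincide entrywise. Since non-degeneracy of a bilinear form on a finite-dimensional vector space is equivalent to non-singularity of its Gram matrix in any basis, the asserted equivalence follows.

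The step I expect to require the most care is the second one: verifying that transversality genuinely forces $C_{c'}$ to be locally $\{y=0\}$ inside $M_{c'}$ in the coordinates $\alpha'$, so that the normal bundle to $C_{c'}$ in $M_{c'}$ has the same rank as that of $C$ in $M$. Without this dimension matching, the Gram matrix of the restricted Hessian would involve strictly fewer vectors and no direct comparison of non-degeneracies would be possible. The verification itself is a standard consequence of the transversal intersection formula together with the implicit function theorem applied to the defining equations of $C$ and $M_{c'}$ in the chart $\alpha$.
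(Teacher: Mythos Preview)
Your proposal is correct and follows essentially the same approach as the paper: the paper's proof is precisely the discussion preceding the lemma, which establishes \eqref{eq:Hess}, its analogue for $\psi_{c'}\circ\alpha'$, the equality $\gd_{y_k}\gd_{y_l}(\psi_{c'}\circ\alpha')(x'',0)=\gd_{y_k}\gd_{y_l}(\psi\circ\alpha)(c',x'',0)$, and the fact that under the hypothesis $C_{c'}=C\cap M_{c'}$ the vectors $\alpha'_{\ast,(x'',0)}(\gd_{y_k})$ span $N_{\alpha'(x'',0)}C_{c'}$. Your identification of the transversality step as the one requiring care matches the paper's emphasis exactly.
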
 \qed

Let us now state the second fundamental theorem, the notation being the same as in the previous sections.

\begin{theorem}
\label{thm:II}
Let
\bqn
\, ^{(i_1\dots i_N)} \tilde \psi^{tot}=\tau_{i_1} \dots \tau_{i_N} \, ^{(i_1\dots i_N)}\tilde \psi^ {wk, \, pre}=\tau_{i_1}(\sigma) \dots \tau_{i_N}(\sigma) \, ^{(i_1\dots i_N)}\tilde \psi^ {wk}
\eqn
denote the factorization of the phase function after $N$ iteration steps along the isotropy branch $ ((H_{i_1}), \dots, (H_{i_{N+1}})=(H_L))$. By construction, for $\tau_{i_j}\not=0$, $1\leq j\leq N$, the $G$-orbit through $m^{(i_1\dots i_N)}$ is of principal type $G/H_L$. Then, for each point of the critical manifold $\Crit(\, ^{(i_1\dots i_N)}\tilde \psi^ {wk})$,  the restriction of 
\bqn 
\mathrm{Hess} \, ^{(i_1\dots i_N)}\tilde \psi^ {wk}
\eqn
to the normal space to $\Crit(\, ^{(i_1\dots i_N)}\tilde \psi^ {wk})$ at the given point defines a non-degenerate symmetric bilinear form.
\end{theorem}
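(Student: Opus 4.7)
The plan is to use Lemma \ref{lemma:A} to reduce the Hessian calculation to a transverse slice, and then exploit the multilinear structure of $\, ^{(i_1\dots i_N)}\phw$. From Theorem 1, near a critical point $p_0 \in \Crit(\, ^{(i_1\dots i_N)}\phw)$, the critical manifold is parametrized by the tangential variables $(\sigma_{i_j}, x^{(i_j)}, \tilde v^{(i_N)}, B^{(i_N)}_{\mathrm{tan}}, \eta_{\mathrm{tan}})$ with $B^{(i_N)}_{\mathrm{tan}} \in \g_{\tilde v^{(i_N)}}$ and $\eta_{\mathrm{tan}} \in \mathrm{Ann}\big(\bigoplus_j E^{(i_j)} \oplus F^{(i_N)}\big)$, while the normal directions consist of the $A^{(i_j)} \in \g_{x^{(i_j)}}^\perp$, the component $B^{(i_N)}_{\mathrm{nor}}$ of $B^{(i_N)}$ complementary to $\g_{\tilde v^{(i_N)}}$ within $\g_{x^{(i_N)}}$, and the component $\eta_{\mathrm{nor}}$ of $\eta$ complementary to the annihilator. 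Freezing all tangential variables to their values at $p_0$ defines a transverse slice $M_{p_0}$; one first verifies, using the direct-sum structure of Theorem 1 together with the transversality constraints, that $p_0$ is the only critical point of $\, ^{(i_1\dots i_N)}\phw|_{M_{p_0}}$ in a neighborhood, so Lemma \ref{lemma:A} reduces the theorem to non-degeneracy of $\mathrm{Hess}(\, ^{(i_1\dots i_N)}\phw|_{M_{p_0}})$ at $p_0$.

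The decisive observation is that $\, ^{(i_1\dots i_N)}\phw$ is separately linear in each of $A^{(i_1)}, \dots, A^{(i_N)}, B^{(i_N)}$ and in $\eta$. Consequently all second partial derivatives of the restriction involving two variables drawn from the block $(A, B_{\mathrm{nor}})$, or both drawn from $\eta_{\mathrm{nor}}$, vanish identically, and the restricted Hessian assumes antidiagonal block form
\bqn
\mathrm{Hess}(\, ^{(i_1\dots i_N)}\phw|_{M_{p_0}}) = \begin{pmatrix} 0 & M \\ M^{t} & 0 \end{pmatrix},
\eqn
where the bilinear pairing $M$ is given at $p_0$ by
\bqn
(A, B_{\mathrm{nor}}, \eta_{\mathrm{nor}}) \longmapsto \eta_{\mathrm{nor}}\Big(\widetilde{A^{(i_1)}}_{m^{(i_1\dots i_N)}} + \sum_{j=2}^N K^{(i_j)}[\lambda(A^{(i_j)}) m^{(i_j\dots i_N)}] + K^{(i_N)}[\lambda(B^{(i_N)}_{\mathrm{nor}}) \tilde v^{(i_N)}]\Big),
\eqn
with $K^{(i_j)}$ denoting the composition of exponential differentials appearing in Theorem 1's formula for $\, ^{(i_1\dots i_N)}\phw$. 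Non-degeneracy of the Hessian is thus equivalent to non-degeneracy of $M$.

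Non-degeneracy of $M$ follows from the direct-sum decomposition $\bigoplus_{j=1}^N E^{(i_j)} \oplus F^{(i_N)} \subset T_{m^{(i_1\dots i_N)}} M$ established in the proof of Theorem 1, whose total dimension equals $\kappa$ by (\ref{eq:kappa}). On the one hand, if the image of $(A, B_{\mathrm{nor}})$ vanishes then each summand of the direct sum must vanish individually, and the transversality constraints $A^{(i_j)} \in \g_{x^{(i_j)}}^\perp$, $B^{(i_N)}_{\mathrm{nor}} \perp \g_{\tilde v^{(i_N)}}$ force $A^{(i_j)} = B^{(i_N)}_{\mathrm{nor}} = 0$. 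Dually, if $\eta_{\mathrm{nor}}$ annihilates the entire direct sum then $\eta_{\mathrm{nor}} \in \mathrm{Ann}\big(\bigoplus_j E^{(i_j)} \oplus F^{(i_N)}\big)$, contradicting its transversality to that annihilator. A telescoping dimension count $\sum_j d^{(i_j)} + e^{(i_N)} - \dim \g_{\tilde v^{(i_N)}} = \kappa$ confirms that source and target of $M$ both have dimension $\kappa$, hence $M$ is a square, hence bijective, pairing.

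The main obstacle I anticipate is uniformity in $\sigma$: the description of $E^{(i_j)}$ and $F^{(i_N)}$ via the iterated $(\exp)_\ast$ factors depends intricately on the $\tau_{i_j}(\sigma)$, and a naive reading suggests that the direct-sum structure could degenerate along the exceptional divisors $\sigma_{i_j} = 0$. However, Theorem 1 already secures the crucial fact --- the dimensional identity (\ref{eq:kappa}) holds for every $m^{(i_1\dots i_N)}$, not merely on the open locus $\sigma_{i_1}\cdots \sigma_{i_N} \neq 0$ --- so the antidiagonal argument goes through uniformly on all of $\Crit(\, ^{(i_1\dots i_N)}\phw)$, and no stratum-by-stratum analysis on the exceptional divisors is required.
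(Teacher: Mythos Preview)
Your argument is correct, and the core mechanism---the antidiagonal block Hessian arising from separate linearity in $(A,B)$ and in $\eta$, together with non-degeneracy of the pairing forced by the direct-sum decomposition $\bigoplus_j E^{(i_j)} \oplus F^{(i_N)}$ and the dimension identity \eqref{eq:kappa}---is exactly what drives the paper's Proposition~\ref{prop:1}. The difference lies in how the argument is organized. The paper splits into two cases: for $\sigma_{i_1}\cdots\sigma_{i_N}\neq 0$ it invokes the fact that the composite blow-up $\zeta$ is a diffeomorphism, pulls the question back to the original phase $\psi$, and appeals to Lemma~\ref{lemma:Reg} on the regular stratum of $\Crit(\psi)$; only on the exceptional locus $\sigma_{i_1}\cdots\sigma_{i_N}=0$ does it carry out the explicit Hessian computation (Proposition~\ref{prop:1}), using the simplified formula \eqref{eq:B} valid precisely when all $\tau_{i_j}$ vanish. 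You instead run the bilinear/antidiagonal argument uniformly in $\sigma$, slicing one step further than the paper (freezing $B^{(i_N)}_{\mathrm{tan}}$ and $\eta_{\mathrm{tan}}$ as well) so that the restricted function is a pure bilinear pairing with the origin as its unique critical point.

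What each buys: your route is more economical and avoids the case distinction entirely, since the direct-sum structure and \eqref{eq:kappa} from Theorem~1 hold for all $\sigma$, not just on the exceptional divisors. The paper's route, on the other hand, makes the geometric provenance of non-degeneracy explicit---away from the exceptional locus it is literally the non-degeneracy of the transversal Hessian of $\psi$ itself on $\Reg\Crit(\psi)$---and Lemma~\ref{lemma:Reg} is in any case needed later (e.g., in the proof of Theorem~\ref{thm:J} and the main result). One small point worth tightening in your write-up: the verification that $\Crit\big(\,^{(i_1\dots i_N)}\phw|_{M_{p_0}}\big)=\{p_0\}$ is logically prior to invoking Lemma~\ref{lemma:A}, and you obtain it precisely from the non-degeneracy of $M$; it would be cleaner to first establish that the restriction to $M_{p_0}$ is the pure bilinear form $\eta_{\mathrm{nor}}\big(\Lambda(A,B_{\mathrm{nor}})\big)$ (using $\lambda(B^{(i_N)}_{\mathrm{tan}})\tilde v^{(i_N)}=0$ and $\eta_{\mathrm{tan}}\in\mathrm{Ann}(\bigoplus_j E^{(i_j)}\oplus F^{(i_N)})$), then prove $M$ non-degenerate, and only then observe that this simultaneously yields both the hypothesis of Lemma~\ref{lemma:A} and the desired conclusion.
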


For the proof of Theorem \ref{thm:II} we need  the following
\begin{lemma}
\label{lemma:Reg}
 Let  $(\eta,X) \in \Crit{(\psi)}$, and  $ \pi(\eta) \in M({H_L})$. Then $(\eta,X) \in \mathrm{Reg} \,\Crit{(\psi)}$. Furthermore, the restriction of the Hessian of $\psi$ at the point $(\eta,X)$ to the normal space ${N_{(\eta,X)} \mathrm{Reg} \,\Crit{(\psi)}}$ defines  a non-degenerate quadratic form. 
\end{lemma}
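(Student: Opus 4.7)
The first assertion is immediate from \eqref{eq:y}: since $\pi(\eta) \in M(H_L)$ and $\eta \in \Omega$, we conclude that $G_\eta = G_{\pi(\eta)} \sim H_L$, hence $\eta \in \Reg \Omega$ and consequently $(\eta, X) \in \Reg \Crit(\psi)$ by \eqref{eq:z}. The substance of the lemma lies in the second assertion, namely the cleanness of $\Crit(\psi)$ in the sense of Bott on the open subset where $\pi(\eta) \in M(H_L)$.

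My plan is to exploit the fact that $\psi(\eta, X) = \mathbb{J}(\eta)(X)$ is \emph{linear} in the $X$-variable, so that $\partial_X^2 \psi \equiv 0$ identically, while the mixed second derivative reduces via the moment map identity $dJ_Y = -\iota_{\tilde Y} \omega$ to
\[
(\partial_X \partial_\eta \psi)_{(\eta_0, X_0)}(\dot\eta, Y) = (dJ_Y)_{\eta_0}(\dot\eta) = -\omega_{\eta_0}(\tilde Y_{\eta_0}, \dot\eta).
\]
Consequently, with respect to the decomposition $T_{(\eta_0, X_0)}(T^\ast M \times \g) = T_{\eta_0} T^\ast M \oplus \g$, the Hessian of $\psi$ at the critical point has a block structure whose lower-right block vanishes and whose off-diagonal block $B$ is given by the pairing $(\dot\eta, Y) \mapsto -\omega_{\eta_0}(\tilde Y_{\eta_0}, \dot\eta)$.

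As a complement of the tangent space $T_{(\eta_0,X_0)} \Reg \Crit(\psi)$ I would pick the natural $2\kappa$-dimensional subspace $N := N_{\eta_0} \Reg \Omega \oplus \g_{\eta_0}^\perp$, with $\g_{\eta_0}^\perp$ defined via the invariant form $\tr({}^t\!AB)$ on $\g$ already used in the paper; both summands have dimension $\kappa$, since $\Reg \Omega$ has codimension $\kappa$ in $T^\ast M$ and $\dim \g_{\eta_0} = \dim \g - \kappa$. The restriction of the Hessian to $N$ retains the same block structure, and a standard linear algebra argument shows that a symmetric bilinear form of this shape --- with vanishing lower-right block and square off-diagonal block $B$ --- is non-degenerate if and only if $B$ itself is non-degenerate. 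The problem therefore reduces to establishing non-degeneracy of the pairing
\[
N_{\eta_0}\Reg \Omega \times \g_{\eta_0}^\perp \ni (\dot\eta, Y) \longmapsto \omega_{\eta_0}(\tilde Y_{\eta_0}, \dot\eta) \in \R,
\]
which I expect to be the main conceptual step.

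For this I would use that the map $\g_{\eta_0}^\perp \to T_{\eta_0}(G\cdot\eta_0)$, $Y \mapsto \tilde Y_{\eta_0}$, is a linear isomorphism between $\kappa$-dimensional spaces, its kernel being $\g_{\eta_0} \cap \g_{\eta_0}^\perp = \{0\}$. The key geometric observation is that $\Reg \Omega$ is coisotropic and that its symplectic orthogonal coincides with the orbit tangent space: the inclusion $T_{\eta_0}(G\cdot\eta_0) \subset (T_{\eta_0}\Reg \Omega)^{\omega\perp}$ follows from $(dJ_Y)|_{T\Reg \Omega} = 0$, and equality then follows from the dimension count $\dim (T_{\eta_0}\Reg \Omega)^{\omega\perp} = 2n - (2n-\kappa) = \kappa$. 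For any nonzero $v \in T_{\eta_0}(G\cdot\eta_0)$ one therefore has $\omega_{\eta_0}(v, \cdot)|_{T\Reg \Omega} = 0$, and non-degeneracy of $\omega$ on all of $T_{\eta_0} T^\ast M$ forces $\omega_{\eta_0}(v, \cdot)|_{N_{\eta_0}\Reg \Omega}$ to be nonzero; by the matching of dimensions the pairing is non-degenerate, completing the argument. The principal obstacle is thus the identification of the symplectic orthogonal of $T\Reg \Omega$ with the orbit tangent space, a standard feature of Hamiltonian actions that I would verify through the rank of the moment map at regular points, namely $\mathrm{rank}(d\mathbb{J}_{\eta_0}) = \kappa$ with image $\mathrm{Ann}(\g_{\eta_0})$.
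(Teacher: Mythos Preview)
Your argument is correct and takes a genuinely different route from the paper's.

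The paper works in canonical coordinates $(q,p,s)$ and first invokes Lemma~\ref{lemma:A} to freeze the base variable $q$, reducing to the Hessian of $\psi_q(p,s)$. In those coordinates \emph{both} diagonal blocks vanish, so the Hessian is purely off--diagonal with entries $(dq_i)_m((\tilde X_j)_m)$; the paper then computes the kernel of this matrix explicitly, identifies it with $T_{(p,s)}\Crit(\psi_q)$, and concludes by the general linear-algebra fact that a symmetric form is non-degenerate on any complement of its kernel.

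You instead work intrinsically on $T_{\eta_0}T^\ast M\oplus\g$, keeping the (generally nonzero) $\eta$--$\eta$ block and using the observation that for a symmetric matrix $\begin{pmatrix}A&B\\B^{T}&0\end{pmatrix}$ with $B$ square, invertibility is equivalent to invertibility of $B$ regardless of $A$. The non-degeneracy of the off-diagonal pairing $(\dot\eta,Y)\mapsto\omega_{\eta_0}(\tilde Y_{\eta_0},\dot\eta)$ on $N_{\eta_0}\Reg\Omega\times\g_{\eta_0}^\perp$ is then extracted from the coisotropy of $\Reg\Omega$, namely $(T_{\eta_0}\Reg\Omega)^{\omega\perp}=T_{\eta_0}(G\cdot\eta_0)$, together with the dimension count. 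This is a clean symplectic-geometric argument that bypasses the parameter reduction of Lemma~\ref{lemma:A}; the paper's version, on the other hand, is tailored to mesh with the later proofs (Theorem~\ref{thm:II} and Proposition~\ref{prop:1}), where the same reduction-to-parameters trick is used repeatedly for the weak transforms $^{(i_1\dots i_N)}\tilde\psi^{wk}_\sigma$.
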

\begin{proof}
The first assertion is clear from \eqref{eq:x} - \eqref{eq:z}.  To see the second, note that by \eqref{eq:y}
\bqn 
\eta \in \Omega \cap T^\ast M(H_{L}), \tilde X_{\pi(\eta)}=0 \quad \Longrightarrow \quad \tilde X _\eta =0.
\eqn
Let now $\mklm{q_1,\dots, q_n}$ be local coordinates on $M$, $m=m(q)$, and write $\eta_m = \sum p_i (dq_i)_m$, $X=\sum s_i X_i$, where $\mklm{X_1,\dots,X_d}$ denotes a basis of $\g$. Then 
\bqn 
\psi(\eta,X)=\sum p_i (d q_i)_m (\tilde X_m),
\eqn
and 
\bqn 
\gd_p \psi (\eta, X) =0 \quad \Longleftrightarrow  \quad \tilde X_m =0, \qquad \quad \gd_s \psi(\eta, X) =0 \quad \Longleftrightarrow \quad \eta \in \Omega.
\eqn 
On  $T^\ast M(H_{L})\times \g$ we therefore get
\bqn 
\gd_{p,s} \psi (\eta, X) =0 \quad \Longrightarrow  \quad \gd_q \psi(\eta, X) =0.
\eqn 
Let $\psi_q(p,s)$ denote the phase function regarded as a function of the coordinates $p,s$ alone, while $q$ is regarded as a parameter. 
Lemma \ref{lemma:A} then implies that  on $T^\ast M(H_{L})\times \g$ the study of the transversal Hessian of $\psi$ can be reduced to the study of the transversal Hessian of $\psi_q$. Now, with respect to the coordinates $s,p$, the Hessian of $\psi_q$ is given by 
\bqn 
\left ( \begin{array}{cc}
0 & (dq_i)_m((\tilde X_{j})_m) \\ (dq_j)_m((\tilde X_{i})_m) & 0\\
\end{array} \right ).
\eqn 
A computation then shows that the kernel of the corresponding linear transformation is isomorphic to $T_{p,s}( \Crit \, \psi_q)=\mklm{(\tilde p,\tilde s): \sum \tilde p_j (dq_j)_{m(q)} \in \mathrm{Ann}(T_{m(q)} ( G \cdot m(q))), \sum \tilde s_j X_j \in \g_{m(q)}}$. The lemma now follows with the following general observation.
Let $\mathcal{B}$ be a symmetric bilinear form on an $n$-dimensional $\mathbb{K}$-vector space $V$, and $B=(B_{ij})_{i,j}$ the corresponding Gramsian matrix with respect to a basis $\mklm{v_1,\dots,v_n}$ of $V$ such that 
\bqn 
\mathcal{B}(u,w) = \sum_{i,j} u_i w_j B_{ij}, \qquad  u=\sum u_i v_i, \quad w=\sum w_i v_i.
\eqn
We denote the linear operator given by $B$ with the same letter, and write 
\bqn 
V =\ker B \oplus W.
\eqn
Consider the restriction $\mathcal{B}_{|W \times W}$ of $\mathcal{B}$ to $W\times W$, and assume that $\mathcal{B}_{|W\times W}(u,w) =0$ for all $u \in W$, but $w\not=0$. Since the Euclidean scalar product in $V$ is non-degenerate, we necessarily must have $Bw=0$, and consequently $ w \in \ker  B \cap W=\mklm{0}$, which is a contradiction. Therefore $\mathcal{B}_{|W \times W}$ defines a non-degenerate symmetric bilinear form. 
\end{proof}

\begin{proof}[Proof of second fundamental theorem] Let us begin by noting that  for $\sigma_{i_1} \cdots \sigma_{i_N}\not=0$,  the sequence of monoidal transformations $\zeta= \zeta_{i_1} \circ \zeta_{i_1i_2} \circ \dots \circ \zeta_{i_1\dots i_N} \circ \delta_{i_1\dots i_N}$  constitutes a diffeomorphism, so that by the previous lemma the restriction of 
\bqn 
\mathrm{Hess} ^{(i_1\dots i_N)} \tilde \psi^{tot} (\sigma_{i_j},x^{(i_j)},\tilde v^{(i_N)}, \alpha^{(i_j)}, \beta^{(i_N)},p)
\eqn
to the normal space of 
\bqn 
\mathrm{Crit}(\, ^{(i_1\dots i_N)} \psi^{tot})_{\sigma_{i_1}\cdots \sigma_{i_N}\not=0}
\eqn
defines a non-degenerate quadratic form.  Next, one computes
\begin{align*}
\left (\frac{\gd^2 \, ^{(i_1\dots i_N)}\tilde \psi^ {tot}}{\gd \gamma_k\gd \gamma_l}  \right )_{k,l} & = \tau_{i_1}(\sigma) \cdots  \tau_{i_N }(\sigma) 
\left (\frac{\gd^2 \, ^{(i_1\dots i_N)}\tilde \psi^ {wk}}{\gd \gamma_k\gd \gamma_l}  \right )_{k,l} \\ &+
\left (\begin{array}{cc}
\left (   \frac{ \gd ^2 (\tau_{i_1}(\sigma) \cdots  \tau_{i_N }(\sigma))}{\gd \sigma_{i_r}\sigma_{i_s}} \right )_{r,s} & 0 \\ 0 & 0
\end{array}\right ) \, ^{(i_1\dots i_N)}\tilde \psi^ {wk} +R
\end{align*}
where $R$ represents a matrix whose entries contain first order derivatives of $^{(i_1\dots i_N)}\tilde \psi^ {wk}$ as factors. But since 
\bqn 
\mathrm{Crit}(\, ^{(i_1\dots i_N)} \psi^{tot})_{\sigma_{i_1}\cdots \sigma_{i_N}\not=0} =\Crit(^{(i_1\dots i_N)}\tilde \psi^ {wk})_{|\sigma_{i_1}\cdots \sigma_{i_N}\not=0}, 
\eqn
 we conclude that  the transversal Hessian of $^{(i_1\dots i_N)}\tilde \psi^ {wk}$ does not degenerate along the manifold $\Crit(^{(i_1\dots i_N)}\tilde \psi^ {wk})_{|\sigma_{i_1}\cdots \sigma_{i_N}\not=0}$. Therefore, it  remains to study the transversal Hessian of $^{(i_1\dots i_N)}\tilde \psi^ {wk}$ in the case that any of the $\sigma_{i_j}$ vanishes. Now, the proof of the first fundamental theorem,  in particular  \eqref{eq:IV},  showed that
\bqn 
\gd _{p, \alpha^{(i_1)}, \dots, \alpha^{(i_N)}, \beta^{(i_N)}} \, ^{(i_1\dots i_N)}\tilde \psi^ {wk}=0 \quad \Longrightarrow \quad \gd _{\sigma_{i_1}, \dots \sigma_{i_N}, x^{(i_1)}, \dots, x^{(i_N)},\tilde v^{(i_N)}} \, ^{(i_1\dots i_N)}\tilde \psi^ {wk}=0.
\eqn
 If therefore 
$$
\, ^{(i_1\dots i_N)}\tilde \psi^ {wk}_{\sigma_{i_j}, x^{(i_j)},\tilde v^{(i_N)}}(\alpha^{(i_j)}, \beta^{(i_N)},p)
$$ 
denotes the weak transform of the phase function $\psi$ regarded as a function of the variables $(\alpha^{(i_1)},\dots, \alpha^{(i_N)}, \beta^{(i_N)},p)$ alone, while the variables $(\sigma_{i_1},\dots,\sigma_{i_N}, x^{(i_1)},\dots, x^{(i_N)},\tilde v^{(i_N)})$ are kept fixed,
\bqn 
\Crit \big ( \, ^{(i_1\dots i_N)}\tilde \psi^ {wk}_{\sigma_{i_j}, x^{(i_j)},\tilde v^{(i_N)}}\big )=\Crit \big ( \, ^{(i_1\dots i_N)}\tilde \psi^ {wk}\big )  \cap \mklm{\sigma_{i_j}, x^{(i_j)},\tilde v^{(i_N)} = \, \, \text{constant}}. 
\eqn
Thus, the critical set of $\, ^{(i_1\dots i_N)}\tilde \psi^ {wk}_{\sigma_{i_j}, x^{(i_j)},\tilde v^{(i_N)}}$ is equal to the fiber over $(\sigma_{i_j}, x^{(i_j)},\tilde v^{(i_N)})$ of the vector bundle
\bqn 
\Big ((\sigma_{i_j}, x^{(i_j)},\tilde v^{(i_N)}), \g _{\tilde v^{(i_N)}} \times \mathrm{Ann} \big ( \bigoplus\limits_{j=1}^N E^{(i_j)}_{m^{(i_1\dots i_N)}} \oplus F^{(i_N)}_{m^{(i_1\dots i_N)}} \big ) \Big ) \mapsto  (\sigma_{i_j}, x^{(i_j)},\tilde v^{(i_N)}),
\eqn
and in particular  a smooth submanifold. Lemma \ref{lemma:A} then implies that the study of the transversal Hessian  of $\, ^{(i_1\dots i_N)}\tilde \psi^ {wk}$ can be reduced to the study of the transversal Hessian of $\, ^{(i_1\dots i_N)}\tilde \psi^ {wk}_{\sigma_{i_j}, x^{(i_j)},\tilde v^{(i_N)}}$. The crucial fact is now contained in the following 
\begin{proposition}
\label{prop:1}
Assume that 
$\sigma_{i_1} \cdots \sigma_{i_N}=0$. Then 
\bqn 
\ker  \mathrm{Hess} \, ^{(i_1\dots i_N)}\tilde \psi^ {wk}_{\sigma_{i_j}, x^{(i_j)},\tilde v^{(i_N)}}(0,\dots, 0, \beta^{(i_N)},p)\simeq T_{(0, \dots, 0,\beta^{(i_N)},p)}  \mathrm{Crit} \big (\,^{(i_1\dots i_N)}\tilde \psi^ {wk}_{\sigma_{i_j}, x^{(i_j)},\tilde v^{(i_N)}} \big )
\eqn
for all $(0,\dots, 0, \beta^{(i_N)},p) \in \mathrm{Crit} \big (\,^{(i_1\dots i_N)}\tilde \psi^ {wk}_{\sigma_{i_j}, x^{(i_j)},\tilde v^{(i_N)}} \big )$, and arbitrary $x^{(i_j)}$, $\tilde v^{(i_j)}$.
\end{proposition}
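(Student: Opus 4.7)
The plan is to exploit formula \eqref{eq:B} from the proof of the first fundamental theorem. An initial observation: the hypothesis $\sigma_{i_1}\cdots\sigma_{i_N}=0$ forces at least one $\sigma_{i_k}$ to vanish, and an inspection of the iterated substitutions $\delta_{i_1\dots i_N}$ shows that once any single $\sigma_{i_k}$ is zero, every coordinate $\tau_{i_j}(\sigma)$ becomes zero as well. Hence \eqref{eq:B} applies, and
\bqn
\,^{(i_1\dots i_N)}\tilde\psi^{wk}_{\sigma_{i_j},x^{(i_j)},\tilde v^{(i_N)}}(\alpha,\beta,p)=\sum p_i\, dq_i\Bigl(\widetilde{A^{(i_1)}}_{x^{(i_1)}}+\sum_{j=2}^N \lambda(A^{(i_j)})x^{(i_j)}+\lambda(B^{(i_N)})\tilde v^{(i_N)}\Bigr),
\eqn
which is a bilinear form in the pair of vector variables $((\alpha^{(i_1)},\dots,\alpha^{(i_N)},\beta^{(i_N)}),p)$, the parameters $\sigma_{i_j},x^{(i_j)},\tilde v^{(i_N)}$ being frozen.

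The inclusion of the tangent space into the kernel of the Hessian at any critical point is automatic. For the reverse inclusion I would use that the Hessian of a bilinear form $(u,v)\mapsto u^T M v$ is constant with the block shape $\bigl(\begin{smallmatrix}0 & M\\ M^T & 0\end{smallmatrix}\bigr)$, so a triple $(\alpha,\beta,p)$ belongs to the kernel precisely when
\bqn
p\bigl(\widetilde{A^{(i_1)}_r}_{x^{(i_1)}}\bigr)=p\bigl(\lambda(A^{(i_j)}_r)x^{(i_j)}\bigr)=p\bigl(\lambda(B^{(i_N)}_r)\tilde v^{(i_N)}\bigr)=0 \qquad \forall\, r,j,
\eqn
and simultaneously
\bqn
\widetilde{A^{(i_1)}(\alpha^{(i_1)})}_{x^{(i_1)}}+\sum_{j=2}^N\lambda(A^{(i_j)}(\alpha^{(i_j)}))x^{(i_j)}+\lambda(B^{(i_N)}(\beta^{(i_N)}))\tilde v^{(i_N)}=0.
\eqn
The first family of equations says precisely $p\in\mathrm{Ann}\bigl(\bigoplus_j E^{(i_j)}_{x^{(i_1)}}\oplus F^{(i_N)}_{x^{(i_1)}}\bigr)$.

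For the second, I would invoke the direct sum decomposition $E^{(i_1)}_{x^{(i_1)}}\oplus\cdots\oplus E^{(i_N)}_{x^{(i_1)}}\oplus F^{(i_N)}_{x^{(i_1)}}\subset T_{x^{(i_1)}}M$ established in the first fundamental theorem via the nested orthogonal normal spaces $V^{(i_1\dots i_j)}$ of \eqref{eq:V}. Linear independence of these summands forces each of the three terms above to vanish separately; combining $\widetilde{A^{(i_1)}(\alpha^{(i_1)})}_{x^{(i_1)}}=0$ with $A^{(i_1)}(\alpha^{(i_1)})\in\g_{x^{(i_1)}}^\perp$ gives $\alpha^{(i_1)}=0$, and an analogous argument using that $\lambda$ acts faithfully on $\g^\perp_{x^{(i_j)}}$ at $x^{(i_j)}$ yields $\alpha^{(i_j)}=0$ for every $j\ge 2$, while the final summand gives $B^{(i_N)}(\beta^{(i_N)})\in\g_{\tilde v^{(i_N)}}$. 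Collecting, the kernel is exactly the set of triples $(0,\beta,p)$ with $B^{(i_N)}(\beta)\in\g_{\tilde v^{(i_N)}}$ and $p$ in the stated annihilator, which matches the fibre of the vector-bundle description of $\Crit(\,^{(i_1\dots i_N)}\tilde\psi^{wk}_{\sigma,x,v})$ recalled just before the proposition. Being linearly cut out, this critical set coincides with its own tangent space at every point.

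The main obstacle I expect is justifying that $\lambda(A^{(i_j)}(\alpha^{(i_j)}))x^{(i_j)}=0$ forces $\alpha^{(i_j)}=0$ in the singular regime for $j\ge 2$: here one has to invoke the lemma from the proof of the first fundamental theorem identifying $x^{(i_j)}$ as a principal-type point for the $G_{x^{(i_{j-1})}}$-action on the slice $V^{(i_1\dots i_{j-1})}$, so that the infinitesimal action of $\g_{x^{(i_{j-1})}}/\g_{x^{(i_j)}}$ on $x^{(i_j)}$ is injective; only then does the orthogonal splitting $\g_{x^{(i_{j-1})}}=\g_{x^{(i_j)}}\oplus\g_{x^{(i_j)}}^\perp$ upgrade the direct-sum vanishing to $\alpha^{(i_j)}=0$.
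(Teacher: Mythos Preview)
Your argument is correct and mirrors the paper's own proof: both invoke \eqref{eq:B} to recognize the function as bilinear in $((\alpha,\beta),p)$, read off the block-form Hessian, and use the direct-sum decomposition $\bigoplus_j E^{(i_j)}_{x^{(i_1)}}\oplus F^{(i_N)}_{x^{(i_1)}}$ from the first fundamental theorem to identify the kernel with the tangent space to the (linear) critical fibre. Your final worry is unnecessary, however: the injectivity of $\alpha^{(i_j)}\mapsto\lambda(A^{(i_j)}(\alpha^{(i_j)}))x^{(i_j)}$ on $\g_{x^{(i_j)}}^\perp$ follows immediately from the fact that the kernel of $Y\mapsto\lambda(Y)x^{(i_j)}$ on $\g_{x^{(i_{j-1})}}$ is by definition the isotropy algebra $\g_{x^{(i_j)}}$, hence transverse to $\g_{x^{(i_j)}}^\perp$ --- no principal-type statement is required, and the paper does not invoke one at this step.
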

\begin{proof}
With \eqref{eq:B} one computes
\begin{align*}
\gd _{p_r} \, ^{(i_1\dots i_N)}\tilde \psi^ {wk}&=  \, dq_r \Big (\widetilde{ A^{(i_1)}}_{x^{(i_1)}} + \sum_{j=2}^N  \lambda(A^{(i_j)})  x ^{(i_j)} + \lambda(B^{(i_N)})  \tilde v ^{(i_N)}  \Big ). 
\end{align*}
The second derivatives therefore read
\begin{align*}
\gd_{p_r} \gd _{p_s} \, ^{(i_1\dots i_N)}\tilde \psi^ {wk}_{\sigma_{i_j}, x^{(i_j)},\tilde v^{(i_N)}} &=0, \\
\gd_{\alpha^{(i_1)}_s} \gd _{p_r} \, ^{(i_1\dots i_N)}\tilde \psi^ {wk}_{\sigma_{i_j}, x^{(i_j)},\tilde v^{(i_N)}} &=dq_r((\tilde A^{(i_1)}_{s})_{x^{(i_1)}}), \\
\gd_{\alpha^{(i_j)}_s} \gd _{p_r} \, ^{(i_1\dots i_N)}\tilde \psi^ {wk}_{\sigma_{i_j}, x^{(i_j)},\tilde v^{(i_N)}} &=dq_r(\lambda(A_s^{(i_j)}) x^{(i_j)}), \\
\gd_{\beta^{(i_N)}_s} \gd _{p_r} \, ^{(i_1\dots i_N)}\tilde \psi^ {wk}_{\sigma_{i_j}, x^{(i_j)},\tilde v^{(i_N)}} &=dq_r(\lambda(B_s^{(i_N)}) \tilde v^{(i_N)}). 
\end{align*}
Next, one has
\begin{align*}
\gd_{\alpha^{(i_j)}_s} \, ^{(i_1\dots i_N)}\tilde \psi^ {wk}&=\sum p_i dq_i (\lambda (A_s^{(i_j)}) x^{(i_j)}), \qquad  j=2,\dots, N,
\end{align*}
and similar expressions for the $\alpha^{(i_1)}$-derivatives, so that for $\sigma_{i_1}\cdots \sigma_{i_j}=0$ all the second order derivatives involving $\alpha^{(i_j)}$  must vanish, except the ones that were already computed. Finally, the computation of the $\beta^{(i_N)}$-derivatives yields
\bqn 
\gd_{\beta^{(i_N)}_r} \gd _{\beta^{(i_N)}_s} \, ^{(i_1\dots i_N)}\tilde \psi^ {wk}_{\sigma_{i_j}, x^{(i_j)},\tilde v^{(i_N)}} =0.
\eqn
Collecting everything we see that for $\sigma_{i_1}\cdots \sigma_{i_j}=0$, the Hessian of the function $\, ^{(i_1\dots i_N)}\tilde \psi^ {wk}_{\sigma_{i_j}, x^{(i_j)},\tilde v^{(i_N)}}$  with respect to the coordinates $\alpha^{(i_j)}, \beta^{(i_j)},p$ is given on its critical set by the matrix
\bqn 
\left ( \begin{array}{ccccc}
0 & dq_r((\tilde A^{(i_1)}_{s})_{x^{(i_1)}}) & \dots & dq_r(\lambda(A_s^{(i_N)}) x^{(i_j)})& dq_r(\lambda(B_s^{(i_N)}) \tilde v^{(i_N)}) \\
\, dq_s((\tilde A^{(i_1)}_{r})_{x^{(i_1)}})& 0 & \dots & 0 & 0 \\
\vdots & \vdots &\vdots &\vdots &\vdots \\
\,dq_s(\lambda(A_r^{(i_N)}) x^{(i_j)})& 0 & \dots & 0 & 0 \\
 \,dq_s(\lambda(B_r^{(i_N)}) \tilde v^{(i_N)})& 0 & \dots & 0 & 0
\end{array} \right ). 
\eqn
Let us now compute the kernel of the linear transformation corresponding to this matrix. Cleary,  the vector $(\tilde p, \tilde \alpha^{(i_1)}, \dots, \tilde \alpha^{(i_N)}, \tilde \beta^{(i_N)})$ lies in the kernel if and only if

\medskip
\begin{tabular}{ll}
{(a)} & $\sum \tilde \alpha_r^{(i_1)} (\tilde A_r^{(i_1)})_{ x^{(i_1)}}+  \dots +\sum \tilde \alpha_r^{(i_N)} \lambda (A_r^{(i_N)}) x^{(i_N)}+ \sum \tilde \beta_r^{(i_N)} \lambda(B_r^{(i_N)}) \tilde v^{(i_N)}=0$ ; \\[2pt]
{(b)} &  $\sum \tilde p_s dq_s((\tilde Y^{(i_1)})_{x^{(i_1)}})=0$ for all $Y^{(i_1)} \in \g_{x^{(i_1)}}^\perp$,   $\sum \tilde p_s dq_s(\lambda ( \g_{x^{(i_j)}}^\perp)x^{(i_j)})=0$, $2 \leq j \leq N$;\\[2pt]
{(c)} &$\sum \tilde p_s dq_s(\lambda ( \g_{x^{(i_N)}})\tilde v^{(i_N)})=0$.
\end{tabular}
\medskip

Let $E^{(i_j)}$,  $F^{(i_N)}$, and  $V^{(i_1\dots i_N)}$ be defined as in \eqref{eq:EF} and \eqref{eq:V}. Then
\bqn 
\sum \tilde \alpha_r^{(i_j)} (\tilde A_r^{(i_1)})_{x^{(i_1)}}+  \dots \sum \tilde \alpha_r^{(i_N)}\lambda( A_r^{(i_N)}) x^{(i_N)}+ \sum \tilde \beta_r^{(i_N)} \lambda( B_r^{(i_N)}) \tilde v^{(i_N)} \in \bigoplus_{j=1}^N E^{(i_j)}_{x^{(i_1)}} \oplus F^{(i_N)}_{x^{(i_1)}},
\eqn
so that for condition (a) to hold, it is  necessary  and sufficient that 
\bqn 
\tilde \alpha^{(i_j)} =0, \quad 1 \leq j \leq N, \qquad \sum \tilde \beta_r^{(i_N)} \lambda(B_r^{(i_N)}) \tilde v^{(i_N)}=0.
\eqn
Since $\g_{x^{(i_j)}}^\perp\subset \g_{x^{(i_{j-1})}}$,  condition (b) is equivalent to $\sum \tilde p_s (dq_s)_{x^{(i_1)}} \in \mathrm{Ann}( E^{(i_j)}_{x^{(i_1)}})$ for al $j=1,\dots, N$. Similarly, condition (c) is equivalent to $\sum \tilde p_s (dq_s)_{x^{(i_1)}} \in \mathrm{Ann}( F^{(i_N)}_{x^{(i_1)}})$.  On the other hand, by \eqref{eq:C},
\begin{gather*}
T_{(0, \dots, 0,\beta^{(i_N)},p)}  \mathrm{Crit} \big (\,^{(i_1\dots i_N)}\tilde \psi^ {wk}_{\sigma_{i_j}, x^{(i_j)},\tilde v^{(i_N)}} \big )= \Big \{( \tilde \alpha^{(i_1)}, \dots, \tilde \alpha^{(i_N)}, \tilde \beta^{(i_N)}, \tilde p): \tilde \alpha^{(i_j)}=0, \\  \sum \tilde  \beta^{(i_N)}_r \lambda(B^{(i_N)}_r) \in \g_{\tilde v^{(i_N)}}, \, \sum \tilde p_s (dq_s)_{x^{(i_1)}} \in \mathrm{Ann}\Big ( \bigoplus_{j=1}^N E^{(i_j)}_{x^{(i_1)}}\oplus F^{(i_N)}\Big ) \Big \},
\end{gather*}
and the proposition follows.
\end{proof}
The previous proposition now  implies that for $\sigma_{i_1}\cdots \sigma_{i_N}=0$
\bqn 
\mathrm{Hess} \,^{(i_1\dots i_N)}\tilde \psi^ {wk}_{\sigma_{i_j}, x^{(i_j)},\tilde v^{(i_N)}}(0, \dots, 0,\beta^{(i_N)},p)_{|N_{(0, \dots, 0,\beta^{(i_N)},p)}  \mathrm{Crit} \big (\,^{(i_1\dots i_N)}\tilde \psi^ {wk}_{\sigma_{i_j}, x^{(i_j)},\tilde v^{(i_N)}} \big )}
\eqn
defines a non-degenerate symmetric bilinear form for all points $(0, \dots, 0,\beta^{(i_N)},p)$ lying in the critical set of $\,^{(i_1\dots i_N)}\tilde \psi^ {wk}_{\sigma_{i_j}, x^{(i_j)},\tilde v^{(i_N)}}$, and the second fundamental theorem follows with  Lemma \ref{lemma:A}.
\end{proof}
We are now in position to give an asymptotic description of the integral $I(\mu)$. But before, it might be in place to say a few words about the desingularization process.

\section{Resolution of singularities and the stationary phase theorem}
\label{sec:6}

Let $M$ be a smooth variety, $\mathcal{O}_M$ the structure sheaf of rings of $M$, and $I \subset \mathcal{O}_M$ an ideal sheaf. The aim in the theory of resolution of singularities is  to construct a birational morphism $\pi: \tilde M \rightarrow M$ such that $\tilde M$ is smooth, and the pulled back ideal sheaf $\pi^\ast I$ is locally principal. This is called the \emph{principalization} of $I$, and implies resolution of singularities. That is, for every quasi-projective variety $X$, there is a smooth variety $\tilde X$, and a birational and projective morphism $\pi:\tilde X \rightarrow X$. Vice versa, resolution of singularities implies principalization.

Consider next the derivative  $D(I)$ of $I$, which is the sheaf ideal that is generated by all derivatives of elements of $I$. Let further $Z \subset M$ be a smooth subvariety, and $\pi: B_Z M \rightarrow M$ the corresponding monoidal transformation with center $Z$ and exceptional divisor $F \subset B_ZM$. Assume that $(I,m)$ is a  marked ideal sheaf with $m \leq \mathrm{ord}_Z I$. The total transform $\pi^\ast I$ vanishes along $F$ with multiplicity $\mathrm{ord}_Z I$, and by removing the ideal sheaf $\mathcal{O}_{B_ZM}(-\mathrm{ord}_Z I \cdot F)$ from $\pi^\ast I$  we obtain the \emph{birational, or weak transform} $\pi_\ast^{-1} I$ of $I$. Take local coordinates $(x_1,\dots, x_n)$ on $M$ such that $Z=(x_1= \dots =x_r=0)$. As a consequence,
\bqn
y_1=\frac {x_1}{x_r}, \dots, y_{r-1}=\frac{x_{r-1}}{x_r}, y_r=x_r, \dots,  y_n=x_n
\eqn
define local coordinates on $B_Z M$, and for $(f,m) \in (I,m)$ one has
\bqn
\pi_\ast^{-1} (f(x_1,\dots,x_n),m)= (y_r^{-m} f (y_1y_r, \dots y_{r-1}y_r, y_r, \dots, y_n),m).
\eqn
By computing the first derivatives of $\pi_\ast^{-1} (f(x_1,\dots,x_n),m)$, one then sees that for any composition  $\Pi:\tilde M \rightarrow M$ of blowing-ups of order greater or equal than $m$, 
\bqn
\Pi^{-1}_\ast ( D(I,m)) \subset D(\Pi^{-1}_\ast(I,m)),
\eqn
see Koll\'{a}r \cite{kollar}, Theorem 71. 

Let us now come back to our situation, and consider on $T^\ast M\times \g$ the ideal $I_\psi=(\psi)$ generated by the phase function $\psi=\mathbb{J}(\eta)(X)$, together with its vanishing set $V_\psi$. The derivative of $I$ is given by $D(I_\psi) = I_{\mathcal{C}}$, 
where $I_{\mathcal{C}}$ denotes the vanishing ideal of the critical set $\mathcal{C}=\Crit(\psi)$, and by the implicit function theorem $\mathrm{Sing} \,V_\psi \subset V_\psi \cap \mathcal{C}=\mathcal{C}$. Let  $((H_{i_1}), \cdots, (H_{i_{N+1}})=(H_L))$ be an arbitrary branch of isotropy types, and consider the corresponding sequence of monoidal transformations $(\zeta_{i_1} \circ \zeta_{i_1i_2} \circ \cdots \circ \zeta_{i_1\dots i_{N}}) \otimes \id_{fiber}$. Compose it with the sequence of monoidal transformations $\delta_{i_1\dots i_N}$, and denote the resulting transformation by $\zeta$. We then have the diagram

\bqn 
 \begin{array}{ccccc}
\zeta^\ast (I_{\mathcal{C}})  & \supset & \zeta^\ast (I_\psi)  &= \prod_{i=1}^N \tau_{i_j}(\sigma) \, \cdot\zeta^{-1}_\ast(I_\psi) & \ni \tau_{i_1}(\sigma) \cdots \tau_{i_N}(\sigma)  \,^{(i_1\dots i_N)}\tilde \psi^ {wk}  \\ [4pt]
\uparrow &  & \uparrow  & &\\[4pt]
I_{\mathcal{C}}  &\supset & I_\psi   & \ni \psi & 
\end{array} 
\eqn

\medskip
\noindent 
According to the previous considerations, we have the inclusion 
\bqn 
\zeta^{-1}_\ast (I_{\mathcal{C}}) \subset D(\zeta^{-1}_\ast (I_\psi)).
\eqn
It is easy to see that $\zeta^{-1}_\ast(I_\psi)$ is not resolved, so that $\prod_{i=1}^N \tau_{i_j}(\sigma) \, \cdot\zeta^{-1}_\ast(I_\psi)$ is only a partial principalization. On the other hand, the first fundamental theorem implies that $D(\zeta^{-1}_\ast (I_\psi))$ is a resolved ideal, 
\bqn
\overline{\mathrm{Crit}(\, ^{(i_1\dots i_N)} \psi^{tot})_{\sigma_{i_1}\dots \sigma_{i_N}\not=0}} =  \mathrm{Crit}(\, ^{(i_1\dots i_N)} \psi^{wk})
\eqn
being a smooth manifold. Nevertheless,  this again results only in a partial resolution $\tilde{\mathcal{C}}$  of $\mathcal{C}$, since the induced global birational transform $ \tilde{\mathcal{C}} \rightarrow \mathcal{C}$ is in general not surjective. This is because of the transformation $\delta_{i_1\dots i_N}$, and the fact that the centers of our monoidal transformations were only chosen in $M \times \g$, to keep the phase analysis of the weak transform of $\psi$ as simple as possible. In turn, the singularities of $\mathcal{C}$ along the fibers of $T^\ast M$ were not completely resolved.

As we shall see in the next section, the principalization of the ideal $I_\psi$ 
\bqn 
\zeta^\ast (I_\psi)  = \tau_{i_1}\cdots \tau_{i_N} \zeta^{-1}_\ast(I_\psi), 
\eqn
and the fact that the weak transform $ \,^{(i_1\dots i_N)}\tilde \psi^ {wk}$ has a clean critical set, are essential for an application of the stationary phase principle in the context of singular equivariant asymptotics, which  is we why had to  consider resolutions of both $\mathcal{C}$ and $V_\psi$ in $T^\ast M \times \g$. By Hironaka's theorem on resolution of singularities,  such resolutions always exist, and are equivalent to the principalization of the corresponding ideals.  But in general, they would not be explicit enough \footnote{In particular, the so-called numerical data of $\zeta$ are not known a priori, which in our case are given in terms of the dimensions $c^{(i_j)}$ and $d^{(i_j)}$.}  to allow an application of the stationary phase theorem. This is the reason why we were forced to construct an explicit, though partial, resolution $\zeta$ of $\mathcal{C}$ and $V_\psi$ in $T^\ast M\times \g $, using as centers  isotropy algebra bundles over sets of maximal singular orbits. Partial desingularizations  of the zero level set $\Omega$ of the moment map and the symplectic quotient $\Omega/G$ have been obtained  e.g. by Meinrenken-Sjamaar \cite{meinrenken-sjamaar} for compact symplectic manifolds with a Hamiltonian compact Lie group action by performing blowing-ups along minimal symplectic suborbifolds containing the strata of maximal depth  in $\Omega$.

\section{Asymptotics for the integrals  $I_{i_1\dots i_N}(\mu)$ }

In this section, we will give an asymptotic description of the integrals $I_{i_1\dots i_N}(\mu)$  defined in \eqref{eq:N}.  Since the considered integrals are absolutely convergent integral, we can interchange the order of integration by Fubini, and write
\bqn
I_{i_1\dots i_N}(\mu)= \int_{(-1,1)^N}  J_{\tau_{i_1}, \dots, \tau_{i_N}}\Big ( \frac \mu{\tau_{i_1}\cdots \tau_{i_N}} \Big ) \prod_{j=1}^N |\tau_{i_j}|^{c^{(i_j)} + \sum _{r=1}^j d^{(i_r)} -1} \d \tau_{i_N} \dots \d \tau_{i_1},
\eqn
where we set
\begin{gather*}
 J_{\tau_{i_1}, \dots, \tau_{i_N}}(\nu)\\= \int  e^{i \, ^{(i_1\dots i_N)} \tilde \psi ^{wk,pre}/\nu}  \, a_{i_1\dots i_N} \,    \Phi_{i_1\dots i_N}   \d(T^\ast _{m^{(i_1\dots i_N)}}W_{i_j})(\eta)  \bigwedge_j dA^{(i_j)}  \d B^{(i_N)}  \d \tilde v^{(i_N)}  \bigwedge_l \d x^{(i_{l})},
\end{gather*}
and introduced the new parameter
\bqn
\nu =\frac \mu {\tau_{i_1}\cdots \tau_{i_N}}.
\eqn

Now,  for an arbitrary $0<\epsilon < T$ to be chosen later we define
\begin{align*}
I_{i_1\dots i_N}^1(\mu)&= \int_{((-1,1)\setminus (-\epsilon,\epsilon))^N}  J_{\tau_{i_1}, \dots, \tau_{i_N}}\Big ( \frac \mu{\tau_{i_1}\cdots \tau_{i_N}} \Big ) \prod_{j=1}^N |\tau_{i_j}|^{c^{(i_j)} + \sum _{r=1}^j d^{(i_r)} -1} \d \tau_{i_N} \dots \d \tau_{i_1},\\
I_{i_1\dots i_N}^2(\mu)&= \int_{(-\epsilon,\epsilon)^N}  J_{\tau_{i_1}, \dots, \tau_{i_N}}\Big ( \frac \mu{\tau_{i_1}\cdots \tau_{i_N}} \Big ) \prod_{j=1}^N |\tau_{i_j}|^{c^{(i_j)} + \sum _{r=1}^j d^{(i_r)} -1} \d \tau_{i_N} \dots \d \tau_{i_1} . 
\end{align*}
\begin{lemma}
\label{lemma:kappa}
One has $c^{(i_j)} + \sum _{r=1}^j d^{(i_r)} -1\geq \kappa$ for arbitrary $j=1,\dots, N$.
\end{lemma}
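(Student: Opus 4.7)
The strategy is to give a geometric interpretation to each term: the sum $\sum_{r=1}^j d^{(i_r)}$ will telescope to an orbit dimension on $M$, and $c^{(i_j)}-1$ is the dimension of a slice sphere into which a principal orbit must embed. Combining the two will give exactly the bound $\kappa = \dim G - \dim H_L$.

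First, I would apply the chain of orthogonal decompositions $\g_{x^{(i_{r-1})}} = \g_{x^{(i_r)}} \oplus \g_{x^{(i_r)}}^\perp$ (with the convention $\g_{x^{(i_0)}} = \g$) introduced in the $N$-th decomposition, so that $d^{(i_r)} = \dim \g_{x^{(i_{r-1})}} - \dim \g_{x^{(i_r)}}$. Telescoping and using $G_{x^{(i_j)}} \sim H_{i_j}$ yields
\[
\sum_{r=1}^j d^{(i_r)} = \dim G - \dim G_{x^{(i_j)}} = \dim G - \dim H_{i_j}.
\]
Since $\kappa = \dim G - \dim H_L$, the inequality reduces to the purely local claim $c^{(i_j)} - 1 \geq \dim H_{i_j} - \dim H_L$.

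Next, I would identify $c^{(i_j)} - 1$ with the dimension of the sphere $(S_{i_1\dots i_j})_{x^{(i_j)}}$ in the normal slice $(\nu_{i_1\dots i_j})_{x^{(i_j)}}$, on which the compact group $G_{x^{(i_j)}} \sim H_{i_j}$ acts orthogonally. By the successive reduction steps already carried out in the construction of the iteration, the isotropy types occurring under this $G_{x^{(i_j)}}$-action include $(H_L)$: for $j<N$ these types are $(H_{i_{j+1}}), \dots, (H_L)$, and for $j=N$ only $(H_L)$ remains, which is precisely why the iteration terminates. Consequently there is a point on the sphere whose $G_{x^{(i_j)}}$-orbit has dimension $\dim G_{x^{(i_j)}} - \dim H_L = \dim H_{i_j} - \dim H_L$, and since this orbit embeds into the sphere, one obtains the required estimate $c^{(i_j)} - 1 \geq \dim H_{i_j} - \dim H_L$.

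The only nontrivial point is justifying that $(H_L)$ does appear as an isotropy type on every intermediate slice sphere, but this is essentially built into the construction: principal $G$-orbits in $M$ intersect every slice by the iterated slice theorem, and after arranging the stabilizer to sit inside $G_{x^{(i_j)}}$ they produce a point of type $(H_L)$ on the corresponding sphere. This observation has already been used in the reduction paragraphs and makes the estimate uniform over all $j = 1, \dots, N$.
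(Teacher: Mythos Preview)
Your argument is correct and is essentially the same geometric idea as the paper's proof: both bound $c^{(i_j)}-1$ from below by the dimension of a principal-type orbit sitting inside the slice sphere $(S_{i_1\dots i_j})_{x^{(i_j)}}$, and both identify $\sum_{r=1}^j d^{(i_r)}$ with a difference of isotropy-algebra dimensions.

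The presentations differ slightly. The paper works with the distributions $E^{(i_l)}$ and $F^{(i_N)}$ introduced earlier, picks the concrete point $m^{(i_{j+1}\dots i_N)}$ (for $\sigma_{i_1}\cdots\sigma_{i_N}\neq 0$, known to be of principal type), computes $\dim G_{x^{(i_j)}}\cdot m^{(i_{j+1}\dots i_N)}=\sum_{l>j}\dim E^{(i_l)}+\dim F^{(i_N)}$, and then invokes the identity $\kappa=\sum_l\dim E^{(i_l)}+\dim F^{(i_N)}$ from \eqref{eq:kappa}. Your route bypasses this machinery entirely: the telescoping $\sum_{r\leq j} d^{(i_r)}=\dim G-\dim H_{i_j}$ reduces the claim immediately to $c^{(i_j)}-1\geq \dim H_{i_j}-\dim H_L$, and then any point of type $(H_L)$ on the sphere (whose existence is indeed recorded in the reduction steps) suffices. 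The two bounds are numerically identical, since $\sum_{l>j}\dim E^{(i_l)}+\dim F^{(i_N)}=\kappa-\sum_{r\leq j}d^{(i_r)}=\dim H_{i_j}-\dim H_L$. Your version is a little more self-contained, while the paper's version has the advantage of reusing the $E^{(i_j)}$, $F^{(i_N)}$ formalism already in place.
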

\begin{proof}
We first note that
\bqn
c^{(i_j)} = \dim (\nu_{i_1\dots i_j})_{x^{(i_j)}} \geq \dim G_{x^{(i_j)}} \cdot m^{(i_{j+1}\dots i_N)} +1.
\eqn
Indeed, $(\nu_{i_1\dots i_j})_{x^{(i_j)}}$ is an orthogonal $G_{x^{(i_j)}}$-space, so that the dimension of the $G_{x^{(i_j)}}$-orbit of $m^{(i_{j+1}\dots i_N)}\in \gamma^{(i_j)}((S^+_{i_1 \dots i_j})_{x^{(i_j)}})$ can be at most $c^{(i_j)}-1$. Now, under the assumption $\sigma_{i_1}\cdots \sigma_{i_N}\not=0$ one has 
\begin{align*}
T_{m^{(i_{j+1}\dots i_N)}} &(G_{x^{(i_j)}} \cdot m^{(i_{j+1}\dots i_N)}) \simeq T_{m^{(i_{1}\dots i_N)}} (G_{x^{(i_j)}} \cdot m^{(i_{1}\dots i_N)}) \\
&= E^{(i_{j+1})}_{m^{(i_1 \dots i_N)}}\oplus  \bigoplus _{k=j+2}^N \tau_{i_{j+1}}\dots \tau_{i_{k-1}} E^{(i_k)} _{m^{(i_1 \dots i_N)}}  \oplus  \tau_{i_{j+1}}\dots \tau_{i_N} F^{(i_N)}_{m^{(i_1 \dots i_N)}}, 
\end{align*}
where the distributions $E^{(i_{j})}$, $F^{(i_{N})}$ where defined in \eqref{eq:EF}. On then computes
\begin{align*}
\dim G_{x^{(i_j)}}  \cdot m^{(i_{j+1}\dots i_N)}= &\dim T_{m^{(i_{j+1}\dots i_N)}} (G_{x^{(i_j)}} \cdot m^{(i_{j+1}\dots i_N)}) \\
=&\sum_{l=j+1} ^N \dim E^{(i_l)} _{m^{(i_1 \dots i_N)}}+ \dim F^{(i_N)}_{m^{(i_1 \dots i_N)}},
\end{align*}
which implies 
\bqn
c^{(i_j)} \geq \sum_{l=j+1} ^N \dim E^{(i_l)}_{m^{(i_1 \dots i_N)}} + \dim F^{(i_N)}_{m^{(i_1 \dots i_N)}} +1.
\eqn
Here we used the same arguments as in the proof of equation \eqref{eq:kappa}. 
On the other hand, one has
\begin{align*}
d^{(i_j)}&=\dim \g_{x^{(i_j)}}^\perp =\dim [\lambda( \g_{x^{(i_j)}}^\perp) \cdot x^{(i_j)} ]=\dim [ \lambda( \g_{x^{(i_j)}}^\perp)  \cdot m^{(i_j\dots i_N)} ]= \dim E^{(i_j)}_{m^{(i_1 \dots i_N)}}.
\end{align*}
The assertion now follows with \eqref{eq:kappa}.
\end{proof}
As a consequence of the lemma, we obtain for $I_{i_1\dots i_N}^2(\mu)$ the estimate
\begin{align}
\label{eq:I2}
\begin{split}
I_{i_1\dots i_N}^2(\mu)&\leq C \int_{(-\epsilon,\epsilon)^N}  \prod_{j=1}^N |\tau_{i_j}|^{c^{(i_j)} + \sum _{r=1}^j d^{(i_r)} -1} \d \tau_{i_N} \dots \d \tau_{i_1} \\
&\leq C \int_{(-\epsilon,\epsilon)^N}  \prod_{j=1}^N |\tau_{i_j}|^{\kappa} \d \tau_{i_N} \dots \d \tau_{i_1} =\frac{2C}{\kappa+1} \epsilon^{N (\kappa+1)}
\end{split}
\end{align}
for some $C>0$. Let us now turn to the integral $I_{i_1\dots i_N}^1(\mu)$. After performing the change of variables $\delta_{i_1\dots i_N}$
one obtains
\begin{align*}
I_{i_1\dots i_N}^1(\mu)&  = \int\limits _{\epsilon < |\tau_{i_j} (\sigma)|< 1} \hspace{-.5cm} J_{\sigma_{i_1}, \dots, \sigma_{i_N}}\Big ( \frac \mu{\tau_{i_1}(\sigma)\cdots \tau_{i_N}(\sigma)} \Big )  \prod_{j=1}^N |\tau_{i_j}(\sigma)|^{c^{(i_j)} + \sum _{r=1}^j d^{(i_r)} -1} \, |\det D\delta_{i_1\dots i_N}(\sigma) | \d \sigma,
\end{align*}
where 
\bqn
 J_{\sigma_{i_1}, \dots, \sigma_{i_N}}(\nu)= \int  e^{i \, ^{(i_1\dots i_N)} \tilde \psi ^{wk}_\sigma /\nu}  \, a_{i_1\dots i_N} \,    \Phi_{i_1\dots i_N}  \d (T^\ast _{m^{(i_1\dots i_N)}} W_{i_j})(\eta)  \bigwedge_j dA^{(i_j)}   \d B^{(i_N)}  \d \tilde v^{(i_N)}  \bigwedge_l  \d x^{(i_{l})} .
\eqn
Here we denoted by $ ^{(i_1\dots i_N)} \tilde \psi ^{wk}_\sigma $ the weak transform of the phase function $\psi$ as a function of the variables $ x^{(i_j)},  \tilde v^{(i_N)}, \alpha^{(i_j)}, \beta^{(i_N)}, p$ alone, while the variables $\sigma=(\sigma_{i_1},\dots \sigma_{i_N})$ are regarded as parameters. The idea is now to make use of the principle of the stationary phase to give an asymptotic expansion of $J_{\sigma_{i_1}, \dots, \sigma_{i_N}}(\nu)$. 

\begin{theorem}[Generalized stationary phase theorem for manifolds]
\label{thm:SP}
Let $M$ be a  $n$-dimensional Riemannian manifold,  $\psi \in \Cinft(M)$ be a real valued phase function,  $\mu >0$, and set
\bqn
I({\mu})=\int_M e^{i\psi(m)/\mu} a(m) \, dm,
\eqn
where $a(m)dm$ denotes  a compactly supported $\Cinft$-density on $M$. Let
$$\mathcal C=\mklm{m \in M: \psi_\ast:T_mM \rightarrow T_{\psi(m)}\R \text{ is zero}}$$
 be the  critical set of the phase function $\psi$, and assume that
\begin{enumerate}
\item $\mathcal{C}$ is a smooth submanifold of $M$  of dimension $p$ in a neighborhood of the support of $a$;
\item for all $m \in \mathcal{C}$, the restriction $\psi''(m)_{|N_m\mathcal{C}}$ of the Hessian of $\psi$ at the point $m$  to the normal space $N_m\mathcal{C}$ is a non-degenerate quadratic form.
\end{enumerate}
\noindent
Then, for all $N \in \N$, there exists a constant $C_{N,\psi}>0$ such that
\bqn
|I(\mu) - e^{i\psi_0/\mu}(2\pi \mu)^{\frac {n-p}{2}}\sum_{j=0} ^{N-1} \mu^j Q_j (\psi;a)| \leq C_{N,\psi} \mu^N \vol (\supp a \cap \mathcal{C}) \sup _{l\leq 2N} \norm{D^l a }_{\infty,M},
\eqn
where $D^l$ is a differential operator on $M$ of order $l$, and $\psi_0$ is the constant value of $\psi$ on $\mathcal{C}$. Furthermore, for each $j$ there exists a constant $\tilde C_{j,\psi}>0$ such that 
\bqn
|Q_j(\psi;a)|\leq \tilde C_{j,\psi}  \vol (\supp a \cap \mathcal{C}) \sup _{l\leq 2j} \norm{D^l a }_{\infty,\mathcal{C}},
\eqn
and, in particular,
\bqn
Q_0(\psi;a)= \int _{\mathcal{C}} \frac {a(m)}{|\det \psi''(m)_{|N_m\mathcal{C}}|^{1/2}} d\sigma_{\mathcal{C}}(m) e^{ i \pi\sigma_{\psi''}},
\eqn
where $\sigma_{\psi''}$ is the constant value of the signature of $\psi''(m)_{|N_m\mathcal{C}}$ for $m$ in $\mathcal{C}$.
\end{theorem}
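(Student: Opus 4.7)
The plan is to reduce the theorem to the classical Euclidean stationary phase formula by a partition of unity, a localization to a tubular neighborhood of $\mathcal{C}$, and a Morse-type normal form for $\psi$ along the normal directions to $\mathcal{C}$.

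First, I would cut $I(\mu)$ into two pieces with a smooth cutoff $\chi \in \CT(M)$ which equals $1$ in a tubular neighborhood $U$ of $\supp a \cap \mathcal{C}$ and is supported in a slightly larger neighborhood. On $\supp((1-\chi)a)$ the differential $d\psi$ does not vanish, so a standard non-stationary phase argument (repeated integration by parts via the operator $L=\frac{\mu}{i}|d\psi|^{-2}\sum_i \gd_i \psi \,\gd_i$, suitably globalized using a partition of unity and a Riemannian metric) shows that this piece is $O(\mu^N)$ for every $N$, with the required bounds. Hence it suffices to analyze the contribution from a neighborhood of $\mathcal{C}$.

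Next, around any $m_0 \in \mathcal{C}$ I would choose local coordinates $(x,y) \in \R^p \times \R^{n-p}$ adapted to $\mathcal{C}$, meaning $\mathcal{C}$ is locally given by $\mklm{y=0}$ and the $\gd_{y_j}$ span the normal bundle. By hypothesis (2) the transversal Hessian
\bqn
H(x) = \bigl(\gd_{y_i}\gd_{y_j}\psi(x,0)\bigr)_{i,j}
\eqn
is non-degenerate for $x$ in the chart. Since $\psi$ is constant, equal to $\psi_0$, on the connected components of $\mathcal{C}$ that meet $\supp a$ (as $d\psi$ vanishes on $\mathcal{C}$), and $\gd_y\psi(x,0) =0$, one can apply the Morse lemma with parameters: there exists a smooth family of local diffeomorphisms $y\mapsto \Phi_x(y)$ depending smoothly on $x$, with $\Phi_x(0)=0$ and $D\Phi_x(0) = \id$, such that
\bqn
\psi(x,\Phi_x(y)) = \psi_0 + \tfrac12 \langle H(x) y, y\rangle.
\eqn
Pulling back the density $a(m)\,dm$ through this change of variables transforms the local integral into
\bqn
e^{i\psi_0/\mu}\int \!\!\int e^{i\langle H(x) y,y\rangle/(2\mu)}\, \tilde a(x,y)\,dx\,dy,
\eqn
where $\tilde a$ is compactly supported and smooth, and incorporates the Jacobian of $\Phi_x$ together with the original density factor.

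Finally, I would apply the classical Euclidean stationary phase theorem fiberwise in $y$, for each fixed $x$. This yields an asymptotic expansion in powers of $\mu$, starting at $\mu^{(n-p)/2}$, with leading coefficient
\bqn
(2\pi\mu)^{(n-p)/2}\, \frac{\tilde a(x,0)}{|\det H(x)|^{1/2}}\, e^{i\pi \sigma_{\psi''}(x)/4},
\eqn
together with uniform remainder estimates controlled by finitely many derivatives of $\tilde a$ (hence of $a$); integration in $x$ over $\mathcal{C}$ reproduces the stated formula for $Q_0$ provided one checks that $\tilde a(x,0)\,dx$ equals the naturally induced density $a\,d\sigma_{\mathcal{C}}$ on $\mathcal{C}$, which follows because $D\Phi_x(0)=\id$ so the Jacobian is $1$ on $\mathcal{C}$. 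Patching the local expansions through a partition of unity on $\mathcal{C}$ gives the full statement; the signature $\sigma_{\psi''}$ is locally constant on $\mathcal{C}$ by continuity of eigenvalues of $H(x)$ and non-degeneracy.

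The main obstacle is the Morse lemma with parameters that produces $\Phi_x$ depending smoothly on $x$ and reducing $\psi$ to its transversal quadratic form. The existence of such a family is standard (it is proved by the Moser path method, or by a parametric Morse lemma as in H\"ormander's treatise), but one must verify that, under only the clean critical set hypothesis (1)--(2), the construction goes through intrinsically on the manifold, independently of the chart; equivalently, one works in a tubular neighborhood of $\mathcal{C}$ identified via the exponential map with a neighborhood of the zero section of the normal bundle $N\mathcal{C}$, on whose fibers $\psi$ has a non-degenerate critical point at the origin. The rest of the argument is bookkeeping: tracking densities, uniformity of the remainder, and the behavior of the signature.
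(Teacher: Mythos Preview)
Your sketch is essentially correct and follows the standard route (non-stationary phase away from $\mathcal{C}$, parametric Morse lemma along the normal fibers, then the Euclidean stationary phase expansion with $x$ as a parameter). Note, however, that the paper does not actually prove this theorem: its proof is simply a pointer to H\"ormander's Theorem 7.7.5, Combescure--Ralston--Robert, and Varadarajan, treating the result as known. Your argument is precisely the content behind those references, so there is no substantive difference in strategy; you have just unpacked what the paper takes for granted. One cosmetic point: your leading phase factor $e^{i\pi\sigma_{\psi''}/4}$ is the standard one, whereas the statement as written in the paper has $e^{i\pi\sigma_{\psi''}}$; this is a discrepancy in the paper's statement rather than in your argument.
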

\begin{proof}
See for instance H\"{o}rmander, \cite{hoermanderI}, Theorem 7.7.5, together with Combescure-Ralston-Robert \cite{combescure-ralston-robert}, Theorem 3.3, as well as Varadarajan \cite{varadarajan97}, pp. 199.
\end{proof}
\begin{remark}
\label{rmk:A}
An examination of the proof of the foregoing theorem shows that the constants $C_{N,\psi}$ are essentially bounded from above by 
\bqn 
\sup_{m \in \mathcal{C} \cap \supp a} \norm {\Big ( \psi''(m)_{|N_m\mathcal{C}}\Big ) ^{-1}}.
\eqn
Indeed, let $\alpha:(x,y) \rightarrow m \in \mathcal{O}\subset M$ be local normal coordinates such that $\alpha(x,y) \in \mathcal{C}$ if, and only if, $y=0$. By \eqref{eq:Hess}, the transversal Hessian $\mathrm{Hess} \, \psi(m)_{|N_m\mathcal{C}}$ is given in these coordinates by the matrix
\bqn 
\Big (\gd _{y_k} \gd _{y_l} (\psi \circ \alpha)(x,0) \Big )_{k,l} \
\eqn
where $m =\alpha (x,0)$. If now the transversal Hessian of $\psi$ is non-degenerate at the point $m=\alpha(x,0)$, then $y=0$ is a non-degenerate critical point of the function  $y \mapsto (\psi \circ \alpha)(x,y)$, and therefore an isolated critical point by the lemma of Morse. As a consequence,  
\bq
\label{eq:est}
\frac{|y|}{|\gd_y (\psi \circ \alpha)(x,y)|} \leq 2 \norm {\Big (\gd_{y_k} \gd _{y_l} (\psi\circ \alpha)(x,0)\Big ) _{k,l}^{-1}}
\eq
for $y$ close to zero. The assertion now follows by applying H\"{o}rmander \cite{hoermanderI}, Theorem 7.7.5,  to the integral
\bqn 
\int_{\alpha^{-1}(\mathcal{O})} e^{i (\psi \circ \alpha) (x,y)/\mu} (a \circ \alpha)(x,y) \d y \d x
\eqn
in the variable $y$, and with $x$ as a parameter, since in our situation the constant $C$ occuring in H\"{o}rmander \cite{hoermanderI}, equation (7.7.12), is precisely bounded by \eqref{eq:est}, if we assume as we may that $a$ is supported near $\mathcal{C}$. A similar observation holds with respect to the constants $\tilde C_{j,\psi}$.
\end{remark}
We arrive now at the following
\begin{theorem}
\label{thm:J}
Let $\sigma=(\sigma_{i_1},\dots, \sigma_{i_N})$ be a fixed set of parameters. Then, for every $\tilde N \in \N$ there exists a constant $C_{\tilde N, ^{(i_1\dots i_N)} \tilde \psi ^{wk}_\sigma}>0$ such that 
\bqn
|J_{\sigma_{i_1},\dots ,\sigma_{i_N}} (\nu) -(2\pi |\nu|)^\kappa\sum_{j=0} ^{\tilde N-1} |\nu|^j Q_j (^{(i_1\dots i_N)} \tilde \psi ^{wk}_\sigma;a_{i_1\dots i_N} \Phi_{i_1\dots i_N})| \leq C_{\tilde N,^{(i_1\dots i_N)} \tilde \psi ^{wk}_\sigma} |\nu|^{\tilde N},
\eqn
with estimates for the coefficients $Q_j$, and an explicit expression for $Q_0$. Moreover,  the constants $C_{\tilde N, ^{(i_1\dots i_N)} \tilde \psi ^{wk}_\sigma}$ and the coefficients $Q_j$ have uniform bounds in $\sigma$.
\end{theorem}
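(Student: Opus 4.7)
The plan is to apply Theorem \ref{thm:SP} (generalized stationary phase for manifolds) to $J_{\sigma_{i_1},\dots,\sigma_{i_N}}(\nu)$ for each fixed value of $\sigma\in(-1,1)^N$, and then harvest uniformity from the compactness of the $\sigma$--domain. The ambient manifold of integration is a product involving $T^\ast_{m^{(i_1\dots i_N)}}W_{i_j}$, the algebra variables $A^{(i_j)}, B^{(i_N)}$, the sphere variable $\tilde v^{(i_N)}$, and the base points $x^{(i_l)}$, and the amplitude $a_{i_1\dots i_N}\,\Phi_{i_1\dots i_N}$ is compactly supported. The two hypotheses of Theorem \ref{thm:SP} are delivered by the two fundamental theorems: the first identifies $\Crit(^{(i_1\dots i_N)}\phw_\sigma)$ as a $\Cinft$--submanifold, and a dimension count gives codimension $2\kappa$; the second shows that the transversal Hessian is non-degenerate at every point of this critical set. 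Since $(n-p)/2=\kappa$, the stationary phase theorem directly yields the expansion with leading factor $(2\pi|\nu|)^\kappa$, the explicit formula for $Q_0$ in terms of a density on $\Crit(^{(i_1\dots i_N)}\phw_\sigma)$, $|\det ^{(i_1\dots i_N)}\phw_\sigma{''}_{|N\Crit}|^{-1/2}$, and the signature $\sigma_{\psi''}$, plus the stated bounds for the $Q_j$.

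For the uniformity in $\sigma$ I would invoke Remark \ref{rmk:A}: both the remainder constants $C_{\tilde N,\psi}$ and the coefficient bounds are controlled by a product of $\sup_{l\leq 2\tilde N}\norm{D^l(a_{i_1\dots i_N}\Phi_{i_1\dots i_N})}_{\infty}$, of the volume of $\supp a_{i_1\dots i_N}\cap\Crit$, and of the quantity $\sup_{\Crit\cap\,\supp a_{i_1\dots i_N}}\norm{(^{(i_1\dots i_N)}\phw_\sigma{''}_{|N\Crit})^{-1}}$. Since $a_{i_1\dots i_N}$ and $\Phi_{i_1\dots i_N}$ are smooth in all their variables and in $\sigma$, and their supports are compact, the amplitude factor is trivially uniform in $\sigma\in[-1,1]^N$. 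The characterization \eqref{eq:C} of the critical set is moreover independent of $\sigma$ except through the base point $m^{(i_1\dots i_N)}=m^{(i_1\dots i_N)}(\sigma)$, so $\Crit(^{(i_1\dots i_N)}\phw_\sigma)$ is realized as a smoothly $\sigma$--parametrized family of submanifolds of a fixed ambient space, all of the same dimension.

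The main obstacle is the uniform non-degeneracy of the transversal Hessian, and this is exactly where the combined force of the first and second fundamental theorems is needed. For $\sigma_{i_1}\cdots\sigma_{i_N}\neq0$ the sequence of monoidal transformations is a diffeomorphism, so the transversal Hessian of $^{(i_1\dots i_N)}\phw_\sigma$ is conjugate, up to a non-vanishing smooth factor, to the transversal Hessian of $\psi$ on $\mathrm{Reg}\,\Crit(\psi)$, whose non-degeneracy is provided by Lemma \ref{lemma:Reg}. For the boundary stratum $\sigma_{i_1}\cdots\sigma_{i_N}=0$, Proposition \ref{prop:1} shows that the kernel of the Hessian of the reduced function $^{(i_1\dots i_N)}\phw_{\sigma_{i_j},x^{(i_j)},\tilde v^{(i_N)}}$ coincides with the tangent space to its critical set, and Lemma \ref{lemma:A} then transfers this to a genuine non-degeneracy of the transversal Hessian. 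Viewing the Hessian as a continuous section of the bundle of symmetric bilinear forms on $N\Crit$ over the compact parameter space $[-1,1]^N$ times the compact set $\supp a_{i_1\dots i_N}\cap\Crit$, the absence of degenerate points yields, by continuity and compactness, a uniform positive lower bound on the absolute values of the eigenvalues, hence a uniform upper bound on the inverse. The signature $\sigma_{\psi''}$ is a $\Z$--valued continuous function on the same compact set and is therefore locally constant, so the explicit expression for $Q_0$ depends continuously on $\sigma$ and is likewise uniformly bounded. Feeding these estimates into Theorem \ref{thm:SP} via Remark \ref{rmk:A} yields the desired uniform bounds and completes the proof.
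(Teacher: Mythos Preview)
Your proposal is correct and follows essentially the same approach as the paper: apply Theorem \ref{thm:SP} after verifying its hypotheses via the two fundamental theorems, then extract uniformity in $\sigma$ from Remark \ref{rmk:A} together with compactness. The only cosmetic difference is that the paper makes the uniformity step slightly more explicit by invoking Lemma \ref{lemma:A} to identify the transversal Hessian of $\,^{(i_1\dots i_N)}\tilde\psi^{wk}_\sigma$ at each point with that of the full function $\,^{(i_1\dots i_N)}\tilde\psi^{wk}$ on $N\Crit(\,^{(i_1\dots i_N)}\tilde\psi^{wk})$, so that a single supremum over the compact set $\supp a_{i_1\dots i_N}\cap\Crit(\,^{(i_1\dots i_N)}\tilde\psi^{wk})$ replaces your parametrized compactness argument.
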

\begin{proof}
As a consequence of the fundamental theorems, and Lemma \ref{lemma:A}, together with the observations preceding Proposition \ref{prop:1}, the phase function $^{(i_1\dots i_N)} \tilde \psi ^{wk}_\sigma$ has a clean critical set, meaning that 
\begin{itemize}
\item the critical set $\Crit( ^{(i_1\dots i_N)} \tilde \psi ^{wk}_\sigma )$ is a $\Cinft$-submanifold of codimension $2\kappa$ for arbitrary $\sigma$;
\item  the transversal Hessian
\bqn
\mathrm{Hess}  \,^{(i_1\dots i_N)} \tilde \psi ^{wk}_\sigma ( x^{(i_j)},  \tilde v^{(i_N)}, \alpha^{(i_j)}, \beta^{(i_N)},p)_{|N_{  (x^{(i_j)},  \tilde v^{(i_N)}, \alpha^{(i_j)}, \beta^{(i_N)},p)}\Crit \big ( ^{(i_1\dots i_N)} \tilde \psi ^{wk}_\sigma \big)}
\eqn 
defines a non-degenerate symmetric bilinear form for arbitrary $\sigma$ at every point  of the critical set of $^{(i_1\dots i_N)} \tilde \psi ^{wk}_\sigma$.
\end{itemize}
Thus, the necessary conditions for applying the principle of the stationary phase to the integral $J_{\sigma_{i_1},\dots,\sigma_{i_N}}(\nu)$ are fulfilled, and we obtain the desired asymptotic expansion by  Theorem \ref{thm:SP}. To see  the existence of the uniform bounds, note that by  Remark \ref{rmk:A} we have
\bqn 
C_{\tilde N, ^{(i_1\dots i_N)} \tilde \psi ^{wk}_\sigma} \leq C'_{\tilde N} \sup_{ x^{(i_j)},  \tilde v^{(i_N)}, \alpha^{(i_j)}, \beta^{(i_N)},p} \norm{\Big ({\mathrm{Hess} \,  ^{(i_1\dots i_N)} \tilde \psi ^{wk}_\sigma}_{|N \Crit ( ^{(i_1\dots i_N)} \tilde \psi ^{wk}_\sigma)}\Big ) ^{-1}}.
\eqn
But since by Lemma \ref{lemma:A} the transversal Hessian 
\bqn 
{\mathrm{Hess} \,  ^{(i_1\dots i_N)} \tilde \psi ^{wk}_\sigma}_{|N_{ (x^{(i_j)},  \tilde v^{(i_N)}, \alpha^{(i_j)}, \beta^{(i_N)},p ) } \Crit ( ^{(i_1\dots i_N)} \tilde \psi ^{wk}_\sigma)}
\eqn 
is given by 
\bqn 
{\mathrm{Hess} \,  ^{(i_1\dots i_N)} \tilde \psi ^{wk}}_{|N_{ (\sigma_{i_j}, x^{(i_j)},  \tilde v^{(i_N)}, \alpha^{(i_j)}, \beta^{(i_N)},p ) } \Crit ( ^{(i_1\dots i_N)} \tilde \psi ^{wk})},
\eqn 
we finally obtain the estimate
\bqn 
C_{\tilde N, ^{(i_1\dots i_N)} \tilde \psi ^{wk}_\sigma} \leq C'_{\tilde N} \sup_{\sigma_{i_j},  x^{(i_j)},  \tilde v^{(i_N)}, \alpha^{(i_j)}, \beta^{(i_N)},p} \norm{\Big ({\mathrm{Hess} \,  ^{(i_1\dots i_N)} \tilde \psi ^{wk}}_{|N \Crit ( ^{(i_1\dots i_N)} \tilde \psi ^{wk})}\Big ) ^{-1}} \leq C_{\tilde N, {i_1\dots i_N}}
\eqn
by a constant independent of $\sigma$. Similarly, one can show the existence of bounds of the form
\bqn 
|Q_j(^{(i_1\dots i_N)} \tilde \psi ^{wk}_\sigma; a_{i_1\dots i_N} \Phi_{i_1\dots i_N}) |\leq \tilde C_{j, {i_1\dots i_N}},
\eqn
with constants $\tilde C_{j, {i_1\dots i_N}}$ independent of $\sigma$. 
\end{proof}

\begin{remark}
Before going on, let us remark that for the computation of the integrals $I_{i_1\dots i_N}^1(\mu)$ it is only necessary to have an asymptotic expansion for the integrals $J_{\sigma_{i_1},\dots,\sigma_{i_N}} (\nu)$ in the case that $\sigma_{i_1} \cdots \sigma_{i_N}\not=0$, which can also be  obtained without the fundamental theorems using only the factorization of the phase function $\psi$ given by the resolution process, together with Lemma \ref{lemma:Reg}. Nevertheless, the main consequence to be drawn from the fundamental theorems is that the constants $C_{\tilde N, ^{(i_1\dots i_N)} \tilde \psi ^{wk}_\sigma}$ and the coefficients $Q_j$ in Theorem \ref{thm:J} have uniform bounds in $\sigma$. 
\end{remark}

As a consequence of Theorem \ref{thm:J}, we obtain for arbitrary $\tilde N\in \N$
\begin{align*}
|J_{\sigma_{i_1},\dots, \sigma_{i_N}} (\nu) & -(2\pi |\nu|)^\kappa Q_0( ^{(i_1\dots i_N)} \tilde \psi ^{wk}_\sigma;a_{i_1\dots i_N} \Phi_{i_1\dots i_N})| \\&\leq 
\Big |J_{\sigma_{i_1},\dots ,\sigma_{i_N}} (\nu) -(2\pi |\nu|)^\kappa\sum_{l=0} ^{\tilde N-1} |\nu|^l Q_l (^{(i_1\dots i_N)} \tilde \psi ^{wk}_\sigma;a_{i_1\dots i_N} \Phi_{i_1\dots i_N})\Big | \\ & + (2\pi |\nu|)^\kappa \sum_{l=1} ^{\tilde N-1} |\nu|^l |Q_l (^{(i_1\dots i_N)} \tilde \psi ^{wk}_\sigma;a_{i_1\dots i_N} \Phi_{i_1\dots i_N})| \leq c_1 |\nu|^{\tilde N}+c_2 |\nu|^\kappa \sum_{l=1}^{\tilde N-1} |\nu|^l
\end{align*}
with constants $c_i>0$ independent  of both $\sigma$ and $\nu$.
From this we deduce
\begin{align*}
\Big |I_{i_1\dots i_N}^1&(\mu)  - (2\pi \mu)^{\kappa}  \int _{\epsilon < |\tau_{i_j}(\sigma) |< 1}   Q_0 \prod_{j=1}^N |\tau_{i_j}(\sigma)|^{c^{(i_j)} + \sum _{r=1}^j d^{(i_r)} -1-\kappa}  |\det D\delta_{i_1\dots i_N}(\sigma) | \d \sigma \Big| \\&\leq c_3 \mu^{\tilde N}   \int_{\epsilon < |\tau_{i_j}(\sigma) |< 1}   \prod_{j=1}^N |\tau_{i_j}(\sigma)|^{c^{(i_j)} + \sum _{r=1}^j d^{(i_r)} -1-\tilde N} \, |\det D\delta_{i_1\dots i_N}(\sigma) | \d \sigma\\
&+ c_4 \mu^{\kappa} \sum_{l=1}^{\tilde N-1} \mu ^l  \int_{\epsilon < |\tau_{i_j}(\sigma) |< 1}   \prod_{j=1}^N |\tau_{i_j}(\sigma)|^{c^{(i_j)} + \sum _{r=1}^j d^{(i_r)} -1-\kappa -l} \, |\det D\delta_{i_1\dots i_N}(\sigma) | \d \sigma\\
& \leq c_5 \mu^{\tilde N} \max \Big \{1, \prod _{j=1}^N  \epsilon^{c^{(i_j)} + \sum _{r=1}^j d^{(i_r)} -\tilde N} \Big \} +c_6 \sum_{l=1}^{\tilde N -1} \mu^{\kappa +l} \max \Big \{1, \prod _{j=1}^N  \epsilon^{c^{(i_j)} + \sum _{r=1}^j d^{(i_r)} -\kappa -l} \Big \}.
\end{align*}
We now set
\bqn 
\epsilon=\mu^{1/N}.
\eqn
Taking into account Lemma \ref{lemma:kappa}, one infers  that the right hand side of the last inequality can be estimated by 
\bqn 
c_5 \max \mklm{ \mu^{\tilde N}, \mu^{\kappa +1}} + c_6 \sum _{l=1}^{\tilde N-1} \max \mklm{ \mu^{\kappa +l}, \mu^{\kappa +1}},
\eqn
so that for sufficiently large $\tilde N \in \N$ we finally obtain an asymptotic expansion for $I_{i_1\dots i_N}(\mu)$ by taking into account \eqref{eq:I2}, and the fact that 
\bqn 
(2\pi \mu)^\kappa \int_{0 < |\tau_{i_j} |< \mu^{1/N}}  Q_0  \prod_{j=1}^N |\tau_{i_j}|^{c^{(i_j)} + \sum _{r=1}^j d^{(i_r)} -1} \d \tau_{i_N} \dots \d \tau_{i_1} = O(\mu^{\kappa+1}). 
\eqn
\begin{theorem}
Let the assumptions of the first fundamental theorem be fulfilled. Then 
\bqn 
I_{i_1\dots i_N}(\mu)=(2 \pi \mu)^\kappa L_{i_1\dots i_N}+ O(\mu^{\kappa+1}),
\eqn
where the leading coefficient $L_{i_1\dots i_N}$ is given by 
\bq
\label{eq:L}
L_{i_1\dots i_N}=\int_{\Crit( ^{(i_1\dots i_N)} \tilde \psi^{wk})} \frac { a_{i_1\dots i_N} \Phi_{i_1\dots i_N} \, d\Crit( ^{(i_1\dots i_N)} \tilde \psi^{wk})} {|\mathrm{Hess} ( ^{(i_1\dots i_N)} \tilde \psi^{wk})_{N\Crit( ^{(i_1\dots i_N)} \tilde \psi^{wk})}|^{1/2}},
\eq
where $ d\Crit( ^{(i_1\dots i_N)} \tilde \psi^{wk})$ denotes the induced Riemannian measure.
\end{theorem}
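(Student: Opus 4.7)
The plan is to assemble the pieces laid out in the preceding sections — the decomposition $I_{i_1\dots i_N}(\mu)=I^{1}_{i_1\dots i_N}(\mu)+I^{2}_{i_1\dots i_N}(\mu)$ across the cutoff $\epsilon$, the estimate \eqref{eq:I2} for $I^2$, Lemma \ref{lemma:kappa}, the uniform-in-$\sigma$ asymptotic expansion of Theorem \ref{thm:J} for $J_{\sigma_{i_1},\dots,\sigma_{i_N}}(\nu)$, and the explicit form of $Q_0$ in Theorem \ref{thm:SP} — and optimize the cutoff at $\epsilon=\mu^{1/N}$.

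First I would treat $I^{2}_{i_1\dots i_N}(\mu)$: by Lemma \ref{lemma:kappa}, each exponent $c^{(i_j)}+\sum_{r=1}^{j}d^{(i_r)}-1$ is $\geq \kappa$, so $I^{2}_{i_1\dots i_N}(\mu)\leq C\epsilon^{N(\kappa+1)}$, and inserting $\epsilon=\mu^{1/N}$ absorbs this contribution into $O(\mu^{\kappa+1})$. For $I^{1}_{i_1\dots i_N}(\mu)$, after the change of variables $\delta_{i_1\dots i_N}$, I would insert the asymptotic expansion from Theorem \ref{thm:J} with parameter $\nu=\mu/(\tau_{i_1}(\sigma)\cdots\tau_{i_N}(\sigma))$. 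Using the uniform-in-$\sigma$ bounds on the remainder constants $C_{\tilde N,\,^{(i_1\dots i_N)}\tilde\psi^{wk}_\sigma}$ and on the coefficients $Q_j$ — which is precisely the output of the second fundamental theorem combined with Remark \ref{rmk:A} — the error splits into two pieces: one of order $\mu^{\tilde N}\int|\tau(\sigma)|^{c+\sum d-1-\tilde N}$, and one of order $\mu^{\kappa+l}\int|\tau(\sigma)|^{c+\sum d-1-\kappa-l}$ for $1\leq l\leq \tilde N-1$.

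With $\epsilon=\mu^{1/N}$, each of these integrals is bounded by $\max\bigl\{1,\epsilon^{c+\sum d-\tilde N}\bigr\}$ or $\max\bigl\{1,\epsilon^{c+\sum d-\kappa-l}\bigr\}$ respectively. Invoking Lemma \ref{lemma:kappa} once more, the worst-case exponent is $\kappa+1$ in $\mu$, and choosing $\tilde N$ large enough renders all other terms negligible; the leading integral itself is likewise truncated away from $|\tau_{i_j}|<\epsilon$ at the cost of $O(\mu^{\kappa+1})$, as displayed at the end of the computation preceding the theorem. This produces
\bqn
I_{i_1\dots i_N}(\mu) = (2\pi\mu)^\kappa \int_{(-1,1)^N} Q_0\bigl(\,^{(i_1\dots i_N)}\tilde\psi^{wk}_\sigma;\, a_{i_1\dots i_N}\Phi_{i_1\dots i_N}\bigr)\,\prod_{j=1}^N |\tau_{i_j}(\sigma)|^{c^{(i_j)}+\sum_{r=1}^j d^{(i_r)}-1-\kappa}\,|\det D\delta_{i_1\dots i_N}(\sigma)|\,d\sigma + O(\mu^{\kappa+1}).
\eqn

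The final step is to identify the $\sigma$-integral with $L_{i_1\dots i_N}$ as written in \eqref{eq:L}. I would use the explicit form of $Q_0$ from Theorem \ref{thm:SP}, namely an integral over $\Crit(\,^{(i_1\dots i_N)}\tilde\psi^{wk}_\sigma)$ of $a_{i_1\dots i_N}\Phi_{i_1\dots i_N}/|\det \mathrm{Hess}|^{1/2}$ against the induced Riemannian measure on the fibre critical set. By Lemma \ref{lemma:A} together with the fibre-bundle picture established in Proposition \ref{prop:1} (the full critical manifold $\Crit(\,^{(i_1\dots i_N)}\tilde\psi^{wk})$ is fibered over the $\sigma$-coordinates by the fibre critical sets, and the transversal Hessian agrees with the fibrewise transversal Hessian), a Fubini argument together with the Jacobian weights $\prod|\tau_{i_j}(\sigma)|^{c^{(i_j)}+\sum d^{(i_r)}-1-\kappa}|\det D\delta_{i_1\dots i_N}|$ — which precisely match the factors relating the product measure $d\sigma\,d(\text{fibre})$ to the induced Riemannian measure on $\Crit(\,^{(i_1\dots i_N)}\tilde\psi^{wk})$ — reassembles the iterated integral into the single integral in \eqref{eq:L}. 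The only delicate point is bookkeeping of these measure-theoretic Jacobians and verifying that the signature factor $e^{i\pi\sigma_{\psi''}}$ is constant on each connected component of $\Crit$, absorbed into $L_{i_1\dots i_N}$; the analytic substance has already been supplied by the two fundamental theorems and by Theorem \ref{thm:J}.
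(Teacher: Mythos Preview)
Your proposal is correct and follows essentially the same approach as the paper: the same $I^1/I^2$ split at cutoff $\epsilon$, the same use of Lemma~\ref{lemma:kappa} for $I^2$, the same insertion of the uniform-in-$\sigma$ expansion from Theorem~\ref{thm:J} into $I^1$ after the change of variables $\delta_{i_1\dots i_N}$, and the same optimization $\epsilon=\mu^{1/N}$. The paper places the \qed\ immediately after the theorem statement, treating the identification of the $\sigma$-integral of $Q_0$ with the single integral~\eqref{eq:L} as evident; your Fubini sketch for that step supplies welcome detail the paper omits.
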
\qed

\section{Statement of the main result}
\label{sec:8}

Let us now return to our departing point, that is, the asymptotic behavior of the integral $I(\mu)$ introduced in \eqref{int}. For this, we still have to examine the contributions to $I(\mu)$ coming from integrals of the form
\begin{gather*}
\begin{split}
\tilde I_{i_1\dots i_\Theta}(\mu) = \qquad \qquad \qquad \qquad\qquad \qquad\qquad \qquad\qquad \qquad \\ \int_{M_{i_1}(H_{i_1})\times (-1,1)} \Big [ \int_{\gamma^{(i_1)}((S^+_{i_1})_{x^{(i_1)}})_{i_2}(H_{i_2})\times (-1,1)} \dots \Big [  \int_{\gamma^{(i_{\Theta-1})}((S^+_{i_1\dots i_{\Theta-1}})_{x^{(i_{\Theta-1})}})_{i_{\Theta}}(H_{i_{\Theta}})\times (-1,1)} \\
\Big [ \int_{\gamma^{(i_{\Theta})}((S_{i_1\dots i_{\Theta}}^+)_{x^{(i_\Theta)}})\times \g_{x^{(i_{\Theta})}}\times \g_{x^{(i_{\Theta})}}^\perp \times \cdots \times \g_{x^{(i_{1})}}^\perp\times T^\ast _{m^{(i_1\dots i_\Theta)}} W_{i_1}} e^{i\frac {\tau_1 \dots \tau_\Theta}\mu \, ^{(i_1\dots i_\Theta)} \tilde \psi ^{wk}}  \, a_{i_1\dots i_\Theta} \,   \tilde \Phi_{i_1\dots i_\Theta} \\
 \d (T^\ast _{m^{(i_1\dots i_\Theta)}} W_{i_1})(\eta)  \d A^{(i_1)} \dots  \d A^{(i_\Theta)}  \d B^{(i_\Theta)} \d \tilde v^{(i_\Theta)} \Big ]  \d \tau_{i_\Theta} \d x^{(i_{\Theta})} \dots  \Big ] \d \tau_{i_2} \d x^{(i_{2})} \Big ]\d \tau_{i_1} \d x^{(i_{1})},
\end{split}
\end{gather*}
where $((H_{i_1}), \dots, (H_{i_\Theta}))$ is an arbitrary isotropy branch, and 
\bqn 
a_{i_1\dots i_\Theta}=[ a \, \chi_{i_1}  \circ (\id_{fiber} \otimes \zeta_{i_1} \circ \zeta_{i_1i_2} \circ \dots \circ \zeta_{i_1\dots i_\Theta})] \, [ \chi_{i_1i_2} \circ \zeta_{i_1i_2} \circ \dots\circ \zeta_{i_1\dots i_\Theta}  ] \dots [\chi_{i_1\dots i_\Theta} \circ \zeta_{i_1\dots i_\Theta}]
\eqn
is supposed to have compact support in one of the $\alpha^{(i_\Theta)}$-charts, and
\begin{align*}
\tilde \Phi_{i_1\dots i_\Theta} &=\prod_{j=1}^\Theta |\tau_{i_j}|^{c^{(i_j)}+\sum_r^j d^{(i_r)}-1}\Phi_{i_1\dots i_\Theta},
\end{align*}
 $\Phi_{i_1\dots i_\Theta}$ being a smooth function which does not depend on the variables $\tau_{i_j}$. Now, a computation of the $\xi$-derivatives of $\, ^{(i_1\dots i_\Theta)} \tilde \psi ^{wk}$ in any of the $\alpha^{(i_\Theta)}$-charts shows that $\, ^{(i_1\dots i_\Theta)} \tilde \psi ^{wk}$ has no critical points  there. 
 By the non-stationary phase theorem, see H\"{o}rmander \cite{hoermanderI}, Theorem 7.7.1, one then computes for arbitrary $\tilde N \in \N$
 \begin{align*}
| \tilde I_{i_1 \dots i_\Theta}(\mu)| \leq c_7 \mu^{\tilde N} \int_{\epsilon<|\tau_{i_j}| < 1} \prod_{j=1} ^\Theta |\tau_{i_j}|^{c^{(i_j)}+\sum_r^j d^{(i_r)}-1-\tilde N}d\tau + c_8 \epsilon^{\Theta (\kappa+1)} \leq c_9 \max\mklm{\mu^{\tilde N},\mu^{\kappa+1}},
 \end{align*}
 where we took $\epsilon=\mu^{1/\Theta}$. Choosing $\tilde N$ large enough, we conclude that
 \bqn
 | \tilde I_{i_1 \dots i_\Theta}(\mu)| =O(\mu^{\kappa+1}).
 \eqn
  As a consequence of this we see that, up to terms of order $O(\mu^{\kappa+1})$,  $I(\mu)$ can be written as a sum
\begin{align*}
I(\mu)&=\sum_{k<L} I_k(\mu)+I_L(\mu) =\sum_{k<l<L} I_{kl}(\mu) + \sum_{k<L} I_{kL}(\mu)  +I_L(\mu)\\
&=\sum _N\sum_{i_1<\dots< i_N<  i_{N+1} =L} I_{i_1\dots i_N}( \mu)+\sum_\Theta \sum_{i_1<\dots<i_\Theta<i_{\Theta+1} \not=L} I_{i_1\dots i_\Theta L}( \mu),
\end{align*}
where the first term in the last line is a sum to be taken over all the indices $i_1,\dots, i_N$ corresponding to all possible isotropy branches of the form $(H_{i_1},\dots,  (H_{i_{N}}), (H_{i_{N+1}})=(H_L))$ of varying length $N$, while the second term is a sum over all indices $i_1, \dots, i_\Theta$ corresponding to branches  $(H_{i_1},\dots, (H_{i_{\Theta}}), (H_{i_{\Theta+1}})\not=(H_L))$ of arbitrary length $\Theta$. The asymptotic behavior of the integrals $I_{i_1\dots i_N}(\mu)$ has been determined in the previous section, and it is not difficult to see that the integrals $I_{i_1\dots i_\Theta L}$ have analogous asymptotic descriptions. We are now ready to state and prove the main result of this paper.
\begin{theorem}
Let $M$ be a connected, closed Riemannian manifold, and $G$  a compact, connected Lie group $G$ with Lie algebra $\g$ acting isometrically and effectively on $M$. Consider the oscillatory integral
\bqn
I(\mu)=   \int _{T^\ast M}  \int_{\g} e^{i \psi( \eta,X)/\mu }   a(\eta,X)  \d X \d\eta ,   \qquad \mu >0,
\eqn 
where the phase function 
\bqn 
\psi(\eta,X)=\mathbb{J}(\eta)(X)
\eqn
is given by the moment map $\mathbb{J}:T^\ast M \rightarrow \g^\ast$ corresponding to  the Hamiltonian action on $T^\ast M$, and $d\eta$ is a density on $T^\ast M$, while $dX$ is, up to a constant factor, the Lebesgue measure on $\g$, and $a \in \CT( T^\ast M \times \g)$.   Then $I(\mu)$ has the asymptotic expansion 
\bqn 
I(\mu) = (2\pi\mu)^\kappa L_0 + O(\mu^{\kappa+1}),  \qquad \mu \to 0^+.
\eqn
Here $\kappa$ is the dimension of an orbit of principal type in $M$, and the leading coefficient is given by 
\bq
\label{eq:L0}
L_0=\int_{\mathrm{Reg}\, \mathcal{C}} \frac {a(\eta,X)}{|\mathrm{Hess}  \, \psi(\eta,X)_{N_{(\eta,X)}\mathrm{Reg}\, \mathcal{C}}|^{1/2}} \d(\mathrm{Reg}\, \mathcal{C})(\eta,X),
\eq
where $\mathrm{Reg}\, \mathcal{C}$ denotes the regular part of the critical set $\mathcal{C}=\Crit(\psi)$ of $\psi$, and $\d(\mathrm{Reg}\, \mathcal{C})$ the induced Riemannian measure. 
In particular, the integral over $\mathrm{Reg}\, \mathcal{C}$ exists.
\end{theorem}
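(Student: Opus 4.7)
The plan is to assemble the main asymptotic expansion from the pieces already produced in the preceding sections. Starting from the partition-of-unity decomposition of Section~8, write
\[
I(\mu) = \sum_N \sum_{i_1<\dots<i_N<i_{N+1}=L} I_{i_1\dots i_N}(\mu) + \sum_\Theta \sum_{i_1<\dots<i_\Theta<i_{\Theta+1}\neq L} \tilde I_{i_1\dots i_\Theta L}(\mu),
\]
where the first sum runs over all isotropy branches terminating in $(H_L)$, and the second runs over branches that terminate before reaching $(H_L)$. The argument at the end of Section~8, based on the non-stationary phase theorem applied in an $\alpha^{(i_\Theta)}$-chart together with the choice $\epsilon=\mu^{1/\Theta}$, already shows that each $\tilde I_{i_1\dots i_\Theta L}(\mu)=O(\mu^{\kappa+1})$, so only the first sum contributes to the leading order.

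Next, I would invoke the theorem of Section~7, which asserts
\[
I_{i_1\dots i_N}(\mu)=(2\pi\mu)^\kappa\, L_{i_1\dots i_N} + O(\mu^{\kappa+1}),
\]
with $L_{i_1\dots i_N}$ given by the integral in \eqref{eq:L} over $\Crit(\,^{(i_1\dots i_N)}\tilde\psi^{wk})$. Summing over all terminating branches therefore yields
\[
I(\mu) = (2\pi\mu)^\kappa \sum_N \sum_{i_1<\dots<i_N<L} L_{i_1\dots i_N} + O(\mu^{\kappa+1}),
\]
and the remaining task is to identify this total leading coefficient with the integral $L_0$ over $\Reg\,\mathcal{C}$.

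For this identification, I would exploit the fact that the composed transformation $\zeta=\zeta_{i_1}\circ\zeta_{i_1i_2}\circ\dots\circ\zeta_{i_1\dots i_N}\circ\delta_{i_1\dots i_N}$ (tensored with $\id_{fiber}$) is a diffeomorphism onto its image away from the locus $\sigma_{i_1}\cdots\sigma_{i_N}=0$, and that this exceptional locus is of measure zero. By equation \eqref{eq:CC} of the first fundamental theorem the critical manifold $\Crit(\,^{(i_1\dots i_N)}\tilde\psi^{wk})$ is precisely the closure of its open dense portion with $\sigma_{i_1}\cdots\sigma_{i_N}\neq 0$, and on that portion $\zeta\otimes\id_{fiber}$ carries the weak-transform critical set biholomorphically onto the part of $\Reg\,\mathcal{C}$ consisting of points $(\eta,X)$ with $\pi(\eta)$ in the stratum associated with the branch $((H_{i_1}),\dots,(H_{i_{N+1}})=(H_L))$ (compare \eqref{eq:XX}, \eqref{eq:IVb}, and Lemma~\ref{lemma:Reg}). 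Together with the measure identity $dX\,d\eta\equiv\tilde\Phi_{i_1\dots i_N}\,d(T^\ast W_{i_1})(\eta)\,\bigwedge_j dA^{(i_j)}\,dB^{(i_N)}\,d\tilde v^{(i_N)}\,\bigwedge_l dx^{(i_l)}\,d\tau$, the formula \eqref{eq:Hess} comparing Hessians in different coordinate systems, and the fact that $\Phi_{i_1\dots i_N}$ and the transversal Hessian factor change consistently under pullback, a change-of-variables argument converts the sum $\sum L_{i_1\dots i_N}$, after telescoping through the branches via the partitions of unity $\{\chi_{i_1}\},\{\chi_{i_1i_2}\},\dots$, into the single integral
\[
L_0 = \int_{\Reg\,\mathcal{C}} \frac{a(\eta,X)}{|\mathrm{Hess}\,\psi(\eta,X)_{|N_{(\eta,X)}\Reg\,\mathcal{C}}|^{1/2}}\,d(\Reg\,\mathcal{C})(\eta,X).
\]
The main obstacle will be the last step: carefully tracking how the Jacobians $\tilde\Phi_{i_1\dots i_N}$ and the transversal Hessian factors transform under the composed monoidal transformation and substitution $\delta_{i_1\dots i_N}$, and verifying that, on the dense diffeomorphic part, the densities and normal Hessians combine to reproduce exactly the intrinsic Riemannian density and transversal Hessian on $\Reg\,\mathcal{C}$, independently of which branch's coordinates one uses. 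Absolute convergence of the resulting integral over $\Reg\,\mathcal{C}$ follows a posteriori from the uniform bounds established in Theorem~\ref{thm:J} and the fact that each $L_{i_1\dots i_N}$ is already finite.
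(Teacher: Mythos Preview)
Your assembly of the asymptotic expansion from the branch integrals $I_{i_1\dots i_N}(\mu)$ and the disposal of the $\alpha$-chart contributions via non-stationary phase is the same as in the paper. Where you diverge is in the identification of the sum $\sum L_{i_1\dots i_N}$ with the intrinsic integral $L_0$ over $\Reg\,\mathcal{C}$.

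You propose a direct change-of-variables: use that $\zeta\otimes\id_{fiber}$ is a diffeomorphism off the measure-zero exceptional locus, track how $\tilde\Phi_{i_1\dots i_N}$, the induced measure $d\Crit(\,^{(i_1\dots i_N)}\tilde\psi^{wk})$, and the transversal Hessian factor transform, and then undo the partitions of unity. The paper instead avoids this bookkeeping entirely by an indirect argument. It introduces cut-off functions $u_\epsilon$ supported in a neighbourhood of $\Sing\,\Omega$, sets $I_\epsilon(\mu)=\int e^{i\psi/\mu}a(1-u_\epsilon)\,dX\,d\eta$, and observes that $I_\epsilon(\mu)$ admits \emph{two} asymptotic expansions to leading order: one coming from a direct application of the stationary phase theorem (valid because $a(1-u_\epsilon)$ is supported where $\mathcal{C}$ is smooth), giving the leading coefficient as $\int_{\Reg\,\mathcal{C}}a(1-u_\epsilon)/|\mathrm{Hess}\,\psi|^{1/2}\,d(\Reg\,\mathcal{C})$; and one coming from the desingularization machinery, giving the coefficient as a sum of the $L_{i_1\dots i_N}$-type integrals with $a$ replaced by $a(1-u_\epsilon)$. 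Uniqueness of the leading coefficient forces these to agree for every $\epsilon>0$, and then Fatou (applied with a non-negative test amplitude to secure absolute integrability on $\Reg\,\mathcal{C}$) followed by dominated convergence as $\epsilon\to 0$ yields \eqref{eq:L0}.

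Your route is not wrong in principle, but the obstacle you flag---matching Jacobians, induced densities, and transversal Hessians under the composed blow-up and the substitution $\delta_{i_1\dots i_N}$---is a genuine computation that the paper never carries out and never needs to. The paper's trick of comparing two expansions of the \emph{same} truncated integral $I_\epsilon(\mu)$ makes the identification automatic and simultaneously delivers the convergence statement via Fatou, whereas in your approach convergence of the integral over $\Reg\,\mathcal{C}$ is only established after (and contingent on) the change-of-variables identity.
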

\begin{remark}
Note that equation \eqref{eq:L0} in particular means that the obtained asymptotic expansion for $I(\mu)$ is independent of the explicit partial resolution we used.
\end{remark}
\begin{proof}
By the considerations at the beginning of this section, one has 
\bqn 
I(\mu) = (2\pi\mu)^\kappa L_0 + O(\mu^{\kappa+1}),  \qquad \mu \to 0^+,
\eqn
where $L_0$ is given as a sum of integrals of the form \eqref{eq:L}. It therefore remains to show the equality \eqref{eq:L0}. For this, let us remark that since $M$ is compact, $T^\ast M$ is a paracompact manifold, admitting a Riemannian metric, so that $T^\ast M$ is  a metric space with metric $|\cdot|$.  We  introduce now certain cut-off functions for the singular part $\Sing \Omega$ of $\Omega$. Let $K$ be a compact subset in $T^\ast M$, $\epsilon >0$, and consider the set
 \bqn
 (\Sing \Omega \cap K)_{\epsilon}=\mklm{\eta \in T^\ast M : |\eta-\eta'| < \epsilon \text{ for some } \eta' \in \Sing \Omega}.
 \eqn
By using a partition of unity, one can  show the existence of a  test function $u_\epsilon \in \CT((\Sing \Omega \cap K)_{3\epsilon})$ satisfying $u_\epsilon =1$ on $(\Sing \Omega \cap K)_\epsilon$,  see H\"{o}rmander \cite{hoermanderI},  Theorem 1.4.1. We then have the following
\begin{lemma}\label{lemlim}
Let $a \in \CT(T^\ast M \times \g)$, $K$ be a compact subset in $T^\ast M$ such that $\supp_\eta a \subset K$, and $u_\epsilon$ as above. Then the  limit 
\bq
\label{eq:MC}
\lim_{\eps \to 0}  \int_{\Reg \mathcal{C}}\frac{ [a (1-u_\eps)] (\eta,X)}{|\det  \, \psi'' (\eta,X)_{|N_{(\eta,X)}\Reg \mathcal{C}} |^{1/2}} d(\Reg \mathcal{C})(\eta,X)
\eq
exists and is equal to $L_0$, where $d(\Reg {\mathcal{C}})$ is the induced Riemannian measure on $\Reg {\mathcal{C}}$.
\end{lemma}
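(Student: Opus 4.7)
The plan is to exploit the fact that on the complement of the singular locus, the sequence of monoidal transformations constructed in the desingularization process is a diffeomorphism, and therefore provides a bona fide change of variables that identifies the integrand on $\Reg\mathcal{C}$ with the integrands appearing in \eqref{eq:L}. First, I would decompose both sides of the asserted equality using the same nested partitions of unity $\{\chi_k\}$, $\{\chi_{ki_j}\}$, \dots employed throughout the desingularization, so that each summand is supported in a single chart associated to an isotropy branch $((H_{i_1}),\dots,(H_{i_{N+1}})=(H_L))$; both sides then become finite sums indexed by the branches that occur (both the full branches terminating at $(H_L)$ and the truncated branches of the type treated in the analysis of $\tilde I_{i_1\ldots i_\Theta}(\mu)$, the contribution of which is $O(\mu^{\kappa+1})$ and produces no contribution to $L_0$).

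For a fixed full branch, the composite map $\zeta=\zeta_{i_1}\circ\zeta_{i_1 i_2}\circ\cdots\circ\zeta_{i_1\dots i_N}\circ\delta_{i_1\dots i_N}$, tensored with the identity on the fibres of $T^\ast M$, restricts to a diffeomorphism on the open dense set $\{\sigma_{i_1}\cdots\sigma_{i_N}\neq 0\}$ onto its image in $W_{i_1}\times\g$. By construction the image lies in the locus where the orbit of $m^{(i_1\dots i_N)}$ is of principal type $G/H_L$, hence in $\Reg\mathcal{C}$; together with the analogous maps for all other branches it covers $\Reg\mathcal{C}$ up to a set of measure zero. Performing the change of variables on the corresponding piece of \eqref{eq:MC}, the Jacobian of $\zeta$ produces precisely the factor $\prod_{j=1}^N|\tau_{i_j}(\sigma)|^{c^{(i_j)}+\sum_{r\leq j}d^{(i_r)}-1}\,\Phi_{i_1\dots i_N}\,|\det D\delta_{i_1\dots i_N}|$ appearing in \eqref{eq:N}; meanwhile, the factorization $\,^{(i_1\dots i_N)}\tilde\psi^{tot}=\tau_{i_1}\cdots\tau_{i_N}\cdot\,^{(i_1\dots i_N)}\tilde\psi^{wk}$, combined with \eqref{eq:Hess} applied to the diffeomorphism $\zeta$, implies that the transversal Hessian of $\psi$ on $\Reg\mathcal{C}$ pulls back on the locus $\sigma_{i_1}\cdots\sigma_{i_N}\neq 0$ to $(\tau_{i_1}\cdots\tau_{i_N})^\kappa$ times the transversal Hessian of $\,^{(i_1\dots i_N)}\tilde\psi^{wk}$ on its critical manifold (using $\mathrm{codim}\,\Crit=2\kappa$ and equation \eqref{eq:kappa}). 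The $|\tau_{i_j}|$-factors in the Jacobian thus cancel exactly against those arising from the square root of the transversal Hessian, and the branch contribution to the integral in \eqref{eq:MC} becomes
\begin{equation*}
\int_{\Crit(\,^{(i_1\dots i_N)}\tilde\psi^{wk})\cap\{\sigma_{i_1}\cdots\sigma_{i_N}\neq 0\}}(1-u_\eps\circ\zeta)\,\frac{a_{i_1\dots i_N}\,\Phi_{i_1\dots i_N}}{|\mathrm{Hess}(\,^{(i_1\dots i_N)}\tilde\psi^{wk})_{N\Crit}|^{1/2}}\,d\Crit(\,^{(i_1\dots i_N)}\tilde\psi^{wk}).
\end{equation*}

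Here the second fundamental theorem is essential: it guarantees that the transversal Hessian is non-degenerate at every point of $\Crit(\,^{(i_1\dots i_N)}\tilde\psi^{wk})$, so that the integrand (without the cutoff) extends to a continuous, compactly supported function on this smooth submanifold and is therefore uniformly bounded and globally integrable. As $\eps\to 0^+$ one has $1-u_\eps\circ\zeta\to 1$ pointwise off the closed measure-zero set $\zeta^{-1}(\Sing\Omega)\cap\Crit$; since the set $\{\sigma_{i_1}\cdots\sigma_{i_N}=0\}$ itself has measure zero in $\Crit(\,^{(i_1\dots i_N)}\tilde\psi^{wk})$, Lebesgue's dominated convergence theorem yields that the branch contribution converges to $L_{i_1\dots i_N}$ as defined in \eqref{eq:L}. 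Summing over all branches gives $\lim_{\eps\to 0}\text{(LHS of \eqref{eq:MC})}=\sum_{i_1<\cdots<i_N<L}L_{i_1\dots i_N}=L_0$, proving both the existence of the limit and the claimed identity.

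The main obstacle I expect is the transformation-law bookkeeping in the second paragraph: one must check carefully that the pullback of the induced Riemannian measure $d(\Reg\mathcal{C})$ together with the Jacobian of $\zeta$ reproduces exactly $\tilde\Phi_{i_1\dots i_N}\,d\Crit(\,^{(i_1\dots i_N)}\tilde\psi^{wk})\,d\sigma$, and that the power $(\tau_{i_1}\cdots\tau_{i_N})^\kappa$ from the transversal Hessian absorbs precisely the leftover $|\tau_{i_j}|$ factors dictated by Lemma \ref{lemma:kappa}. Once this matching is verified, the convergence argument is routine, and no further analytic input beyond the two fundamental theorems is needed.
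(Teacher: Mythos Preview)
Your approach is correct in spirit but takes a genuinely different route from the paper. The paper's proof is an indirect two-line argument: define $I_\eps(\mu)=\int e^{i\psi/\mu}[a(1-u_\eps)]\,dX\,d\eta$, and compute its leading asymptotic coefficient in two ways. First, since $1-u_\eps$ vanishes near $\Sing\Omega$, the critical set of $\psi$ is clean on the support of $a(1-u_\eps)$, so the ordinary stationary phase theorem applies directly and gives $(2\pi\mu)^\kappa L_0(\eps)$ with $L_0(\eps)$ exactly the integral in \eqref{eq:MC}. Second, running the entire desingularization machinery on $I_\eps(\mu)$ instead of $I(\mu)$ yields $(2\pi\mu)^\kappa$ times a sum of integrals of the form \eqref{eq:L} with $a$ replaced by $a(1-u_\eps)$. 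By uniqueness of the leading coefficient in an asymptotic expansion, these two expressions coincide for every $\eps>0$; then dominated convergence (applied in the resolved picture, where the integrand is manifestly bounded) gives the limit $L_0$.

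What this buys is precisely the avoidance of the bookkeeping you flag as your main obstacle: you never have to match the pullback of $d(\Reg\mathcal{C})$ and of the transversal Hessian against $\tilde\Phi_{i_1\dots i_N}\,d\Crit$ by hand, because the equality of the two $Q_0$'s is forced abstractly. Your direct change-of-variables argument can be made to work, but note that the ``exact cancellation'' you assert is not literally correct: the Jacobian contributes $\prod_j|\tau_{i_j}|^{c^{(i_j)}+\sum_{r\le j}d^{(i_r)}-1}$ while the factorization $\psi^{tot}=\tau\cdot\psi^{wk}$ contributes $|\tau_{i_1}\cdots\tau_{i_N}|^{\kappa}$ to $|\det\mathrm{Hess}|^{1/2}$, leaving a residual $\prod_j|\tau_{i_j}|^{c^{(i_j)}+\sum_{r\le j}d^{(i_r)}-1-\kappa}$ with exponents that are only $\ge 0$ by Lemma~\ref{lemma:kappa}, not $=0$. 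This does not break your argument (the residual factor is bounded on $|\tau_{i_j}|<1$, so dominated convergence still applies), but it means the displayed integrand you wrote does not quite match \eqref{eq:L} term by term. The cleanest fix is to invoke diffeomorphism invariance of the stationary-phase leading density $Q_0$ directly rather than tracking the individual factors; or, simpler still, to adopt the paper's uniqueness trick.
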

\begin{proof}
We define
\bqn
I_\eps(\mu)=\int_{T^\ast M} \int_{\g}  e^{\frac{i}{\mu} \psi(\eta,X) }  [a(1-u_\eps)](\eta,X) \, dX \, d\xi \, dx.
\eqn
Since $(\eta,X) \in \Sing {\mathcal{C}}$ implies $ \eta \in \Sing \Omega$, a direct application of the generalized stationary phase theorem for fixed $\eps >0$ gives
\bq
\label{eq:asympt}
| I_\eps(\mu)- (2\pi \mu)^\kappa L_0(\eps) | \leq C_\eps \mu^{\kappa+1},
\eq
where $C_\eps>0$ is a constant depending only on $\eps$, and
\bqn
L_0(\eps)=  \int_{\Reg {\mathcal{C}}}\frac{ [a(1-u_\eps)] (\eta,X)}{|\det  \, \psi'' (\eta,X)_{|N_{(\eta,X)}\Reg {\mathcal{C}}} |^{1/2}} d(\Reg {\mathcal{C}})(\eta,X).
\eqn
On the other hand, applying our previous considerations to  $I_\eps(\mu)$ instead of $I(\mu)$, we obtain again an asymptotic expansion of the form \eqref{eq:asympt} for $I_\eps(\mu)$, where now the first coefficient is given by a sum of integrals of the form \eqref{eq:L} with $a$ replaced by $a(1-u_\eps)$. Since the first term in the asymptotic expansion \eqref{eq:asympt} is uniquely determined, the two expressions for $L_0(\eps)$ must be identical. The statement of the lemma now follows by  the Lebesgue theorem on bounded convergence.
\end{proof}
Note that existence of the limit in  \eqref{eq:MC} has been established by  partially resolving the  singularities of the critical set $\mathcal{C}$, the corresponding limit being given by $L_0$. Let now $a ^+ \in \CT(T^\ast M\times \g), \R^+)$. Since one can assume that  $|u_\epsilon| \leq 1$, the lemma of Fatou implies that 
\bqn 
  \int_{\Reg \mathcal{C}} \lim_{\eps \to 0}  \frac{ [a^+ (1-u_\eps)] (\eta,X)}{|\det  \, \psi'' (\eta,X)_{|N_{(\eta,X)}\Reg \mathcal{C}} |^{1/2}} d(\Reg \mathcal{C})(\eta,X)
\eqn
is mayorized by the limit  \eqref{eq:MC}, with $a$ replaced by $a^+$. Lemma \ref{lemlim} then implies that
\bqn 
  \int_{\Reg \mathcal{C}} \frac{ a^+ (\eta,X)}{|\det  \, \psi'' (\eta,X)_{|N_{(\eta,X)}\Reg \mathcal{C}} |^{1/2}} d(\Reg \mathcal{C})(\eta,X) < \infty.
\eqn
Choosing now $a^+$ to be equal  $1$ on a neighborhood of the support of $a$, and applying the theorem of Lebesgue on bounded convergence to the limit \eqref{eq:MC}, we obtain equation \eqref{eq:L0}. 
\end{proof}

\providecommand{\bysame}{\leavevmode\hbox to3em{\hrulefill}\thinspace}
\providecommand{\MR}{\relax\ifhmode\unskip\space\fi MR }
\providecommand{\MRhref}[2]{%
  \href{http://www.ams.org/mathscinet-getitem?mr=#1}{#2}
}
\providecommand{\href}[2]{#2}


\end{document}